\def\pwidth{48mm}
\definecolor{darkgreen}{rgb}{0,0.4,0}
\definecolor{BrickRed}{rgb}{0.65,0.08,0}
\numberwithin{equation}{section}
\newtheorem{theorem}{Theorem}[section]
\newtheorem{lemma}[theorem]{Lemma}
\newtheorem{proposition}[theorem]{Proposition}
\newtheorem{conjecture}[theorem]{Conjecture}
\theoremstyle{example}
\newtheorem{example}[theorem]{Example}
\theoremstyle{definition} 
\newtheorem{definition}[theorem]{Definition}
\theoremstyle{remark}
\newtheorem*{remark}{Remark} 
\newcommand{\walksym}{\omega}
\newcommand{\walk}[1]{\walksym_{#1}}
\def\N{\mathbb{N}}
\def\Z{\mathbb{Z}}
\def\S{\mathcal{S}}
\def\A{\mathcal{A}}
\def\T{{\mathcal T}}
\def\boxproduct{\hspace{1mm}{}^\square\hspace{-1mm}\times}
\newcommand{\oeis}[1]{\text{\href{https://oeis.org/#1}{{\small \tt #1}}}} 
\newcommand{\OEIS}[1]{\text{\href{https://oeis.org/#1}{{\small \tt (#1)}}}} 
\begin{document}
\title[Formulas for enumeration of lattice paths: the kernel method] 
{Explicit formulas for enumeration of lattice paths: basketball and the kernel method} 

\subjclass[2010]{Primary 05A15; Secondary 05A10 05A16 05A19}

\keywords{lattice paths, Dyck paths, Motzkin paths, kernel method, analytic combinatorics, computer algebra, generating function, singularity analysis, Lagrange inversion, context-free grammars, $\N$-algebraic function}

\author[Banderier, Krattenthaler, Krinik, Kruchinin, Kruchinin, Nguyen, Wallner]{Cyril Banderier \and Christian Krattenthaler \and Alan Krinik \and Dmitry Kruchinin \and Vladimir Kruchinin \and David 
Nguyen \and Michael Wallner}

\dedicatory{
\begin{tabular}{cc}
\begin{tabular}{c}\includegraphics[height=3cm]{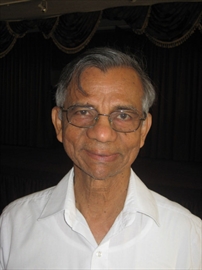}\end{tabular} & \begin{tabular}{l} Dedicated to Sri Gopal Mohanty,\\ a pioneer in the field of lattice paths combinatorics,\\ on the occasion of his 84th birthday.\end{tabular}
\end{tabular}}

\newcount\m \newcount\n
\def\hours{\n=\time \divide\n 60
	\m=-\n \multiply\m 60 \advance\m \time
	\twodigits\n:\twodigits\m}
\def\twodigits#1{\ifnum #1<10 0\fi \number#1}
\date{\today}

\address{\rm Cyril Banderier, CNRS/Univ. Paris 13, Villetaneuse (France).\protect\newline
	{\href{http://lipn.fr/~banderier/}{\tt http://lipn.fr/$\sim$banderier}}
}

\address{\leavevmode\vskip-5mm
\rm Christian Krattenthaler, Fakult\"{a}t f\"{u}r Mathematik, Universit\"{a}t Wien (Austria). \protect\newline
{\href{http://www.mat.univie.ac.at/~kratt/}{\tt http://www.mat.univie.ac.at/$\sim$kratt}}
}

\address{\leavevmode\vskip-5mm
\rm Alan Krinik, Department of Mathematics and Statistics, Cal Poly Pomona (USA). 
}

\address{\leavevmode\vskip-5mm
\rm Dmitry Kruchinin, Tomsk State University of Control Systems and Radio Electronics (Russia). 
}

\address{\leavevmode\vskip-5mm
\rm Vladimir Kruchinin, Tomsk State University of Control Systems and Radio Electronics (Russia). 
}

\address{\leavevmode\vskip-5mm
\rm David Nguyen, Department of Mathematics, University of California, Santa Barbara (USA). 
\protect\newline
	{\href{http://www.math.ucsb.edu/~dnguyen/}{\tt http://www.math.ucsb.edu/$\sim$dnguyen}}
}

\address{\leavevmode\vskip-5mm
\rm Michael Wallner, Institute of Discrete Mathematics and Geometry, TU Wien (Austria). \protect\newline
	{\href{http://dmg.tuwien.ac.at/mwallner/}{\tt http://dmg.tuwien.ac.at/mwallner}}
}

\begin{abstract}
This article deals with the enumeration of 
directed lattice walks on the integers with any finite set of steps,
starting at a given altitude $j$ and ending at a given altitude $k$,
with additional constraints such as, for example, to never attain
altitude $0$ in-between.
We first discuss the case of walks on the integers with steps 
$-h, \dots, -1, +1, \dots, +h$.
The case $h=1$ is equivalent to the classical Dyck paths, 
for which many ways of getting explicit formulas involving 
Catalan-like numbers are known. The case $h=2$ corresponds 
to ``basketball'' walks, which we treat in full detail.
Then we move on to the more general case of walks with any finite set 
of steps, also allowing some weights/probabilities associated with each step. 
We show how a method of wide applicability, the so-called ``kernel method'',
leads to explicit formulas for the number of walks of length $n$, 
for any $h$, in terms of nested sums of binomials. 
We finally relate some special cases to other combinatorial problems, 
or to problems arising in queuing theory.
\end{abstract}
\maketitle  

\pagebreak
\tableofcontents

\vspace{-13mm}
\section{Introduction}\label{Section1}

While analysing permutations sortable by a stack, 
Knuth~\cite[Ex.~1--4 in Sec.~2.2.1]{knuth} 
showed they were counted by Catalan numbers,
and were therefore in bijection with Dyck paths (lattice paths with
steps $(1,1)$ and $(1,-1)$ in the plane integer lattice, from the
origin to some point on the $x$-axis, and never running below the
$x$-axis in-between). He used 
a method to derive the corresponding generating function
(see \cite[p.~536ff]{knuth}) which Flajolet coined
{\it``kernel method''}. That name stuck among combinatorialists, although
the method already existed in the folklore of statistics and statistical physics 
--- without a name.  
The method was later generalized to enumeration and asymptotic analysis of
directed lattice paths with any set of steps, 
and many other combinatorial structures enumerated by bivariate or trivariate functional equations
(see, e.g., \cite{Fayolle99, bope00, BaBoDeGaGo02, BaFl02, BoJe06, Eynard16}).
We refer to the introduction of~\cite{BaWa16} for a more detailed history of the kernel method.

The emphasis in \cite{BaFl02} is on asymptotic analysis, for which the
derived (exact) enumeration results serve as a starting point. The latter are in a
sense {\it implicit}, since they involve solutions to certain
algebraic equations. They are nevertheless perfect for carrying out
singularity analysis, which in the end leads to very precise 
asymptotic results. 

In general, it is not possible to simplify the exact enumeration results 
from~\cite{BaFl02}. 
However, for models involving special choices of step sets, this is possible.
These potential simplifications are the main focus of our paper.

\begin{figure}[hb]
{\includegraphics[width=109mm]{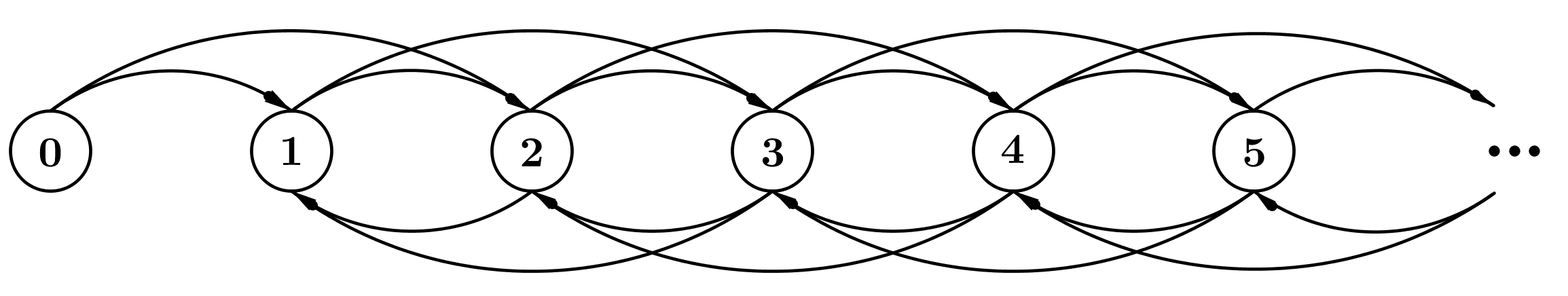}} 
\caption{A queue corresponding to the basketball walk model.}
\label{queue}
\end{figure}

Such models appear frequently in queuing theory. 
Indeed, birth and death processes and queues, like the one 
shown in Figure~\ref{queue}, are naturally encoded by lattice paths 
(see~\cite{Bohm10,Harris08,KrMo10,KrinikShun11,Margolius17,Mohanty07}).  
In this article, we solve a problem raised during the 2015 International Conference 
on ``Lattice Path Combinatorics and Its Applications'':
to find closed-form formulas for the number of walks of length $n$
from~$0$ to $k$ for a full family of models similar to
Figure~\ref{queue}. 
As it turns out, the essential tool to achieve this goal is indeed the kernel method.

\medskip
Our paper is organized as follows.
We begin with some preliminaries in Section~\ref{Section2}.
In particular, we introduce the directed lattice paths that we are
going to discuss here, we provide a first glimpse of the kernel
method, and we briefly review the Lagrange--B\"urmann inversion
formula for the computation of the coefficients of implicitly defined
power series. Section~\ref{Section3} is devoted to (old-time)
``basketball walks", which, by definition, are directed lattice walks 
with steps from the set $\{(1,-2),(1,-1),(1,1),(1,2)\}$ which always
stay above the $x$-axis.
(They may be seen as the evolution of --- pre~1984 --- basketball
games; see the beginning of that section for a more detailed 
explanation of the
terminology.) We provide exact formulas (often several, not obviously
equivalent ones) for generating functions
and for the numbers of walks under various constraints.
At the end of Section~\ref{Section3}, we also briefly address the
asymptotic analysis of the number of these
walks. Section~\ref{Section4} then considers the more general problem
of enumerating directed walks where the allowed steps are of the
form $(1,i)$ with $-h\le i\le h$ (including $i=0$ or not).
Again, we provide exact formulas for generating functions ---
in terms of roots of the so-called kernel equation --- and for numbers
of walks --- in terms of nested sums of binomials. All these results
are obtained by appropriate combinations of the kernel method with
variants of the Lagrange--B\"urmann inversion formula. 
In the concluding Section~\ref{Section5},
we relate basketball walks with other combinatorial objects, namely
\begin{itemize}
\item with certain trees coming from option pricing,
\item with increasing unary-binary trees which avoid a certain pattern
which arose in work of Riehl~\cite{Riehl16},
\item and with certain Boolean bracketings which appeared in work of
Bender and Williamson~\cite{bender91}.
\end{itemize}


\section{The general setup, and some preliminaries}\label{Section2}

In this section, we describe the general setup that we consider in
this article. 
We use (subclasses of) so-called {\it \L ukasiewicz paths}
as main example(s) which serve to illustrate this setup.
We recall here as well the main tools that we shall use in this
article: the {\it kernel method\/} and 
the {\it Lagrange--B\"urmann inversion formula}.

We start with the definition of the lattice paths under consideration.
 \begin{definition} \label{def:LP}
 A {\it step set} $\S \subset \Z^2$ is a finite set of vectors 
$$\{ (x_1,y_1), (x_2,y_2), \ldots, (x_m,y_m)\}.$$
An $n$-step \emph{lattice path} or \emph{walk} is a sequence of vectors $v = (v_1,v_2,\ldots,v_n)$, such that $v_j$ is in $\S$. 
Geometrically, it may be interpreted as 
a sequence of points $\walksym =(\walk{0},\walk{1},\ldots,\walk{n})$, where $\walk{i} \in \Z^2, \walk{0} = (0,0)$ (or another starting point),
and $\walk{i}-\walk{i-1} = v_i$ for $i=1,2,\ldots,n$.
The elements of $\S$ are called \emph{steps}. 
The \emph{length} $|\walksym|$ of a lattice path is its number $n$ of steps. 
 \end{definition}
The lattice paths can have different additional constraints shown in Table~\ref{fig:4types}.
\vspace{-1mm}
\begin{table*}[ht]
 \small
 \begin{center}\renewcommand{\tabcolsep}{3pt}
 \begin{tabular}{|c|c|c|}
 \hline
 & ending anywhere & ending at 0\\
 \hline  
 \begin{tabular}{c} unconstrained \\ (on~$\Z$) \end{tabular}
 & \begin{tabular}{c} 
 {\includegraphics[width=\pwidth]{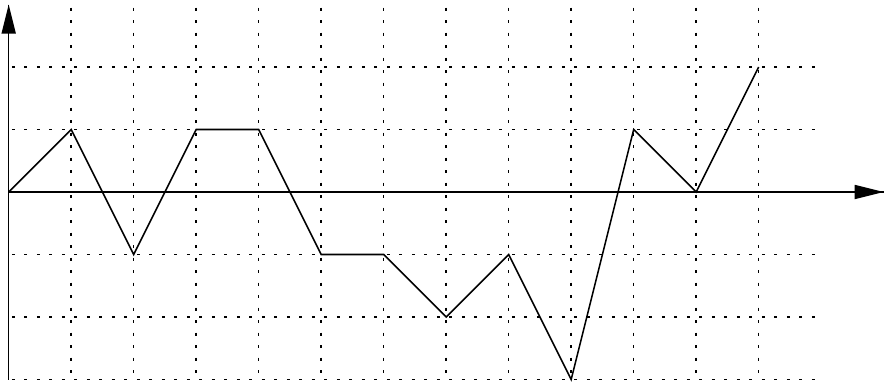}} 
\\ 
 walk/path ($\mathcal W$) 
 \end{tabular}
 & \begin{tabular}{c} 

 {\includegraphics[width=\pwidth]{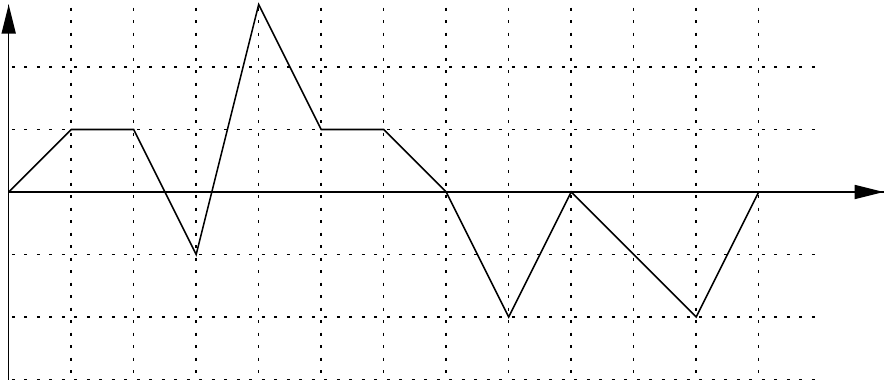}} 
\\
 bridge ($\mathcal B$)
 \end{tabular} \\
 \hline
 \begin{tabular}{c}constrained\\ (on $\N$) \end{tabular}
 & \begin{tabular}{c} 
 \includegraphics[width=\pwidth]{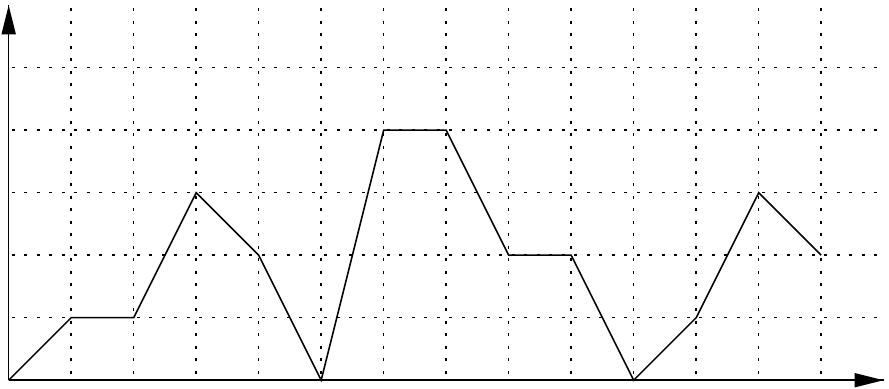} 
\\ 
 meander ($\mathcal M$)\\ 
 \end{tabular}
 & \begin{tabular}{c} 
 {\includegraphics[width=\pwidth]{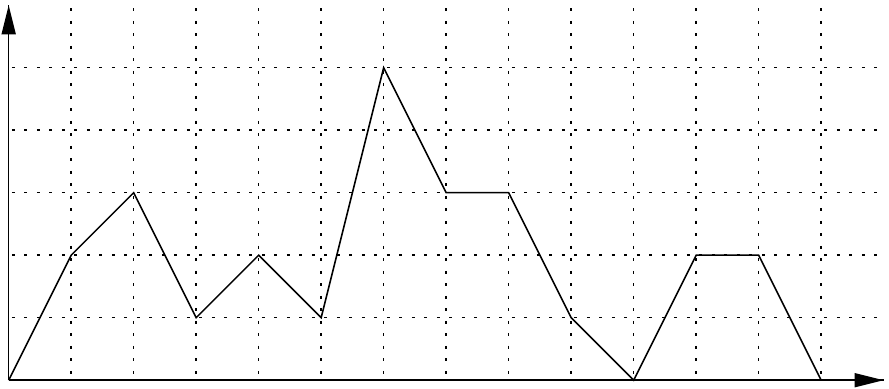}} 
\\ 
 excursion ($\mathcal E$)\\ 
 \end{tabular}\\
 \hline
 \end{tabular}
 \end{center}
\vskip2pt
 \caption{\label{fig:4types} 
The four types of walks: unconstrained walks, bridges, meanders, 
and excursions.}
 \end{table*}

\vspace{-7mm}
We restrict our attention to \emph{directed paths}, which are defined by the fact that, for each step $(x,y) \in \S$, one has $x \geq 0$. 
Moreover, we will focus only on the subclass of \emph{simple paths}, where every element in the step set $\S$ is of the form $(1,b)$. 
In other words, these paths constantly move one step to the right.
Thus, they are essentially one-dimensional objects and can be seen as
walks on the integers. 
We introduce the abbreviation $\S = \{ b_1,b_2, \ldots, b_n \}$ in this case. 
A \emph{{\L}ukasiewicz path} is a simple path where its associated step set $\S$ is a subset of $\{-1,0,1,\ldots\}$ and $-1 \in \S$.

\begin{example}[\sc Dyck paths]
	\label{ex:dyck}
	A Dyck path is a path constructed from the step set $\S = \{-1,+1\}$, which starts at the origin, never passes below the $x$-axis, and ends on the $x$-axis. In other words, Dyck paths are excursions 
with step set $\S=\{-1,+1\}$.
\end{example}

The next definition allows to merge the probabilistic point of view ({\it random walks}) and the combinatorial point of view ({\it lattice paths}).

 \begin{definition}
 For a given step set $\S = \{s_1,s_2,\ldots,s_m\}$, we define the corresponding {\it system of weights}\index{lattice path!weights} 
as $ \{p_1,p_2,\ldots,p_m\}$, where $p_j >0$ is the weight associated with step $s_j$ for $j=1,2,\ldots,m$. 
The {\it weight of a path} is defined as the product of the weights of its individual steps. 
 \end{definition}


Next we introduce the algebraic structures associated with the previous definitions.
The \emph{step polynomial\/} of a given step set $\S$ is defined as the Laurent polynomial\footnote{By a {\it Laurent polynomial\/} in $u$ we mean a polynomial in $u$ and $u^{-1}$.}
\begin{align*}
	P(u) := \sum_{j=1}^m p_j u^{s_j}.
\end{align*}
Let 
\begin{equation}\label{c_d}
c = - \min_j s_j\quad  \text{and}\quad  d = \max_j s_j
\end{equation} 
be the two extreme step
sizes, and assume throughout that $c,d >0$. 
Note that for {\L}ukasiewicz paths we have $c=1$. 

We start with the easy case of unconstrained paths.
We define their bivariate generating function as 
$$W(z,u) = \sum_{n=0}^\infty \sum_{k=-\infty}^\infty W_{n,k} z^n u^k,$$ 
where $W_{n,k}$ is the number of unconstrained paths ending after $n$
steps at altitude $k$.  

It is well-known and straightforward to derive that
\begin{align}
	\label{eq:unconstrainedpaths}
	W(z,u) &= \frac{1}{1- zP(u)}.
\end{align}

We continue with the generating function of meanders:
\begin{align*}
	F(z,u):=\sum_{n=0}^\infty \sum_{k=0}^\infty F_{n,k}z^nu^k,
\end{align*}
where $F_{n,k}$ is the number of paths ending after $n$ steps at
altitude $k$, and constrained to be always at altitude $\geq 0$
in-between. 
Note that we are mainly interested in solving the counting problem,
i.e., determining the numbers $F_{n,k}$ for specific families of paths
(see Table~\ref{fig:4types}). The generating function encodes all
information we are interested in.  

We decompose $F(z,u)$ in two ways, namely
\begin{align*}
	F(z,u) = \sum_{k \geq 0} F_k(z) u^k = \sum_{n \geq 0} f_n(u) z^n.
\end{align*}
Here, the generating functions $F_k(z)$ enumerate paths ending at
altitude $k$, i.e., $F_k(z) = \sum_{n \geq 0} F_{n,k} z^n$. 
In particular, the
generating function for excursions is equal to $F_0(z)$. On the other
hand, the polynomials $f_n(u)$ enumerate paths of length $n$. The
power of $u$ encodes their final altitude.  
We will use this decomposition for a step-by-step approach, similar to
the one in the case of unconstrained paths. 

For the sake of illustration, we show below how the kernel method can
be used to find a closed form for the generating function of a
given class of {\L}ukasiewicz paths.
\begin{theorem}
	\label{theo:luka}
	Let $\S$ be the step set of a class of {\L}ukasiewicz paths, 
and let $P(u)$ be the associated step polynomial. Then the bivariate generating function of meanders (where $z$ marks length, and $u$ marks final altitude) and excursions are 
	\begin{align} \label{eqFzuLuka}
		F(z,u) &= \frac{1 - zF_0(z)/u}{1-zP(u)} & \text{and} &&
		F_0(z) &= \frac{u_1(z)}{z}, 
	\end{align}
respectively, where $u_1(z)$ is the unique small solution of the implicit equation
	\begin{align*}
		1 - zP(u) =0,
	\end{align*}
that is, the unique solution
	satisfying $\lim_{z \to 0} u_1(z) = 0$.
\end{theorem}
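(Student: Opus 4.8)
The plan is to proceed in two steps: first derive a linear functional equation relating the full meander series $F(z,u)$ to the unknown excursion series $F_0(z)$ via a ``one more step'' decomposition, and then determine $F_0(z)$ by the kernel method, i.e.\ by substituting into this equation the small root of the kernel $1-zP(u)$.

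First I would build meanders step by step. Every nonempty meander is obtained from a shorter meander by appending one step, and in terms of the altitude-marking generating function this amounts to multiplication by $P(u)$. This overcounts, since an appended step may push the path below the $x$-axis. Because the class is {\L}ukasiewicz we have $c=1$, so the smallest step is $-1$ and the only way to drop below $0$ is to take the step $-1$ from altitude $0$; the meanders ending at altitude $0$ are exactly the excursions, enumerated by $F_0(z)$. Hence this single forbidden transition contributes a term $zu^{-1}F_0(z)$ (the weight of the step $-1$ being normalized to $1$). Recording the empty meander as the constant $1$, I obtain
\begin{equation*}
  F(z,u) = 1 + zP(u)\,F(z,u) - \frac{z}{u}\,F_0(z),
\end{equation*}
and solving for $F(z,u)$ gives the first claimed formula $F(z,u) = \bigl(1 - zF_0(z)/u\bigr)\big/\bigl(1 - zP(u)\bigr)$. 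This is still implicit, as it involves the unknown $F_0(z)$.

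The second step is the crux, and it is where the kernel enters. Rewrite the functional equation as $\bigl(1-zP(u)\bigr)F(z,u) = 1 - zF_0(z)/u$, and note that $F(z,u)=\sum_{k\ge 0}F_k(z)u^k$ contains only nonnegative powers of $u$. Clearing denominators, the kernel equation $1-zP(u)=0$ becomes $u = zuP(u)$, a genuine polynomial equation in $u$ of degree $d+1$ (indeed $uP(u)$ is a polynomial because every step is $\ge -1$). A standard analysis of its roots --- for instance via the Newton polygon --- shows that exactly one of them tends to $0$ as $z\to 0$, namely the small root $u_1(z)=z+O(z^2)$, while the other $d$ roots escape to infinity. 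Since $u_1(z)$ has no constant term and $F$ has only nonnegative $u$-powers, the composition $F(z,u_1(z))$ is a well-defined formal power series in $z$, so $u=u_1(z)$ may legitimately be substituted into the identity. This substitution annihilates the left-hand side and leaves $0 = 1 - zF_0(z)/u_1(z)$, whence $F_0(z)=u_1(z)/z$, as claimed.

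The main obstacle is the justification of this final substitution. One must establish that a \emph{unique} small root $u_1(z)$ exists --- which is guaranteed precisely by $c=1$, the defining feature of {\L}ukasiewicz paths (for general $c$ there would be $c$ small roots and correspondingly $c$ unknown boundary series $F_0,\dots,F_{c-1}$ to be pinned down) --- and that substituting a power series vanishing at $z=0$ into $F(z,u)$ is legitimate. The latter is exactly why the kernel had to be cleared of its $u^{-1}$ term and why it matters that $F$ has no negative powers of $u$: this guarantees that $F(z,u_1(z))$ makes sense as a formal power series, so that the vanishing of the kernel factor is faithfully transmitted to the right-hand side. The remaining manipulations are routine.
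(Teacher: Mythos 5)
Your proof is correct and follows essentially the same route as the paper: the same ``append one step and correct for the forbidden $-1$ from altitude $0$'' decomposition yielding the functional equation $F(z,u)=1+zP(u)F(z,u)-\tfrac{z}{u}F_0(z)$, followed by substitution of the unique small root into the kernel. Your explicit justification that $F(z,u_1(z))$ is a well-defined formal power series (because $u_1(0)=0$ and $F$ has no negative powers of $u$) is a slightly more detailed version of the paper's remark about staying in the integral domain of Puiseux series, and is a welcome clarification rather than a deviation.
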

\begin{proof}
	A meander of length $n$ is either empty, or it is constructed
        from a meander of length $n-1$ by appending a possible step
        from $\S$. However, a meander is not allowed to pass below the
        $x$-axis, thus at altitude $0$ it is not allowed to use the
        step $-1$. This translates into the relations
	\begin{align*}
		f_0(u)  &= 1, &
		f_{n+1}(u) &= \{u^{\geq 0}\} \left( P(u) f_n(u) \right),
	\end{align*}
	where $\{u^{\geq 0}\}$ is the linear operator extracting all terms in the power series representation containing non-negative powers of $u$. Multiplying both sides of the above equation by $z^{n+1}$ and subsequently 
summing over all $n \geq 0$, we obtain the functional equation 
	\begin{align*}
		F(z,u) &= 1+ zP(u) F(z,u) - \frac{z}{u} F_0(z).
	\end{align*}
Equivalently,
	\begin{align}
		\label{eq:genFuncEq}
{\left(1-zP(u)\right)} F(z,u) &= 1 - \frac{z}{u} F_0(z)\,.
	\end{align} 
We write $K(z,u):=1-zP(u)$ and call this
factor $K(z,u)$ the \emph{kernel\/}. The above functional equation
looks like an underdetermined equation as there are two unknown functions, namely $F(z,u)$ and $F_0(z)$. However, the special structure on the left-hand side will resolve this problem and leads us to the \emph{kernel method}.
\smallskip

Using the theory of Newton polygons and Puiseux expansions
(cf.~\cite[Appendix of Sec.~3]{DieuIC}), 
	we know that the \emph{kernel equation}
		\begin{align}
			\label{eq:kerneleq}
			1-zP(u)=0,
		\end{align}
		has $d+1$ distinct solutions in $u$ (recall that $c=1$, see Equation~\eqref{c_d}).
One of them, say $u_1(z)$, maps $0$ to $0$. We call this solution
the ``small branch'' of the kernel equation. 
It is in modulus smaller than the other $d$ branches.
These in turn grow to infinity in modulus while $z$ approaches~0.
Consequently, we call the latter the ``large branches'' and denote them by
$v_1(z),v_2(z),\dots,v_d(z)$.
		Inserting the small branch into~\eqref{eq:genFuncEq} (this is legitimate as we stay in 
		the integral domain of Puiseux power series: substitution of the small branch always leads to series having a finite number of terms with negative exponents, even for intermediate computations), we get 
		$F_0(z) = u_1(z)/z$.		
This proves our second claim.
	Using this result, we can solve~\eqref{eq:genFuncEq} for $F(z,u)$ to get the first claim.	
\end{proof}

The formula~\eqref{eqFzuLuka} in the previous theorem implies that the number $m_n$ of meanders of length $n$ 
is directly related to the number $e_n$ of excursions of length $n$ via 
\begin{equation}m_n = P(1)^n - \sum_{k=0}^{n-1}  P(1)^{k} e_{n-k-1}.\label{meanderformula}
\end{equation}
In the sequel, we therefore focus on giving explicit expressions for $e_n$.

\medskip
A key tool for finding a formula for the coefficients of power series satisfying 
implicit equations is the 
Lagrange inversion formula \cite{Lagrange70}, independently discovered
in a slightly extended form by B\"urmann \cite{Buermann} (see also
\cite{LaLeAA}).
In the statement of the theorem and also later, we use the
 \emph{coefficient extractor} $[z^n] F(z) :=f_n$ for a power series $F(z)=\sum f_n z^n $.

\begin{theorem}[\sc Lagrange--B\"urmann inversion formula]
\label{thm:LagrangeBurmann}
Let $F(z)$ be a formal power series which satisfies $F(z)=z \phi(F(z))$, 
where $\phi(z)$ is a power series with $\phi(0)\neq 0$. 
Then, for any Laurent\footnote{Here, by Laurent series we mean a series of the form $H(z)=\sum_{n\ge a} H_n\,z^n$ for some (possibly negative) integer~$a$.} series $H(z)$ and for all non-zero integers $n$, we have
$$[z^n] H(F(z)) = \frac{1}{n} [z^{n-1}] H'(z) \phi^n(z) \,.$$
\end{theorem}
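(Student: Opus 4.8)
The plan is to work entirely inside the field of formal Laurent series and to reformulate coefficient extraction through the formal residue $\operatorname{res}_z G := [z^{-1}]G$, so that the left-hand side becomes $[z^n]H(F(z)) = \operatorname{res}_z\big(z^{-n-1}H(F(z))\big)$. Before starting, I would record the structural consequences of the hypotheses: since $\phi(0)\neq 0$ and $F=z\phi(F)$, the series $F$ has order exactly one, that is $F(0)=0$ and $F'(0)=\phi(0)\neq 0$. This is precisely what makes $H(F(z))$ a well-defined Laurent series even when $H$ carries negative-order terms (each fixed power of $z$ receives only finitely many contributions), and it guarantees that $F$ admits a compositional inverse.

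I would first establish three elementary facts about the residue. First, $\operatorname{res}_z G'=0$ for every formal Laurent series $G$, because the coefficient of $z^{-1}$ in $G'$ arises only from the $z^0$-term of $G$, whose derivative vanishes. Second, integration by parts $\operatorname{res}_z(G'K)=-\operatorname{res}_z(GK')$, obtained by applying the first fact to $(GK)'$. Third, the change-of-variables rule $\operatorname{res}_z G(z) = \operatorname{res}_w\big(G(\psi(w))\,\psi'(w)\big)$, valid whenever $\psi$ is a series of order one; this can be reduced to the monomial case $G=z^m$, where it amounts to checking $\operatorname{res}_w(\psi(w)^m\psi'(w))=\delta_{m,-1}$. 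Setting $\psi(w):=w/\phi(w)$, the identity $z=F(z)/\phi(F(z))$ says exactly that $\psi$ is the inverse series of $F$, and $\psi$ has order one because $\phi(0)\neq 0$; this legitimizes the substitution $w=F(z)$, equivalently $z=\psi(w)$.

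The computation then proceeds cleanly. Starting from $n[z^n]H(F(z)) = n\operatorname{res}_z\big(z^{-n-1}H(F(z))\big)$, I rewrite $z^{-n-1}=-\tfrac1n\tfrac{d}{dz}(z^{-n})$, which is where $n\neq 0$ is used, and apply integration by parts to transfer the derivative onto $H(F(z))$, obtaining $\operatorname{res}_z\big(z^{-n}H'(F(z))F'(z)\big)$. Substituting $z=\psi(w)$ via the change-of-variables rule, the decisive simplification is $F'(\psi(w))\,\psi'(w)=\tfrac{d}{dw}F(\psi(w))=\tfrac{d}{dw}w=1$ by the chain rule, so that factor collapses and only $\psi(w)^{-n}H'(w)=w^{-n}\phi(w)^nH'(w)$ survives. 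Reading off the residue yields $[w^{n-1}]\big(H'(w)\phi(w)^n\big)$, which is the asserted right-hand side after dividing by $n$.

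The main obstacle I expect is not the algebra but the formal-series bookkeeping: one must verify that every substitution and infinite summation is legitimate in the Laurent setting. The delicate points are that composing the Laurent series $H$ with the order-one series $F$ produces a genuine Laurent series, that the change-of-variables identity remains valid when $G$ itself carries negative powers of its variable, and that all manipulations are uniform in the sign of $n$. Once the change-of-variables rule is proven in full generality, the remainder is a direct chain of the three residue facts, with no case distinction needed between positive and negative~$n$.
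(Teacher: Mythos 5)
Your proof is correct and complete. The paper does not actually prove Theorem~\ref{thm:LagrangeBurmann} --- it only cites \cite[Chapter~A.6]{FlSe09} and \cite[Theorem~5.4.2]{stan99} --- and your argument is precisely the classical formal-residue proof given in those references: the three residue lemmas (vanishing of $\operatorname{res} G'$, integration by parts, and the change-of-variables rule reduced to $\operatorname{res}_w(\psi^m\psi')=\delta_{m,-1}$), followed by the substitution $z=\psi(w)=w/\phi(w)$ and the collapse $F'(\psi(w))\,\psi'(w)=1$. The bookkeeping points you flag (order of $F$ equal to one so that $H\circ F$ is a genuine Laurent series, and uniformity in the sign of $n$) are exactly the ones that need checking, and your treatment of them is sound.
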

\begin{proof}
See~\cite[Chapter~A.6]{FlSe09} or~\cite[Theorem~5.4.2]{stan99}.
\end{proof}

\pagebreak
\begin{table}[ht]\scalebox{0.94}{\hspace{-2mm}
\begin{tabular}{|Sc|Sc|Sc|}
\hline
\begin{tabular}{c} name and the associated \\ 
step polynomial $P(u)$\end{tabular} & number $e_n$  of excursions of length $n$ \\
\hline
\begin{tabular}{c}
Dyck paths \\
$P(u)=\frac{1}{u}+u$ \end{tabular} & $\displaystyle{e_{2n}= \frac{1}{n+1} \dbinom{2n}{n}}$\\
\hline
\begin{tabular}{c}Motzkin paths \\ 
$P(u)=\frac{1}{u}+1+u$\end{tabular}  & $\displaystyle{e_n=\frac{1}{n+1} \sum_{k=0}^{\lceil \frac{n+1}{2} \rceil} \dbinom{n+1}{k} \dbinom{n+1-k}{k-1}}$\\
\hline
\begin{tabular}{c}weighted Motzkin paths \\ $P(u)=\frac{p_{-1}}{u}+p_0+p_1 u$ \end{tabular} & $\displaystyle{e_{n}=\frac{1}{n+1} \sum_{k=0}^{\lceil \frac{n+1}{2} \rceil} \dbinom{n+1}{k} \dbinom{n+1-k}{k-1} (p_1 p_{-1})^{k-1} p_0^{n+2-2k}}$\\
\hline
\begin{tabular}{c}bicoloured Motzkin paths\\ $ P(u)=\frac{1}{u}+2+u $\end{tabular}  & $\displaystyle{e_{n+1}=\frac{1}{n+1}\dbinom{2n}{n}}$\\
\hline
\begin{tabular}{c}{\L}ukasiewicz paths\\
$P(u)=\frac{1}{u}+1+u+u^2+\cdots$
\end{tabular}
& $\displaystyle{e_{n}=\frac{1}{n+1}\dbinom{2n}{n}}$\\
\hline
\begin{tabular}{c}$d$-ary trees \\$P(u)=\frac{1}{u}+u^{d-1}$\end{tabular}& $e_{dn+1}=\displaystyle{\frac{1}{(d-1)n+1}\dbinom{dn}{n}}$\\
\hline
\begin{tabular}{c} $\{1,2,\dots,d\}$-ary trees \\$P(u)=\frac{1}{u}+1+\dots+u^{d-1}$\end{tabular}& $e_{n}=\displaystyle{ \frac{1}{n}\sum_{j=0}^{\lfloor{\frac{n-1}{d+1}}\rfloor}   (-1)^j \dbinom{n}{ j}\dbinom{2n-2-j(d+1)}{n-1}
\ }$\\
\hline
\begin{tabular}{c}$\{d,d+1\}$-ary trees\\$P(u)=\frac{1}{u}+u^{d-1}+u^d$\end{tabular}& $\displaystyle{e_n=\frac{1}{n}\sum_{k=0}^{\lfloor \frac{n-1}{d} \rfloor} \dbinom{n}{k} \dbinom{k}{n-1-dk}}$\\
\hline
\end{tabular}
}
\vskip8pt
\caption{Closed-form formulas for some famous families of lattice
  paths. 
}
\label{TableLukaPaths}
\end{table}

\vspace{-5mm}
Table~\ref{TableLukaPaths} presents several applications of this Lagrange inversion 
formula to lattice path enumeration.
It leads to the Catalan numbers for Dyck paths, 
and to the Motzkin numbers for the Motzkin paths, i.e., excursions associated with the step set $\S = \{-1,0,+1\}$.
They are two of the most  ubiquitous number sequences in combinatorics, 
see~\cite[Ex.~6.19,  6.25, and 6.38]{stan99} for more information.
Table~\ref{TableLukaPaths} also contains an example of weighted paths
(namely weighted Motzkin paths and the special case of bicoloured
Motzkin paths), as well as an example with an infinite set of steps
(namely the \L ukasiewicz paths with all possible steps allowed).

All of the examples in Table~\ref{TableLukaPaths} are intimately
related to families of trees (as suggested by some of the namings in
the table).
In order to explain this, we recall that
an \emph{ordered tree} is a rooted 
tree for which an ordering of the children is specified for each vertex,
and for which its arity (i.e., the outdegree, the number of children
of each node) is restricted to be in a subset $\A$ of
$\N$.\footnote{In this article, by convention $0\in \N$.}
If $\A=\{0,2\}$, this leads to the classical binary trees counted by the Catalan numbers;
if $\A=\{0,1,2\}$, this leads to the unary-binary trees counted by Motzkin numbers, and if $\A=\N$, this gives the ordered trees (also called planted plane trees), which are also counted by Catalan numbers.
Any ordered tree can be traversed starting from the root in 
\emph{prefix order}: 
one starts from the root and proceeds depth-first and left-to-right. 
The listing of the outdegrees of nodes in prefix order is called the \emph{preorder degree sequence}. This 
characterizes a tree unambiguously, see Figure~\ref{lukabijection}, 
and it is best summarized by the following folklore proposition.

\begin{proposition}[\sc {\L}ukasiewicz correspondence]
	Ordered trees 
	are in bijection with {\L}ukasiewicz excursions.
\end{proposition}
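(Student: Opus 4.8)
The plan is to realize the bijection explicitly through the \emph{preorder degree sequence}, exactly as foreshadowed in the paragraph preceding the statement, and then to translate that sequence into a lattice path by the rule that a node of outdegree $a$ becomes a step of size $a-1$. First I would define a map $\Phi$ from ordered trees to step sequences: given an ordered tree $T$ with $n+1$ nodes, traverse it in prefix order, record the outdegrees $d_1,d_2,\dots,d_{n+1}$, and set $s_i := d_i-1$. Since every outdegree is a nonnegative integer (and the final node, being the last one reached in preorder, is a leaf, so $d_{n+1}=0$), each $s_i$ lies in $\{-1,0,1,2,\dots\}$ with $-1$ occurring; thus the result is a candidate {\L}ukasiewicz step sequence.

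The key step is to control the partial sums $h_k := \sum_{i=1}^{k} s_i = \sum_{i=1}^{k}(d_i-1)$, which are the successive altitudes of the path. The clean invariant to prove is that
\[
S_k := 1 + h_k = 1 + \sum_{i=1}^{k}(d_i-1)
\]
equals the number of nodes that have been ``opened'' but not yet visited after the first $k$ nodes of the traversal, i.e.\ the number of empty child-slots still waiting to be filled. One checks $S_0=1$ and the recursion $S_k = S_{k-1}-1+d_k = S_{k-1}+s_k$, and, because the traversal terminates precisely when the last node is reached, $S_k\ge 1$ for $0\le k\le n$ while $S_{n+1}=0$. Translating back, $h_k=S_k-1\ge 0$ for $k\le n$, whereas the forced terminal leaf gives $s_{n+1}=-1$ and $h_{n+1}=-1$. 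Deleting this forced last step yields a path of length $n$ that starts at $0$, stays at altitude $\ge 0$, and returns to $0$: a {\L}ukasiewicz excursion. This matches the sizes, since ordered trees on $n+1$ nodes and excursions of length $n$ are both counted by $C_n$.

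For the converse I would run the construction backwards. Starting from a {\L}ukasiewicz excursion of length $n$, I append one step $-1$ to obtain a path of length $n+1$ ending at altitude $-1$ and staying $\ge 0$ in-between, then decode its steps as outdegrees $d_i=s_i+1$. A greedy slot-filling procedure reconstructs the tree: maintain a list of unfilled slots and attach to the current node its $d_i$ children as new slots. The prefix positivity $S_k\ge 1$ for $k\le n$ guarantees that a slot is always available when the next node must be placed, and $S_{n+1}=0$ guarantees that the process consumes exactly all nodes and terminates with no slot left over. This is precisely where the excursion hypothesis is used, and it is what makes the decoding well defined. Since the paper already records that the preorder degree sequence determines an ordered tree unambiguously, this greedy decoding is the inverse of $\Phi$.

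Finally I would verify that $\Phi$ and its inverse are mutually inverse, which is immediate once both directions are seen to be governed by the same recursion $S_k=S_{k-1}-1+d_k$ together with the boundary data $S_0=1$ and $S_{n+1}=0$. The main obstacle is not any single computation but the careful bookkeeping of the off-by-one arising from the forced terminal $-1$ step: one must line up ordered trees on $n+1$ nodes with excursions of length $n$, and one must prove that the prefix-positivity condition on altitudes is \emph{equivalent} to---not merely implied by---the reconstructibility of a genuine tree. Establishing this equivalence, i.e.\ both injectivity and surjectivity of the decoding, is the crux; everything else is routine translation between ``outdegree $a$'' and ``step $a-1$''.
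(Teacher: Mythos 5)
Your proposal is correct and follows essentially the same route as the paper: encode the preorder degree sequence as steps $\sigma_j-1$ and drop the forced terminal $-1$. The paper only sketches the forward map, whereas you additionally verify the positivity invariant via the open-slot count $S_k$ and construct the inverse by greedy slot-filling; this is a more complete write-up of the same folklore argument, not a different approach.
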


\begin{proof}
Given an ordered tree with $n$ nodes,
the preorder sequence can be interpreted as a lattice path. 
Let $(\sigma_j)_{j=1}^{n}$ be a preorder degree sequence. With each $\sigma_j$ we associate a step $(1,\sigma_j-1) \in \N\times\Z$. 
Note that, as the minimal degree is $0$, our smallest step is $-1$. 
Starting at the origin, we concatenate these steps for $j=1,2,\dots,n-1$,
ignoring the last step. In this way, we obtain
a {\L}ukasiewicz excursion of length $n-1$.
\end{proof}

\begin{figure}[!ht]
\scalebox{0.88}{
	\includegraphics[height=25mm]{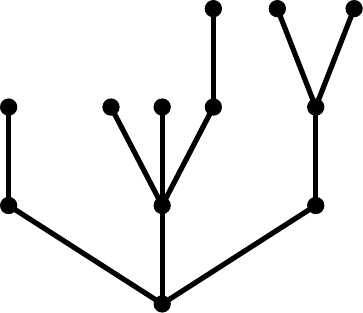} 
	\qquad \qquad
 \includegraphics[height=25mm]{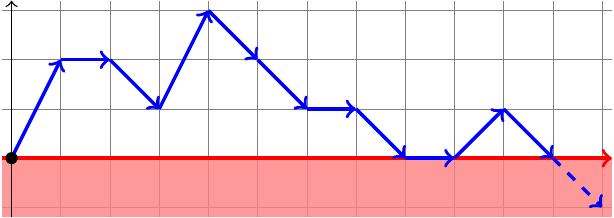} 
 }
	\caption{The bijection between trees and {\L}ukasiewicz
          paths. The preorder degree sequence
          $(3,1,0,3,0,0,1,0,1,2,0,0)$ uniquely characterizes the tree,
          and gives the corresponding {\L}ukasiewicz path with step sequence $(2,0,-1,2,-1,-1,-1,0,-1,0,1,-1,-1)$. Dropping the last $-1$ step yields an excursion.
	\label{lukabijection}}
\end{figure}


As one can see,
the combinatorics of the {\L}ukasiewicz paths is well understood  (see e.g.~\cite{FlSe09,Stanley86}),
and the true challenge is to 
analyse lattice paths with other negative steps than just $-1$.
The smallest non-{\L}ukasiewicz cases are the Duchon lattice paths (steps $\S=\{-2,+3\}$), and 
the Knuth lattice paths (steps $\S=\{-2,+5\}$). Their enumerative and asymptotic properties are the subject of another article in this volume~\cite{BaWa16}. 
For these two families of lattice paths, the asymptotics are tricky, because the generating functions involve several dominant singularities. 
In the next sections, we concentrate on closed formulas which appear for  many other non-{\L}ukasiewicz cases.

\section{(Old-time) Basketball walks: steps \texorpdfstring{$\S=\{-2,-1,+1,+2\}$}{S=\{-2,-1,+1,+2\}}}\label{Section3}

\begin{figure}[h!]\vspace{-3mm}
\includegraphics[height=45mm]{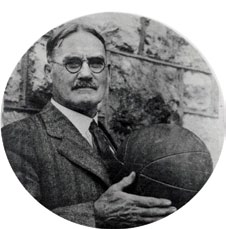}\vspace{-1mm}
\caption{Since its creation in 1892 by James Naismith (November 6, 1861 -- November 28, 1939), the rules of basketball evolved.
For example, since 1896, field goals and free throws were counted as two and one points, respectively.
The international rules were changed in 1984 so that a 
``far'' field goal was now rewarded by $3$ points, while ``ordinary'' field 
goals remained at $2$ points, a free throw still being worth one point.}
\label{fig:BB}
\end{figure}
We now turn our attention to a class of lattice paths (lattice walks) 
with rich combinatorial properties: \emph{the basketball walks}. 
They are constructed from the step set $\S=\{-2, -1, +1,+2\}$.
This terminology was introduced by Arvind Ayyer and Doron Zeilberger~\cite{ZeilbergerAyyer07}, and these walks were later also considered by Mireille Bousquet-M\'elou~\cite{BousquetMelou08}. They can be seen
as the evolution of the score during a(n old-time) basketball game
(see Figure~\ref{fig:BB}).

Ayyer, Zeilberger, and Bousquet-M\'elou 
found interesting results on the shape of the algebraic equations 
satisfied by the excursion generating function, 
and similar properties when the height of the excursion is bounded.
In this article, we analyse a generalization in which the starting point 
and the end point of the walks do not necessarily have altitude~$0$. 
Since, in that case, we lose a natural factorization happening for excursions,
we are led to variations of certain parts in the kernel method. 
In addition, we are interested in closed-form expressions for the number of walks of length $n$. 
This is complementary to the results in~\cite{BaFl02} and in~\cite{BaWa16}.
Moreover, contrary to the previous section, 
these walks are not {\L}ukasiewicz paths any more.
This makes them harder to analyse (the easy bijection with trees is
lost, for example).
Despite all that, the kernel method will strike again, thus illustrating our main motto:
$$
\text{\it ``The kernel method is the method of choice for problems on directed lattice paths!''}
$$

\subsection{Generating functions for positive (old-time) 
basketball walks: the kernel method}

We define \emph{positive walks} as walks staying strictly above the $x$-axis, possibly touching it at the first or last step. Returning to the basketball
interpretation, these correspond to the evolution of basketball scores
where one team (the stronger team, the richer team?) is always ahead of the other team.

Let $G_{j,n,k}$ be the number of such walks running from $(0,j)$ to $(n,k)$, and define by $G_j(z,u)$ the generating function of positive walks starting at $(0,j)$. We write
\begin{equation}
	G_j(z,u) := \sum_{n,k \geq 0}G_{j,n,k}z^nu^k = \sum_{n=0}^\infty g_{j,n}(u)z^n = \sum_{k=0}^\infty G_{j,k}(z)u^k.
\end{equation}
Similar to Section~\ref{Section2},
we shall need the polynomial $g_{j,n}(u)$, the
generating function for all walks with $n$ steps, and the series 
$G_{j,k}(z)$, the generating function for all walks ending 
at altitude $k$. 
The bivariate generating function $G_j(z,u)$ is analytic for
$|z|<1/P(1)$ and $|u|\le1$.  

A walk is either the single initial point at altitude $j$, or a walk
followed by a step not reaching altitude~0 or below. This leads to the
functional equation 
\begin{equation}
	(1-zP(u))G_j(z,u)=u^j-z\big(G_{j,1}(z)+G_{j,2}(z)+G_{j,1}(z)/u\big),
\quad \quad j>0, \label{fund.eq}
\end{equation}
 where the \textit{step polynomial\/} $P(u)$ is given by
\begin{equation*}
	P(u):=u^{-2}+u^{-1}+u+u^2.
\end{equation*}
Again, we call the factor $1-zP(u)$ on the left-hand side of
\eqref{fund.eq} the \textit{kernel\/} of the equation, and
denote it by $K(z,u)$.

We refer to~\eqref{fund.eq} as the \textit{fundamental functional
  equation} for $G_j(z,u)$. 
The equation has a small problem though: this is {\it one} equation with
{\it three} unknowns, namely $G_j(z,u), G_{j,1}(z)$, and $G_{j,2}(z)$! 
The idea of the so-called {\it`kernel method'} is to equate
the kernel $K(z,u)$ to $0$,
thus binding $u$ and $z$ in such a way that the left-hand side
of~\eqref{fund.eq} vanishes. This produces two extra equations.  

To equate $K(z,u)$ to zero means to put
\begin{equation} \label{eq:kernel.equation}
	1-zP(u)=0 \quad \text{ or equivalently } \quad u^2-zu^2P(u)=0.
\end{equation}
We call this equation the \textit{kernel equation}. 
As an equation of degree~4 in $u$, it has four roots. 
We call the two small roots (that is, the roots which tend to~0 when $z$
approaches~0) $u_1(z)$ and $u_2(z)$.

Then, on the complex plane slit along the negative real axis, 
we can identify the small roots $u_1(z)$ and $u_2(z)$ as
\begin{align*}
	u_1(z) &= -\frac{1}{4} \left( \frac{z - \sqrt{4z+9z^2}}{z} + \sqrt{\frac{4 - 6z - 2\sqrt{4z+9z^2}}{z}} \right)\\
	&=\sqrt{z}+\frac{1}{2}z+\frac{1}{8}z^{3/2}+\frac{1}{2}z^2+\frac{159}{128}z^{5/2}+O(z^3),\\
	u_2(z) &= -\frac{1}{4} \left( \frac{z + \sqrt{4z+9z^2}}{z} - \sqrt{\frac{4 - 6z + 2\sqrt{4z+9z^2}}{z}} \right)\\
	&=-\sqrt{z}+\frac{1}{2}z-\frac{1}{8}z^{3/2}+\frac{1}{2}z^2-\frac{159}{128}z^{5/2}+O(z^3).
\end{align*}
Moreover, their Puiseux expansions are related via the following proposition.
\begin{proposition}[\sc Conjugation principle for two small roots]\label{prop:conjugation}
The small roots $u_1$ and $u_2$ of\/ $1-zP(u)=0$ 
satisfy
\begin{equation}
u_1(z)=\sum_{n\geq 1} a_n z^{n/2} \text{ and } u_2(z) =\sum_{n\geq 1} (-1)^n a_n z^{n/2}\,.
\end{equation}
\end{proposition}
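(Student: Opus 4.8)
The plan is to exploit a sign symmetry of the kernel equation that becomes transparent once we pass to the variable $t := z^{1/2}$. Clearing the denominator in $1-zP(u)=0$ by multiplying with $u^2$ gives the polynomial equation
$$Q(u,t) := u^2 - t^2\,(u^4 + u^3 + u + 1) = 0,$$
and the decisive observation is that $Q$ depends on $t$ only through $t^2$, so that $Q(u,-t)=Q(u,t)$. In other words, the substitution $t\mapsto -t$ (that is, $\sqrt{z}\mapsto-\sqrt{z}$) permutes the four roots of the kernel equation among themselves.

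First I would establish that the two small roots are honest power series in $t$, i.e.\ that $u_1(z)=\sum_{n\ge 1}a_n t^n$ with only half-integer powers of $z$ occurring (and no finer fractional powers). By the theory of Newton polygons and Puiseux expansions used above, a root tending to~$0$ must balance $u^2 \sim z$ in $u^2 = z(1+u+u^3+u^4)$, whence $\operatorname{ord}_z u = \tfrac12$ and $a_1=1$. The corresponding edge of the Newton polygon carries exactly the two small branches and has ramification index $2$, so both are power series in $t=z^{1/2}$. This is precisely the regime already recorded by the explicit expansions of $u_1$ and $u_2$ displayed just before the statement.

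With this in hand the conclusion is a one-line symmetry argument. Writing $U_1(t):=u_1(z)$ as an identity of power series in $t$, we have $Q(U_1(t),t)=0$; replacing $t$ by $-t$ and using $Q(u,-t)=Q(u,t)$ yields $Q(U_1(-t),t)=0$. Hence $U_1(-t)=\sum_{n\ge 1}(-1)^n a_n t^n$ is again a small root, and its leading term is $-a_1 t = -\sqrt{z}$, which differs from that of $u_1$. Since the small root with leading behaviour $-\sqrt{z}$ is unique, it must coincide with $u_2$, giving
$$u_2(z)=\sum_{n\ge 1}(-1)^n a_n z^{n/2},$$
exactly as claimed.

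The step I expect to be the main obstacle is the first one: certifying that the ramification of the small branches is \emph{exactly}~$2$, so that only half-integer powers of~$z$ appear. Once that is secured, everything reduces to the formal substitution $t\mapsto -t$ together with uniqueness of the branch with prescribed leading term; the symmetry $Q(u,-t)=Q(u,t)$ does all the real work.
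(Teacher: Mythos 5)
Your argument is correct. It rests on the same underlying mechanism as the paper's proof --- the kernel equation, once multiplied by $u^2$, only sees $z=t^2$, so the two determinations of $z^{1/2}$ get exchanged --- but the technical execution differs. The paper rewrites the cleared equation as $u = X\,(1+u+u^3+u^4)^{1/2}$ with $X=\pm z^{1/2}$ and invokes the uniqueness of the formal power series solution $u(X)$ of an equation of the form $u=X\phi(u)$ with $\phi(0)\neq 0$; this single stroke delivers both facts at once, namely that the small branches are power series in $z^{1/2}$ (no finer ramification) and that they are obtained from one another by $z^{1/2}\mapsto -z^{1/2}$. You instead separate the two facts: a Newton-polygon argument to certify that the small branches live in $\mathbb{C}[[t]]$ with $t=z^{1/2}$ (the edge from $(0,1)$ to $(2,0)$ has slope $-1/2$ and its edge polynomial $c^2-1$ has the simple roots $c=\pm1$, so no further ramification occurs), followed by the evenness $Q(u,-t)=Q(u,t)$ and an identification of $U_1(-t)$ with $u_2$ via its leading term $-\sqrt{z}$. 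The step you flag as the main obstacle is exactly the one the paper's formulation absorbs for free, since $u=X\phi(u)^{1/2}$ manifestly produces a power series in $X$; on the other hand, your version makes that ramification claim explicit rather than implicit, and your symmetry argument adapts verbatim to the general $c$-th-root-of-unity conjugation principle later in the paper (replace $t\mapsto -t$ by $t\mapsto\omega t$). Both proofs are complete and correct.
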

\begin{proof}
The kernel equation yields
$$
u=X(1+u+u^3+u^4)^{1/2},
$$
with $X=z^{1/2}$ or $X=-z^{1/2}$. Since the above equation possesses
a unique formal power series solution $u(X)$, the claim follows.
\end{proof}

By substituting the small roots $u_1(z)$ and $u_2(z)$ of the kernel equation
\eqref{eq:kernel.equation} into the fundamental functional equation \eqref{fund.eq}, we see that the left-hand side vanishes. Subsequently, we solve for $G_{j,1}(z)$ and $G_{j,2}(z)$ and get\footnote{In this article, whenever we thought it could ease the reading, without harming the understanding,
we write $u_1$ for $u_1(z)$, or $F$ for $F(z)$, etc.}
\begin{align}
	G_{j,1}(z)&=-\dfrac{u_1u_2(u_1^j-u_2^j)}{z(u_1-u_2)}, & j>0, \label{eq:G1}\\ 
	G_{j,2}(z)&=\dfrac{u_1u_2(u_1^j-u_2^j) + u_1^{j+1} - u_2^{j+1}}{z(u_1-u_2)}, & j>0. \label{eq:G2}
\intertext{Substitution in the fundamental functional equation \eqref{fund.eq} 
then yields}
	G_j(z,u) &= \frac{u^j-z(G_{j,1}(z)+G_{j,2}(z)+G_{j,1}(z)/u) }{1-zP(u)}, & j>0. \label{eq:G}
\end{align}

By means of the kernel method, we have thus derived an explicit expression for the bivariate generating function $G_j(z,u)$ for walks starting at altitude $j>0$. 

In the following proposition, we summarize our findings so far.
In addition, we express the generating function for walks from
altitude~$j$ to altitude~$k$ (with $j,k>0$)  explicitly in terms of the small
roots $u_1(z)$ and $u_2(z)$, and
we also cover the special case $j=0$, 
which offers some nice simplifications.

\begin{proposition}
	\label{prop:G0k}
As before,
	let $G_{j,k}(z)$ be the generating function for positive
basketball walks with steps $-2,-1,+1,+2$ starting at altitude $j$ and ending at altitude $k$. Furthermore, let $u_1(z)$ and $u_2(z)$ be the small roots of the kernel equation $1-zP(u)=0$, with $P(u)=u^{-2}+u^{-1}+u+u^2$. Then, for $j,k > 0$, we have
	\begin{align} 
	G_{0,k}(z) &= \frac{u_1^{k+1}(z)-u_2^{k+1}(z)}{u_1(z)-u_2(z)}
							, \label{eq:G0k} \\
	G_{j,k}(z) &= -\frac{u_1(z) u_2(z)}{z} 
			\sum_{i=0}^j \frac{u_1^{j-i+1}(z)-u_2^{j-i+1}(z)}{u_1(z)-u_2(z)} 
			    \frac{u_1^{k-i+1}(z)-u_2^{k-i+1}(z)}{u_1(z)-u_2(z)}, \label{eq:Gjkhi}
	\end{align}		
\end{proposition}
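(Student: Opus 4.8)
The plan is to prove the two formulas separately, using throughout the palindromic symmetry $P(u)=P(1/u)$ of the step polynomial. This symmetry forces the four roots of the kernel equation \eqref{eq:kernel.equation} to occur in reciprocal pairs (if $u$ solves $1-zP(u)=0$ then so does $1/u$), so that the two large branches are precisely $v_1=1/u_1$ and $v_2=1/u_2$, and one has the factorization
\[
u^2\bigl(1-zP(u)\bigr)=-z\,(u-u_1)(u-u_2)(u-1/u_1)(u-1/u_2).
\]
For brevity I write $s=u_1+u_2$, $p=u_1u_2$, and $h_m=\dfrac{u_1^{m+1}-u_2^{m+1}}{u_1-u_2}$; the $h_m$ satisfy $h_m=s\,h_{m-1}-p\,h_{m-2}$ with $h_{-1}=0$ and $h_0=1$, and \eqref{eq:G0k} is just the assertion $G_{0,k}=h_k$.

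For \eqref{eq:G0k} I would not re-run the kernel method at $j=0$ (the step-by-step decomposition leading to \eqref{fund.eq} assumes $j>0$; indeed \eqref{eq:G1} specialized to $j=0$ would give the wrong value $G_{0,1}=0$), but argue combinatorially. First, reading a walk backwards and negating each step sends a positive walk from $j$ to $k$ to a positive walk from $k$ to $j$, because $\S=\{-2,-1,+1,+2\}$ is invariant under negation and the altitude profile is merely reversed; hence $G_{j,k}=G_{k,j}$, and in particular $G_{1,k}=G_{k,1}$ and $G_{2,k}=G_{k,2}$ are obtained from \eqref{eq:G1}--\eqref{eq:G2} with $j$ replaced by $k$. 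Second, a positive walk from altitude $0$ to altitude $k>0$ must begin with a $+1$ or a $+2$ step and then continue as a positive walk from altitude $1$ or $2$; this gives $G_{0,k}=z\bigl(G_{1,k}+G_{2,k}\bigr)$. Substituting the two expressions, the terms proportional to $u_1u_2(u_1^{k}-u_2^{k})$ cancel and leave $G_{0,k}=h_k$, as claimed.

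For \eqref{eq:Gjkhi} I would extract $[u^k]$ from \eqref{eq:G}. Clearing denominators gives $G_j(z,u)=N(u)\big/\bigl(u^2(1-zP(u))\bigr)$ with $N(u)=u^{j+2}-z(G_{j,1}+G_{j,2})u^2-zG_{j,1}u$; the kernel conditions $N(u_1)=N(u_2)=0$ (the very equations that produced \eqref{eq:G1}--\eqref{eq:G2}) let me write $N(u)=(u-u_1)(u-u_2)\,Q(u)$, and cancelling against the factorization above yields
\[
G_j(z,u)=\frac{Q(u)}{-z\,(u-1/u_1)(u-1/u_2)}=-\frac{p}{z}\,\frac{Q(u)}{(1-u u_1)(1-u u_2)}.
\]
Since $\dfrac{1}{(1-uu_1)(1-uu_2)}=\sum_{m\ge0}h_m\,u^m$, extracting the coefficient of $u^k$ gives $G_{j,k}=-\dfrac{p}{z}\sum_l q_l\,h_{k-l}$, where $Q(u)=\sum_l q_l u^l$. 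It remains to identify $Q$: matching coefficients in $N(u)=(u^2-su+p)Q(u)$ gives the leading values $q_j=h_0$, $q_{j-1}=h_1$ and the three-term recurrence $q_{m-2}=s\,q_{m-1}-p\,q_m$, whose solution is $q_l=h_{j-l}$ for $1\le l\le j$, while the vanishing constant term of $N$ fixes $q_0$. Inserting these coefficients into the convolution $-\frac pz\sum_l q_l h_{k-l}$ reassembles the nested double sum of \eqref{eq:Gjkhi}.

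The only genuine work is this last step, the determination of $Q(u)$: one has to solve the three-term recurrence in closed form in terms of the $h_m$ and, crucially, treat the two boundary coefficients carefully, since it is exactly these that pin down the precise range of the summation index in \eqref{eq:Gjkhi}. The other ingredient that must be verified is the reciprocal-root factorization, which is what makes the two large branches cancel from the denominator and is therefore responsible for the clean rational form in $u_1,u_2$ alone. Everything else is bookkeeping, and I would validate the result on small cases (for instance $G_{1,k}=G_{k,1}$ against \eqref{eq:G1}, and the explicit series for $G_{1,1}$ and $G_{2,2}$).
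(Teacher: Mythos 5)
Your proof of \eqref{eq:G0k} is exactly the paper's: first step is $+1$ or $+2$, so $G_{0,k}=z(G_{1,k}+G_{2,k})$, then time reversal and \eqref{eq:G1}--\eqref{eq:G2}. For \eqref{eq:Gjkhi}, however, you take a genuinely different route. The paper argues combinatorially, via a first-passage decomposition at the minimal altitude $i$ of the walk: a descent from $j$ to $i$ counted by $h_{j-i}$, an excursion at level $i$ counted by $E=-u_1u_2/z$, and a time-reversed ascent to $k$ counted by $h_{k-i}$, summed over $i$. You instead extract $[u^k]$ algebraically from the kernel-method expression \eqref{eq:G}: the numerator vanishes at $u_1,u_2$ by construction, the palindromic symmetry $P(u)=P(1/u)$ makes the large branches the reciprocals of the small ones, and after cancellation you are left with $-\frac{u_1u_2}{z}\,Q(u)/\bigl((1-uu_1)(1-uu_2)\bigr)$, whose coefficients are a convolution of the $h_m$ with the coefficients of $Q$. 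This is correct and self-contained (it is closer in spirit to the paper's Proposition~\ref{prop:Gjk} than to its proof of \eqref{eq:Gjkhi}); what the paper's decomposition buys is a bijective explanation of why complete homogeneous symmetric polynomials appear, while your computation buys an automatic and unambiguous determination of the summation range from the boundary coefficients of $Q$.

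On that last point, be aware that your derivation and the paper's displayed formula do not quite agree, and yours is the correct one: the constant-term equation gives $q_0=0$ (since $u_1u_2\neq0$), so the convolution runs over $i=1,\dots,j$ only, in agreement with the paper's verbal decomposition (``the lowest altitude $i$ satisfies $1\le i\le j$'') and with the sanity check $G_{1,1}=-u_1u_2/z$, whereas \eqref{eq:Gjkhi} as printed starts the sum at $i=0$ and would give $G_{1,1}=-\frac{u_1u_2}{z}\bigl(1+(u_1+u_2)^2\bigr)$. Your closing sentence, asserting that the convolution ``reassembles the nested double sum of \eqref{eq:Gjkhi}'', glosses over this: it reassembles the sum with lower limit $1$. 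State that explicitly rather than matching the printed range.
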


\begin{proof}
	We start with the proof of \eqref{eq:G0k}.
	The first step of a walk can only be a step of size $+1$ or $+2$. Thus, removing this first step and shifting the origin, we have 
	\begin{equation}
	G_{0,k}(z)=z \left( G_{1,k}(z)+G_{2,k}(z) \right),
	\end{equation}
	where $G_{1,k}(z)$ and $G_{2,k}(z)$ are the generating functions for positive walks running from altitude $1$ to altitude $k$, respectively
from altitude $2$ to altitude $k$. 
This decomposition is illustrated in Figure~\ref{G01walksinitialsteps}.
	
	\begin{figure}[!hb]
		\includegraphics[width=0.47\textwidth]{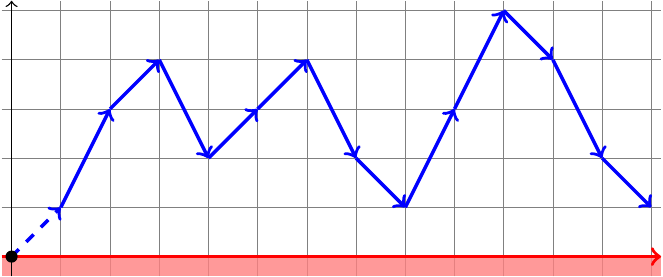} 
		\quad
		\includegraphics[width=0.47\textwidth]{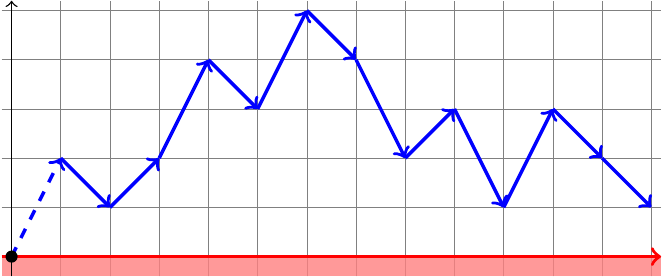} 
		\caption{Two different instances of walks counted by $G_{0,1}(z)$ showing the two possible first steps $+1$ and $+2$. 
		\label{G01walksinitialsteps}}
	\end{figure}
	
	By ``time reversal'' (due to the symmetry of our step set, i.e., $P(u)=P(u^{-1})$), we also have  
	\begin{equation}
	G_{1,k}(z)=G_{k,1}(z),\quad\text{ and }\quad G_{2,k}(z)=G_{k,2}(z),
	\end{equation} 
	where $G_{k,1}(z)$ and $G_{k,2}(z)$ are known from Equations~\eqref{eq:G1} and~\eqref{eq:G2}. Now notice that 
	\begin{align*}
	G_{k,2}(z) &= \dfrac{u_1u_2(u_1^k-u_2^k) + u_1^{k+1} - u_2^{k+1}}{z(u_1-u_2)}
	   =\dfrac{u_1u_2(u_1^k-u_2^k)}{z(u_1-u_2)}+\dfrac{u_1^{k+1} - u_2^{k+1}}{z(u_1-u_2)} \\
	   &=\frac{u_1^{k+1}-u_2^{k+1}}{z(u_1-u_2)}-G_{k,1}(z).
	\end{align*}
This leads directly to~\eqref{eq:G0k}.

\smallskip		
	For computing $G_{j,k}(z)$ with $j,k > 0$, we use a first passage decomposition with respect to minimal altitude of the walk.
Combining \eqref{eq:G0k} with time reversal, we see that
$h_m(z) := \frac{u_1^{m+1}-u_2^{m+1}}{u_1-u_2}$ is the generating
function for basketball walks starting at altitude $m$, staying always
        above the $x$-axis, but ending on the $x$-axis.
Furthermore, by \eqref{eq:G1} with $j=1$, the series
$E(z) = -\frac{u_1 u_2}{z}$ is the generating function for excursions 
(allowed to touch the $x$-axis).
Then the walks from altitude~$j$ to altitude~$k$ can be decomposed
into three sets, as illustrated by Figure~\ref{Gjkdecomp}:  
	\begin{enumerate}
		\item The walk starts at altitude $j$, and continues until it hits for the first time altitude~$i$ (the lowest altitude of the walk, so $1 \leq i \leq j$). This part is counted by $h_{j-i}(z)$. 
		\item The second part is the one from that point to the last time reaching altitude~$i$. In other words, this part is an excursion on level $i$ counted by $E(z)$.
		\item The last part runs from altitude $i$ to altitude
$j$ without ever returning to altitude~$i$. By time reversal one sees
                  that this is counted by $h_{k-i}(z)$. 
	\end{enumerate}	
	Summing over all possible $i$'s, we get~\eqref{eq:Gjkhi}.
\end{proof}

\begin{figure}[!hb]
 		\includegraphics[width=0.87\textwidth]{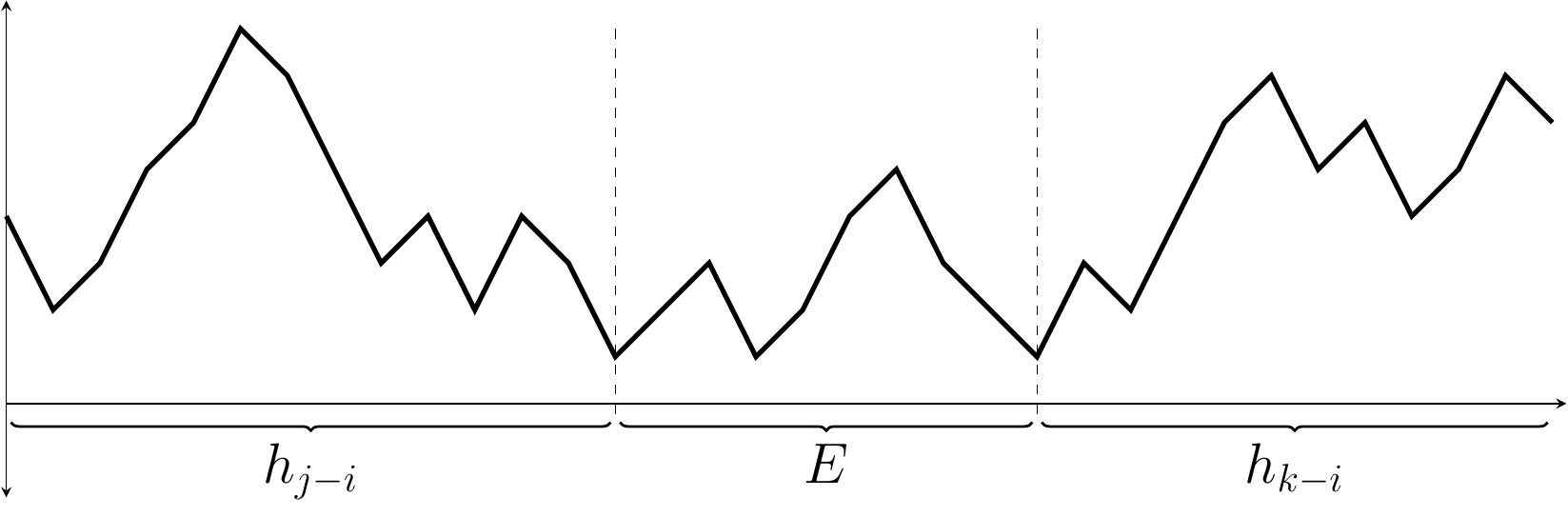} 
		\caption{The decomposition for $G_{j,k}$}
		\label{Gjkdecomp}
	\end{figure} 
	
There is an alternative expression for the generating function $G_{j,k}(z)$,
which we present in the next proposition.

\begin{proposition}[\sc Formula for walks from altitude $j$ to altitude $k$]
	\label{prop:Gjk}
Let $u_1(z)$ and $u_2(z)$ be the small roots of the kernel equation $1-zP(u)=0$, with $P(u)=u^{-2}+u^{-1}+u+u^2$,
	and let $G_{j,k}(z)$ be the generating function for positive
basketball walks starting at altitude $j$ and ending at altitude $k$. 
Then
	\begin{equation} \label{eq:uW}
	G_{j,k}(z)=W_{j-k} +
				 h_j(u_1,u_2) W_{-k} +
				 u_1 u_2 h_{j-1}(u_1,u_2) W_{-k+1},
	\end{equation}
	where $$W_i(z) = z \left( \frac{u_1'}{u_1^{i+1}} + \frac{u_2'}{u_2^{i+1}} \right)$$ is the generating function of unconstrained walks starting at the origin and ending at altitude $i$, and $$h_i(x_1, x_2) = \frac{x_1^{i+1}-x_2^{i+1}}{x_1-x_2}$$ is the complete homogeneous symmetric polynomial of degree $i$ in $x_1$ and $x_2$.
\end{proposition}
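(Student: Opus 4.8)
The plan is to prove \eqref{eq:uW} by reading off the coefficient of $u^k$ directly from the closed form \eqref{eq:G} for the bivariate generating function, exploiting the fact that the coefficients of the unconstrained-walk series $1/(1-zP(u))$ are precisely the quantities $W_i$. The whole argument splits into two independent pieces: first the closed form for $W_i$, then the extraction from \eqref{eq:G}.

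First I would establish the closed form $W_i(z)=z\bigl(u_1'/u_1^{i+1}+u_2'/u_2^{i+1}\bigr)$. By \eqref{eq:unconstrainedpaths} the series $1/(1-zP(u))$ enumerates unconstrained walks, so $W_i=[u^i]\,1/(1-zP(u))$ is the generating function of walks from the origin to altitude $i$. To get the closed form I would write $W_i$ as a Cauchy coefficient integral $\frac{1}{2\pi i}\oint \frac{du}{u^{i+1}(1-zP(u))}$ over a circle separating the small roots from the large roots, and evaluate it by residues. The integrand has simple poles at the four roots of the kernel equation $1-zP(u)=0$, and only the two small roots $u_1,u_2$ lie inside. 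Differentiating $1-zP(u_\ell(z))=0$ and using $P(u_\ell)=1/z$ gives $u_\ell'=-1/(z^2P'(u_\ell))$, so the residue at $u_\ell$ equals $z\,u_\ell'/u_\ell^{i+1}$; summing the two contributions yields the stated formula. Proposition~\ref{prop:conjugation} then guarantees that the half-integer Puiseux parts of the two summands cancel, so $W_i$ is a genuine power series in~$z$, and the symmetry $P(u)=P(1/u)$ gives $W_i=W_{-i}$.

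Next I would substitute into \eqref{eq:G}. Writing its numerator as $u^j-z(G_{j,1}+G_{j,2})-zG_{j,1}u^{-1}$ and using $[u^k]\,u^a/(1-zP(u))=W_{k-a}$, the coefficient of $u^k$ becomes a linear combination of $W_{k-j}$, $W_k$ and $W_{k+1}$. It remains to identify the scalar coefficients. Rewriting \eqref{eq:G1} and \eqref{eq:G2} through the complete homogeneous symmetric polynomials $h_m(u_1,u_2)=(u_1^{m+1}-u_2^{m+1})/(u_1-u_2)$ gives the two clean relations $z(G_{j,1}+G_{j,2})=h_j(u_1,u_2)$ and $zG_{j,1}=-u_1u_2\,h_{j-1}(u_1,u_2)$, in which the $G_{j,1}$ contribution to $G_{j,2}$ conveniently cancels. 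Collecting the three terms and then using $W_i=W_{-i}$ to bring every exponent into the asserted shape produces \eqref{eq:uW}.

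The main obstacle I expect is the bookkeeping of exponents and signs in this last step, compounded by a genuine analytic subtlety: the closed form $z\bigl(u_1'/u_1^{i+1}+u_2'/u_2^{i+1}\bigr)$ represents the walk coefficient $W_i$ only while its index stays small, since at a large positive index the raw formula acquires spurious poles in~$z$ (arising from the residue at $u=0$ that the correct Laurent expansion in $u$ avoids). The delicate point is therefore the disciplined use of the symmetry $W_i=W_{-i}$, and if needed of the recurrence $W_i=z(W_{i-2}+W_{i-1}+W_{i+1}+W_{i+2})+\llbracket i=0\rrbracket$ inherited from $(1-zP(u))\cdot 1/(1-zP(u))=1$, to rewrite each term with an admissible exponent before inserting the closed form. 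As a consistency check I would compare the outcome with \eqref{eq:Gjkhi} and verify the time-reversal symmetry $G_{j,k}=G_{k,j}$ on a few small values of $j,k$.
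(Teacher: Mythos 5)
Your route is the same as the paper's: extract $[u^k]$ from the closed form \eqref{eq:G}, recognize the coefficients $[u^i]\,1/(1-zP(u))$ as the unconstrained-walk series, evaluate them by residues at the two small roots, and use the symmetry $W_i=W_{-i}$. You also supply the details that the paper compresses into ``the remaining factors are obtained by simplifications'': the identities $z(G_{j,1}(z)+G_{j,2}(z))=h_j(u_1,u_2)$ and $zG_{j,1}(z)=-u_1u_2\,h_{j-1}(u_1,u_2)$ are exactly right (they follow from \eqref{eq:G1}--\eqref{eq:G2}), and your observation that the residue formula for $W_i$ picks up an extra contribution from the pole at $u=0$ once $i\ge 2$ --- so that only indices $i\le 1$ may legitimately be inserted into the closed form $z\bigl(u_1'/u_1^{i+1}+u_2'/u_2^{i+1}\bigr)$ --- is a genuine subtlety that the paper's proof does not mention and that explains why the final answer must be expressed with non-positive indices.

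The one place you go astray is the last sentence, where you assert that collecting the three terms ``produces \eqref{eq:uW}''. It does not: honest bookkeeping gives
\begin{equation}
G_{j,k}(z)=W_{j-k}-h_j(u_1,u_2)\,W_{-k}+u_1u_2\,h_{j-1}(u_1,u_2)\,W_{-k-1},
\end{equation}
which differs from the printed \eqref{eq:uW} in the sign of the middle term and in the index of the last term. The discrepancy is not your fault; the printed statement appears to be misprinted. A check at $j=k=1$, where $G_{1,1}(z)=E(z)=-u_1u_2/z=1+2z^2+2z^3+\cdots$, confirms the displayed identity, since $W_0-(u_1+u_2)W_{-1}+u_1u_2W_{-2}=1+2z^2+2z^3+\cdots$, and refutes \eqref{eq:uW} as printed, since $W_0+(u_1+u_2)W_{-1}+u_1u_2W_{0}=1-z+5z^2+\cdots$, which is not even a series with non-negative coefficients. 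So do not force your (correct) computation to land on the stated target; state and prove the corrected identity instead, and note that all three indices $j-k$, $-k$, $-k-1$ are at most $1$, so the residue formula for $W_i$ applies without the spurious pole you rightly worried about.
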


\begin{proof}
Since $G_{j,k}(z) = G_{k,j}(z)$, without loss of generality we may assume that $j\leq k$. We start with~\eqref{eq:G}. Extraction of the coefficient of $u^k$ on the left-hand side gives $G_{j,k}(z)$. As coefficient extraction is linear, we need to find expressions for
	\begin{align*}
		[u^i] \frac{1}{1-zP(u)}.
	\end{align*}
By~\eqref{eq:unconstrainedpaths}, these are the generating functions
$W_i(z)$ for unconstrained walks starting at the origin and ending at
altitude $i$. For basketball walks, we have $P(u)=P(u^{-1})$, hence
$W_i(z) = W_{-i}(z)$. 
Using a straightforward contour integral argument, using Cauchy's
integral formula and the residue theorem, we have
$$
W_i(z)=
[u^i] \frac{1}{1-zP(u)}=\frac {1} {2\pi \sqrt{-1}}\int_{\mathcal C}
\frac{du}{u^{i+1}(1-zP(u))}
=z \left( \frac{u_1'(z)}{u_1^{i+1}(z)} + \frac{u_2'(z)}{u_2^{i+1}(z)} \right).
$$
Thus, we obtain the claimed expression for $W_i(z)$ in terms of the
small branches. Finally, the remaining factors in \eqref{eq:uW} are obtained by
simplifications in~\eqref{eq:G}. 
\end{proof}

Thus, by~\eqref{eq:G0k}, walks starting at the origin are given by complete homogeneous symmetric polynomials in the small branches. In particular, we have 
\begin{align} 
	G_{0,1}(z) &=u_1(z)+u_2(z),\label{eq:GGGG01} \\
	G_{0,2}(z) &=u_1^2(z)+u_1(z)u_2(z)+u_2^2(z).\label{eq:GG02}
\end{align}

We now derive an explicit expression for $G_{0,1}(z)$ and $G_{0,2}(z)$. 
Note that, as~\eqref{eq:GGGG01} is
 not defined on the negative real axis, we apply analytic continuation in order to derive an expression 
which is defined for every $|z|<\frac{1}{4}$, which is the radius of convergence of $G_{0,1}(z)$. 
The function $G_{0,1}(z)$ is an algebraic function since it is the sum of two algebraic functions (namely, $u_1(z)$ and $u_2(z)$).
Using a computer algebra package, it is easy to derive an algebraic equation for $G_{0,1}(z)$. 
For example, the following {\sl Maple} commands (see~\cite{SalvyZimmermann94} for more on these aspects) gives the desired equation:

\medskip
\begin{maplegroup}
\begin{mapleinput}
\mapleinline{active}{1d}{AllRoots:=allvalues(solve(1-z*P(u),u)):}{}\vspace{0.3\baselineskip}
\mapleinline{active}{1d}{u1:=AllRoots[2]: u2:=AllRoots[3]:}{}\vspace{0.3\baselineskip}
\mapleinline{active}{1d}{algeq:=algfuntoalgeq(u1+u2,u(z));}{}\vspace{0.3\baselineskip}
\end{mapleinput}
\end{maplegroup}
\vspace{-1.3\baselineskip}
\begin{align}
	z u^4+2z u^3 +(3z-1)u^2+(2z-1)u+z. \label{eq:G01eqalg}
\end{align}

\noindent
In particular, $G_{0,1}(z)$ is uniquely determined by the previous equation 
and the fact that its expansion at $z=0$ is a power series with non-negative coefficients. 
Solving this equation, we arrive at an analytic expression for $G_{0,1}(z)$ for $|z|<1/4$:
\begin{align} 
\label{eq:G01analytic}
	G_{0,1}(z)&= -\frac{1}{2} + \frac{1}{2} \sqrt{\frac{2 - 3z - 2 \sqrt{1 - 4z}}{z}}\\
&=z + z^2 + 3 z^3 + 7 z^4 + 22 z^5 + 65 z^6 + 213 z^7 + \cdots. 
\end{align}

%

\medskip
Using a computer algebra package again, we find that $G_{0,2}(z)$ satisfies
\begin{align} \label{eq:G02eq}
	z^3 u^4 -3 z^2 u^3 - (z^2-3z)u^2+(z-1)u+z=0.
\end{align}
Among its four branches, only one is a power series at $z=0$ with
non-negative coefficients, namely
\begin{align}
\label{eq:G02analytic}
	G_{0,2}(z)&=\frac{3-\sqrt{1-4 z}-\sqrt{2 + 12 z + 2\sqrt{1-4z}}}{4 z}\\
&= z+z^2+4z^3+9z^4+31z^5+ 91z^6+ 309z^7+\cdots.
\end{align} 

\smallskip
In order to undertake a small digression on complexity of computation:
these explicit forms are not the fastest way to access the coefficients.  
A better way is to take advantage of the theory of holonomic functions 
(as, e.g., implemented in the {\sl gfun} {\sl Maple} package,
see~\cite{SalvyZimmermann94}).  
To begin with, the kernel method gave us an algebraic equation. 
Applying the
derivative to both sides of this equation
and using the obtained new relations, 
we are led  
to a linear differential equation satisfied by the function $G(z)$
(where we write $G(z)$ instead of $G_{0,1}(z)$ for short): 
\begin{mapleinput}\mapleinline{active}{1d}{diffeq:=algeqtodiffeq(subs(u=G,algeq),G(z),{G(0)=0}):}{}\end{mapleinput}
$$\begin{cases}
 G(0)=0,\\ \vspace{0.9\baselineskip}
 6\,z+6+12\, \left( z+1 \right) G \left( z \right) +2\, \left( 162\,{z}^{3}+66\,{z}^{2}+z-3 \right) {\frac {d}{dz}}G \left( z \right) \\ \vspace{0.6\baselineskip}
+z \left( 9\,z+4 \right)  \left( 4\,z-1 \right)  \left( 6\,z+1 \right) { \frac {d^{2}}{d{z}^{2}}}G \left( z \right) =0 \label{eq:G01diffeq}
\end{cases}$$

\noindent
Then, extraction of $[z^n]$ on both sides of the differential equation 
yields a linear recurrence satisfied by the coefficients $g(n)$ of
$G$, namely
\begin{mapleinput}\mapleinline{active}{1d}{rec:=diffeqtorec(diffeq,G(z),g(n)):}{}\end{mapleinput}
$$\begin{cases}
  g(0) = 0, g(1) = 1, g(2) = 1\,,\\ \vspace{0.6\baselineskip}
0=108\,n \left( 2\,n+1 \right) g \left( n \right) +6\, \left( 13\,{n}^{2}+35\,n+24 \right) g \left( n+1 \right) \\ \vspace{0.3\baselineskip}
  - \left( 17\,{n}^{2}+49\,n+18 \right) g \left( n+2 \right) -2\, \left( 2\,n+7 \right)  \left( n+3 \right) g \left( n+3 \right) \,. \label{eq:G01rec}
\end{cases}$$

\smallskip
\noindent
From this recurrence, 
a binary splitting approach introduced by the Chudnovskys
gives a procedure which surprisingly computes $g(n)$ in only $O(\sqrt{n})$ operations (and $O(n \ln n \ln(\ln n ))$ bit complexity):
\begin{mapleinput}
\mapleinline{active}{1d}{g:=rectoproc(rec,g(n)):}{}\vspace{0.3\baselineskip}
\mapleinline{active}{1d}{g(10^5):  #a 6014-digits number computed in only 2 seconds!}{}
\end{mapleinput}

\medskip
The same approach applies to all our directed lattice path
models. This approach is much faster than the naive approach by means of dynamic
programming 
(which would compute the bivariate generating function, and would then
extract the desired $G(z)$ from it: this would cost $O(n^2)$ in time
and $O(n^3)$ in memory). 

\medskip
We just saw how to efficiently compute $g(n)$, for any given value of
$n$, but is there a closed-form formula holding for all $n$ at once? 
We now further investigate this question. 

\subsection{How to get a closed form for coefficients: Lagrange--B\"urmann inversion}
In Section~\ref{Section4}, we present a closed form for the numbers of
lattice walks with step polynomial
$P(u)=u^{-h}+u^{-h+1}+\dots+u^{h-1}+u^h$, for any $h$. 
In the case $h=2$ that we are dealing with in the current section, 
a nice miracle occurs: a more ad hoc approach
allows one to derive simpler expressions.

\subsubsection{Closed form for coefficients of $G_{0,1}(z)$}

The generating function $G_{0,1}(z)$ of walks starting at  the origin, ending at altitude~$1$, and never touching the $x$-axis,
satisfies the algebraic equation \eqref{eq:G01eqalg}. 
We rewrite it in the form
\begin{align*}
	G_{0,1}(z) + G_{0,1}^2(z) = z(1 + G_{0,1}(z) + G_{0,1}^2(z))^2.
\end{align*}
Here, substitution of $G_{0,1}(z) + G_{0,1}^2(z)$ by $C(z)-1$ gives the 
striking equation
\begin{align}
	\label{eq:LagrangeG01C}
	1+ G_{0,1}(z)+G_{0,1}^2(z)  &= C(z) ,
\end{align}
where $C(z)=1+zC(z)^2$ is the generating function for Catalan numbers. 
A recursive bijection for this identity was found by Axel Bacher and
(independently) by J\'er\'emie Bettinelli and \'Eric Fusy (personal
communication, see also~\cite{BettinelliFusy16}). 
It remains a challenge to find a more direct simple bijection.
This identity is the key to get nice closed-form expressions for the coefficients, via the following variant of Lagrange inversion. 

\begin{lemma}[\sc Lagrange--B\"urmann inversion variant] 
\label{lemma:LagrangeComp}
	Let $F(z)$ and $H(z)$ be two formal power series
satisfying the equations
	\begin{align}
		F(z) &= z \phi(F(z)), &
		H(z) &= z \psi(H(z)),
	\end{align}
	where $\phi(z)$ and $\psi(z)$ are formal power series such that $\phi(0) \neq 0$ and $\psi(0) \neq 0$.	Then,
	\begin{align}\label{LagrangeComp}
		[z^n]H(F(z)) &= \frac{1}{n} \sum_{k=1}^{n} 
\left([z^{k-1}] \psi^{k}(z)\right)\left( [z^{n-k}] \phi^n(z)\right).
	\end{align}
\end{lemma}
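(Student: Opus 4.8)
The plan is to compute $[z^n]H(F(z))$ by combining the classical Lagrange--B\"urmann formula (Theorem~\ref{thm:LagrangeBurmann}) with the defining equation for $H$. The key observation is that, since $H(z)=z\psi(H(z))$, the coefficients of $H$ itself are already accessible via Lagrange inversion, and we may expand $H(F(z))$ in powers of $F$ and then apply inversion a second time to the outer series $F$.

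First I would write $H(z)=\sum_{k\ge 1} h_k z^k$, where by Theorem~\ref{thm:LagrangeBurmann} applied to $H$ (with the monomial $z\mapsto z^k$ in the role of the test series) we have
\begin{align*}
h_k = [z^k] H(z) = \frac{1}{k}[z^{k-1}]\psi^k(z).
\end{align*}
Note $h_0=0$ since $H(0)=0$, which justifies starting the sum at $k=1$. Substituting, $H(F(z))=\sum_{k\ge 1} h_k F(z)^k$, and by linearity of coefficient extraction,
\begin{align*}
[z^n]H(F(z)) = \sum_{k\ge 1} h_k\,[z^n]F(z)^k.
\end{align*}

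Next I would evaluate $[z^n]F(z)^k$. Here I would apply the Lagrange--B\"urmann formula (Theorem~\ref{thm:LagrangeBurmann}) to $F$, taking the test series $H$ of that theorem to be the power $z\mapsto z^k$, so that its derivative is $k z^{k-1}$:
\begin{align*}
[z^n]F(z)^k = \frac{1}{n}[z^{n-1}]\bigl(k z^{k-1}\bigr)\phi^n(z) = \frac{k}{n}[z^{n-k}]\phi^n(z).
\end{align*}
Combining the two displays gives
\begin{align*}
[z^n]H(F(z)) = \sum_{k\ge 1} \frac{1}{k}\bigl([z^{k-1}]\psi^k(z)\bigr)\cdot\frac{k}{n}[z^{n-k}]\phi^n(z) = \frac{1}{n}\sum_{k\ge 1}\bigl([z^{k-1}]\psi^k(z)\bigr)\bigl([z^{n-k}]\phi^n(z)\bigr),
\end{align*}
and the factors $\frac{1}{k}$ and $k$ cancel, which is the pleasant simplification that makes the formula clean.

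The main point requiring care is the range of summation. The term $[z^{n-k}]\phi^n(z)$ vanishes whenever $n-k<0$, so the sum effectively terminates at $k=n$; this is why the stated formula runs over $1\le k\le n$ rather than all $k\ge 1$. I would also note that Theorem~\ref{thm:LagrangeBurmann} requires a \emph{nonzero} integer $n$, so the identity is asserted for $n\ge 1$, which is exactly the regime of interest. The only genuine subtlety—hardly an obstacle, but worth stating explicitly—is verifying that the interchange of summation and coefficient extraction is legitimate; this is immediate because, for each fixed $n$, only finitely many terms ($k=1,\dots,n$) contribute, so the sum is finite and no convergence issue arises.
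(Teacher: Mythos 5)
Your proof is correct, and it reaches the stated formula by a slightly different decomposition than the paper. The paper applies Lagrange--B\"urmann once to the composition $H(F(z))$ to get $\frac{1}{n}[z^{n-1}]H'(z)\phi^n(z)$, splits this by the Cauchy product into $\frac{1}{n}\sum_{k=1}^{n}\left([z^{k-1}]H'(z)\right)\left([z^{n-k}]\phi^n(z)\right)$, and only then uses a second Lagrange inversion to rewrite $[z^{k-1}]H'(z)=k[z^k]H(z)=[z^{k-1}]\psi^k(z)$. You instead expand $H$ as $\sum_{k\ge1}h_kz^k$ first, compute $[z^n]F^k(z)=\frac{k}{n}[z^{n-k}]\phi^n(z)$ by Lagrange inversion applied to each power of $F$, and let the factors $k$ and $1/k$ cancel. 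The two arguments are bookkeeping variants of the same double application of Theorem~\ref{thm:LagrangeBurmann}; yours makes the cancellation of the $k$'s more visible, while the paper's keeps the sum manifestly finite from the start (your remark on truncating at $k=n$ is needed precisely because you sum over all $k\ge1$). One small wording slip: to get $h_k=[z^k]H(z)=\frac{1}{k}[z^{k-1}]\psi^k(z)$ you are applying the theorem with the \emph{identity} $z\mapsto z$ as the test series (and index $k$), not with $z\mapsto z^k$; the latter is what you correctly use afterwards for $[z^n]F^k(z)$. The formula you wrote is nevertheless the right one, so this does not affect the argument.
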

\begin{proof}
By the Lagrange--B\"urmann inversion (Theorem~\ref{thm:LagrangeBurmann}),
we have
$$
	[z^n]H(F(z)) = \frac{1}{n} [z^{n-1}] H'(z) \phi^n(z).
$$
Now we apply the Cauchy product formula 
$[z^m] A(z) B(z) = \sum_{k=0}^m  a_k b_{m-k}$
with $m=n-1$, $A(z)=H'(z)$, and $B(z)= \phi^n(z)$. This leads to
\begin{align*}
[z^n]H(F(z))
	&= \frac{1}{n} \sum_{k=0}^{n-1} 
\left([z^k] H'(z)\right)\left(  [z^{n-1-k}] \phi^n(z)\right)   \\
&= \frac{1}{n} \sum_{k=1}^{n} 
\left([z^{k-1}] H'(z)\right)\left(  [z^{n-k}] \phi^n(z) \right).
\end{align*}
This gives Formula~\eqref{LagrangeComp}, after observing
   $
		[z^{k-1}]H'(z) = k [z^{k}] H(z) = [z^{k-1}] \psi^{k}(z)\,,
	$	
	where we used Lagrange--B\"urmann  inversion again. 
	\end{proof}

\begin{proposition}\label{prop:G011}
The number of basketball walks of length $n$ from the origin to
altitude~$1$ with steps in $\S =\{-2,-1,+1,+2\}$ 
and never returning to the $x$-axis equals 
	\begin{equation}\label{eq:G011}
 \frac{1}{n} \sum_{k=1}^{n} (-1)^{k+1} \dbinom{2k-2}{k-1} \dbinom{2n}{n-k}\\
		= \frac{1}{n}\sum_{i=0}^{n}\binom{n}{i}\binom{n}{2n+1-3i}\,. 
	\end{equation}
\end{proposition}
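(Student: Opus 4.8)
The plan is to prove the two displayed equalities one at a time, each by turning the coefficient extraction $[z^n]G_{0,1}(z)$ into a Lagrange--B\"urmann computation. For the \emph{first} equality I would build on the identity \eqref{eq:LagrangeG01C}, namely $1+G_{0,1}+G_{0,1}^2=C(z)$ with $C=1+zC^2$, together with the inversion variant of Lemma~\ref{lemma:LagrangeComp}. Writing $G:=G_{0,1}$ and $F(z):=C(z)-1$, note that $C-1=zC^2=z(1+F)^2$, so $F$ satisfies $F=z\phi(F)$ with $\phi(w)=(1+w)^2$ and $\phi(0)=1\neq0$. On the other hand, $1+G+G^2=C$ rewrites as $G(1+G)=F$; hence, letting $H$ be the power series with $H=t\,\psi(H)$, $\psi(w)=\frac{1}{1+w}$ (so that $H+H^2=t$ and $\psi(0)=1\neq0$), the series $H(F)$ solves $1+H(F)+H(F)^2=C$ and vanishes at the origin. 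Identifying this with the correct branch gives $G=H(F)$, which is the key input.

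Next I would feed $G=H(F)$ into Lemma~\ref{lemma:LagrangeComp}, which yields
\[
[z^n]G=\frac1n\sum_{k=1}^n\bigl([z^{k-1}]\psi^k(z)\bigr)\bigl([z^{n-k}]\phi^n(z)\bigr).
\]
Both inner coefficients are immediate: from $\psi^k(z)=(1+z)^{-k}$ one gets $[z^{k-1}](1+z)^{-k}=\binom{-k}{k-1}=(-1)^{k-1}\binom{2k-2}{k-1}$, and from $\phi^n(z)=(1+z)^{2n}$ one gets $[z^{n-k}](1+z)^{2n}=\binom{2n}{n-k}$. Substituting these and using $(-1)^{k-1}=(-1)^{k+1}$ reproduces $\frac1n\sum_{k=1}^n(-1)^{k+1}\binom{2k-2}{k-1}\binom{2n}{n-k}$, the first expression in \eqref{eq:G011}.

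For the \emph{second} equality I would argue differently, starting from $G_{0,1}=u_1+u_2$ in \eqref{eq:GGGG01} and rewriting the kernel equation \eqref{eq:kernel.equation}: multiplying $1-zP(u)=0$ by $u^2$ and factoring gives $u^2=z(1+u+u^3+u^4)=z(1+u)(1+u^3)$. By the conjugation principle of Proposition~\ref{prop:conjugation}, both small roots come from the single power series $u(X)$ solving $u=X\,\Theta(u)$ with $\Theta(u)=\bigl((1+u)(1+u^3)\bigr)^{1/2}$ and $X=\pm z^{1/2}$; the odd-index coefficients therefore cancel in $u_1+u_2$, leaving $[z^n]G_{0,1}=2\,[X^{2n}]u(X)$. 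Applying Theorem~\ref{thm:LagrangeBurmann} to $u=X\Theta(u)$ at index $2n$ (note $\Theta(X)^{2n}=((1+X)(1+X^3))^n$ is an honest power series because $2n$ is even) gives
\[
[z^n]G_{0,1}=2\cdot\frac{1}{2n}[u^{2n-1}]\bigl((1+u)(1+u^3)\bigr)^n=\frac1n[u^{2n-1}]\bigl((1+u)(1+u^3)\bigr)^n .
\]
Finally I would note that $(1+u)(1+u^3)=1+u+u^3+u^4$ has palindromic coefficients $(1,1,0,1,1)$, so its $n$-th power is palindromic of degree $4n$ and $[u^{2n-1}]=[u^{4n-(2n-1)}]=[u^{2n+1}]$; expanding $[u^{2n+1}](1+u)^n(1+u^3)^n=\sum_i\binom{n}{i}\binom{n}{2n+1-3i}$ then yields the second expression in \eqref{eq:G011}.

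The routine parts are the elementary coefficient extractions and the palindrome count. The genuinely delicate points are the \emph{branch identifications}: in the first argument one must verify that $H(F)$ is exactly the vanishing power-series solution of $1+G+G^2=C$ (and that substitution of the small branch is legitimate, as in the proof of Theorem~\ref{theo:luka}); in the second, the main obstacle is the careful bookkeeping of the conjugation principle, i.e.\ checking that only even powers of $X$ survive in $u_1+u_2$ and that the ensuing factor $2$ cancels the $\frac{1}{2n}$ from Lagrange inversion to leave the clean prefactor $\frac1n$. Once these are in place, both sums in \eqref{eq:G011} equal $n^{-1}$ times the same coefficient, hence each other.
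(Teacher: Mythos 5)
Your proposal is correct and follows essentially the same route as the paper: the first sum via the identity $1+G_{0,1}+G_{0,1}^2=C(z)$ combined with Lemma~\ref{lemma:LagrangeComp} applied to $\phi(w)=(1+w)^2$ and $\psi(w)=1/(1+w)$, and the second sum via $G_{0,1}=u_1+u_2$, the conjugation principle, and Lagrange--B\"urmann inversion on the kernel equation $u^2=z(1+u)(1+u^3)$. The only difference is cosmetic: you extract $[u^{2n-1}]$ and pass to $[u^{2n+1}]$ by palindromicity, whereas the paper states the $[u^{2n+1}]$ form directly; both give the same binomial sum.
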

\begin{proof}
Equation~\eqref{eq:LagrangeG01C} implies that
	$
	G_{0,1}(z) = H(C(z)-1),
	$
where $H(z)$ is the functional inverse of the polynomial $x^2+x$.
Thus $H(z) = z \psi(H(z))$, with $\psi(z) = \frac{1}{1+z}$. 
Furthermore, it is well-known that $C_0(z):=C(z)-1$ satisfies $C_0(z) = z \phi(C_0(z))$ with $\phi(z) = (1+z)^2$. 
Hence, Equation~\eqref{LagrangeComp} yields
	\begin{align}
	[z^n] G_{0,1}(z) &= \frac{1}{n} \sum_{k=1}^{n} 
\left([z^{k-1}] \frac{1}{(1+z)^k}\right)\left( [z^{n-k}] (1+z)^{2n}\right)\\
	&= \frac{1}{n} \sum_{k=1}^{n} (-1)^{k+1} \dbinom{2k-2}{k-1} \dbinom{2n}{n-k}\,.
	\end{align}
	
	The alternative expression without the $(-1)^{k+1}$ factors comes from 
	Formula~\eqref{eq:GGGG01}, 
	to which we apply the Lagrange--B\"urmann inversion formula for $u_1$, remembering that $u_1$ satisfies $u^2 = zu^2 P(u)$, and that the conjugation property of the small roots from Proposition~\ref{prop:conjugation} holds:
		\begin{align}
		[z^n] G_{0,1}(z)&=[z^n] (u_1(z)+u_2(z))
		=2[z^{n}] u_1(z)
		=\frac{1}{n}\sum_{k=0}^{n}\binom{n}{k}\binom{n}{2n+1-3k}\,. \qedhere
	\end{align}
\end{proof}


The last closed-form expression can also be explained 
via the so-called cycle lemma (cf.\ \cite[Ex.~5.3.8]{stan99}). Namely,
by~\eqref{eq:unconstrainedpaths} combined with the factorization
$u^{-2}+u^{-1}+u+u^2=u^{-2}(1+u^3)(1+u)$,
the number of unrestricted walks from $0$ to $1$ in $n$ steps is given
by 
\begin{align}
	[u^1z^n] W(z,u) = [u^1] P(u)^n = [u^1] \left(\frac{(1 + u^3) (1 + u) }{u^2} \right)^n
	=\sum_{i=0}^n\binom{n}{i}\binom{n}{2n+1-3i}.
\end{align}

\begin{figure}[!hb]
	\centering
	\includegraphics[scale=1.05]{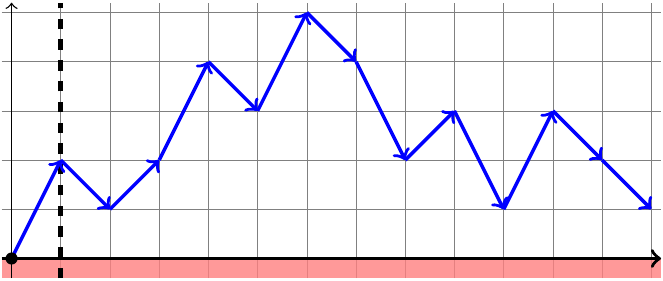} \qquad
	\includegraphics[scale=1.05]{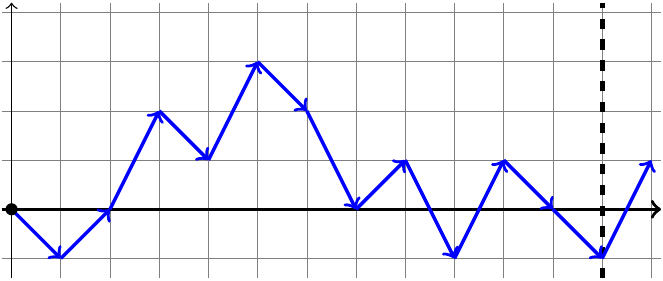}
	\caption{Transforming a walk counted by $G_{0,1}(z)$ 
into a walk counted by $W_{0,1}(z)$. }
	\label{fig:G01cycle}
\end{figure} 

From the formulas, we see that
$[z^n]G_{0,1}(z)=\frac{1}{n}[z^n]W_{0,1}(z)$. There exists indeed a
$1$-to-$n$ correspondence between walks counted by $G_{0,1}(z)$ and
those counted by $W_{0,1}(z)$.
	For each walk $\omega$ counted by $G_{0,1}(z)$, decompose $\omega$ into 
	$\omega=\omega_{\ell}B\omega_{r}$ 
	where $B$ is any point in the walk.
A new walk $\omega'$ counted by $W_{0,1}(z)$ is constructed by putting $B$ at the origin and adjoining $\omega_\ell$ at the end of $\omega_r$, i.e.,
	 $\omega'=B\omega_r\omega_\ell$, see Figure~\ref{fig:G01cycle}. 
	If $\omega$ is of length $n$, then there are $n$ choices for
        $B$. All these walks are different because there are no
        walks from altitude $0$ to altitude $1$ which are the concatenation
        of several copies of one and the same walk. 
(This is not true for walks from altitude~$0$ to 
altitude~$2$. For example, the walk $(0,2,1,3,2)$ is the 
concatenation of two copies of the walk $(0,2,1)$.)  
	
	Conversely, given a walk $\tau$ of length $n$ counted by
        $W_{0,1}(z)$, we decompose $\tau$ into 
	$\tau=\tau_{\ell}B\tau_{r},$
	where $B$ is the right-most minimum of $\tau$. Then,
	$\tau'=B\tau_{r}\tau_{\ell}$
	is a walk of length $n$ counted by $G_{0,1}(z)$. 
	

\subsubsection{Closed form for the coefficients of $G_{0,2}(z)$}

Recall that, by means of the kernel method, we derived a closed form
expression for the generating function $G_{0,2}(z)$ in 
\eqref{eq:G02analytic}.

\begin{proposition}\label{prop:G012}
The number of basketball walks of length $n$ from the origin to altitude~$2$ with steps in $\S =\{-2,-1,+1,+2\}$ and never returning to the $x$-axis equals 
	\begin{equation}
 \frac{1}{2n+1} \sum_{k=0}^{n+1} (-1)^{n+k+1} \dbinom{2n+1}{n+k} \dbinom{n+2k-1}{k}\,. \label{eq:G0222} 
	\end{equation}
\end{proposition}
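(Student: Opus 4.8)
\emph{Overall strategy.} The plan is to imitate the proof of Proposition~\ref{prop:G011}: rather than extracting coefficients from the radical form \eqref{eq:G02analytic} or the quartic \eqref{eq:G02eq} directly, I would first rewrite $G_{0,2}$ as an explicit expression in the Catalan generating function $C(z)=1+zC(z)^2$, and only then read off the coefficients by Lagrange--B\"urmann inversion. Write $p:=u_1+u_2=G_{0,1}$ and $t:=-u_1u_2$ for the two elementary symmetric functions of the small roots (both are genuine power series in $z$ by Proposition~\ref{prop:conjugation}). By \eqref{eq:GG02} one has $G_{0,2}=u_1^2+u_1u_2+u_2^2=p^2-u_1u_2=p^2+t$, so everything reduces to controlling the single extra series $t$, which is, up to the factor $z$, the excursion generating function.

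\emph{The Catalan substitution.} The conceptual heart is to express $p$ and $t$ through $C$. Since $P(u)=P(1/u)$, the two large roots of $1-zP(u)=0$ are $1/u_1$ and $1/u_2$; the Vieta relation stating that the sum of the four roots of $1-zP(u)=0$ equals $-1$ then reads $p-p/t=-1$, i.e. $p=t/(1-t)$. Combining this with the identity $1+p+p^2=C$ from \eqref{eq:LagrangeG01C} gives $C-1=p(1+p)=t/(1-t)^2$, equivalently the Catalan-type equation $t=(C-1)(1-t)^2$. Inverting this yields $t=\sum_{k\ge 1}(-1)^{k-1}C_k\,(C-1)^k$ with $C_k=\frac{1}{k+1}\binom{2k}{k}$, and since $C-1=zC^2$ I would substitute back to obtain the clean closed form $G_{0,2}=p^2+t=t\,C=\sum_{k\ge 1}(-1)^{k-1}C_k\,z^k\,C(z)^{2k+1}$ (the exact analogue of the striking identity \eqref{eq:LagrangeG01C}). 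A check against the first coefficients $z+z^2+4z^3+\cdots$ guards against sign errors.

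\emph{Coefficient extraction.} Applying Lagrange--B\"urmann (Theorem~\ref{thm:LagrangeBurmann}) to $C-1=z\bigl(1+(C-1)\bigr)^2$ gives the standard evaluation $[z^m]C^r=\frac{r}{2m+r}\binom{2m+r}{m}$. Using it termwise with $r=2k+1$, $m=n-k$, and the simplification $(2k+1)C_k=\binom{2k+1}{k}$, I get $[z^n]G_{0,2}=\frac{1}{2n+1}\sum_{k\ge 1}(-1)^{k-1}\binom{2k+1}{k}\binom{2n+1}{n-k}$. This is already of the predicted shape, with the factor $\frac{1}{2n+1}$ coming precisely from the Catalan normalization. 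It remains to match it with the stated sum \eqref{eq:G0222}: the two agree as functions of $n$ but not term by term (reindexing $k\mapsto n+1-k$ turns the claimed sum into $\frac{1}{2n+1}\sum_j(-1)^j\binom{2n+1}{j}\binom{3n+1-2j}{n+1-j}$), so the final move is a binomial-coefficient identity.

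\emph{Main obstacle.} The hard part is the bookkeeping at the two ends, not the inversion. First, one must make $p=t/(1-t)$ rigorous, i.e. argue that the large branches are exactly $1/u_1,1/u_2$ and that the relevant Vieta relation is legitimate inside the ring of Puiseux series. Second, and more demanding, one must certify the binomial identity that rewrites the clean sum in the precise form \eqref{eq:G0222}; I expect this to be settled by creative telescoping/Zeilberger, consistent with the computer-algebra methods used elsewhere in the paper. A fully computational alternative that bypasses the substitution is to convert the algebraic equation \eqref{eq:G02eq} into a linear ODE and hence a recurrence for $[z^n]G_{0,2}$, and then verify that the right-hand side of \eqref{eq:G0222} satisfies the same recurrence together with the matching initial values.
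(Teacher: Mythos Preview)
Your argument is correct and takes a genuinely different route from the paper's own proof. The paper does not use the Catalan substitution at all for $G_{0,2}$; instead it introduces an auxiliary series $F(z)$ via $-1/F(z)=G_{0,2}(z)-1/z$, shows that $F^2=zB(F)$ for an explicit $B$, and that $B=zA(B)$ with $A(z)=\frac{1}{1-z}-z$, then applies Lagrange--B\"urmann \emph{twice} along this chain. That produces directly $[z^n]G_{0,2}=\frac{1}{2n+1}[z^{n+1}]\bigl(\tfrac{1}{1-z}-z\bigr)^{2n+1}$, which upon binomial expansion is exactly the stated sum~\eqref{eq:G0222}, with no residual identity to verify.

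Your approach is conceptually cleaner and more parallel to Proposition~\ref{prop:G011}: the identity $G_{0,2}=tC$ with $t=-u_1u_2=zE(z)$ is very pretty and gives the extraction in one Lagrange step. The price is that it lands on the variant $\frac{1}{2n+1}\sum_{k\ge1}(-1)^{k-1}\binom{2k+1}{k}\binom{2n+1}{n-k}$, and you then owe a binomial identity to reach the specific form~\eqref{eq:G0222}. That step is routine (Zeilberger's algorithm certifies it; equivalently, both sides equal $\frac{1}{2n+1}[u^{n+1}](\frac{1}{1-u}-u)^{2n+1}$, which is precisely the intermediate quantity in the paper's proof), so this is not a genuine gap, only an extra line of work. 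In short: the paper's chain $F\to B\to A$ is more ad hoc but hits the target formula on the nose; your Catalan route is better motivated but yields an equivalent closed form and needs one more move to match the statement verbatim.
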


\begin{proof}
We define the series $F(z)$ by
\begin{equation} \label{eq:G02F} 
-\frac {1} {F(z)}=G_{0,2}(z)-\frac {1} {z}.
\end{equation}
It is straightforward to see from this equation that
$F(z)=z+z^3+\cdots$. The equation~\eqref{eq:G02eq} translates into
the equation
\begin{equation} \label{eq:Feq} 
(F^3(z)-zF(z))(1+F(z))+z^2=0
\end{equation}
for $F(z)$. We may rewrite this equation in the form
$$
\left(F^2-\frac {z} {2}\right)^2
=
\frac {z^2} {4}\cdot\frac {1-3F(z)} {1+F(z)}.
$$
Next we take the square root on both sides. In order to decide the
sign, we have to observe that $F^2(z)=z^2+\cdots$, hence
$$
F^2(z)-\frac {z} {2}
=
-\frac {z} {2}\sqrt{\frac {1-3F(z)} {1+F(z)}},
$$
or, equivalently, $F(z)$ satisfies $F^2(z)=zB(F(z))$, where
$$
B(z)=\frac {1} {2}\left(1
-\sqrt{\frac {1-3z} {1+z}}\right).
$$
It is straightforward to verify that $B(z)$ satisfies the equation
$B(z) = z A(B(z))$ with $A(z) = \frac{1}{1-z} - z$, and it is the
only power series solution of this equation. Hence,
for $n\ge1$, by \eqref{eq:G02F}, Lagrange--B\"urmann inversion 
(Theorem~\ref{thm:LagrangeBurmann}) with $H(z)=z^{-1}$, we have
\begin{equation*}
[z^n] G_{0,2}(z) = -[z^n]\frac {1} {F(z)}
=\frac {1} {n}[z^{n-1}] z^{-2}\left(\frac {B(z)} {z}\right)^n
=\frac {1} {n}[z^{2n+1}] B^n(z).
\end{equation*}
Now we apply Lagrange--B\"urmann inversion again, this time with
$F(z)$ replaced by $B(z)$, $n$ replaced by $2n+1$, 
and $H(z)=z^n$. This yields
\begin{align}
[z^n] G_{0,2}(z) &=
 \frac{1}{n(2n+1)} [z^{2n}] n z^{n-1} A^{2n+1}(z) \\
	        &= \frac{1}{2n+1} [z^{n+1}] \left(\frac{1}{1-z} - z\right)^{2n+1}\,.
\end{align}
By applying the binomial theorem, we then obtain
\begin{align}
	[z^n] G_{0,2}(z) &= \frac{1}{2n+1} [z^{n+1}] \sum_{k=0}^{2n+1} (-1)^{k+1} \dbinom{2n+1}{k} z^{2n+1-k} \left(\frac{1}{1-z}\right)^{k}.
\end{align} 
Since
\begin{align}
	\left(\frac{1}{1-z}\right)^{k} &= \sum_{\ell \geq 0} \dbinom{k+\ell-1}{\ell}z^{\ell}\,,
\end{align}
we get
\begin{align}
	[z^n] G_{0,2}(z) &= \frac{1}{2n+1} [z^{n+1}] \sum_{\ell \geq 0} \sum_{k=0}^{2n+1} (-1)^{k+1} \dbinom{2n+1}{k} \dbinom{k+\ell-1}{\ell} z^{2n+1-k+\ell}\\
	     &= \frac{1}{2n+1} \sum_{k=n}^{2n+1} (-1)^{k+1} \dbinom{2n+1}{k} \dbinom{2k-n-1}{k-n}\\
	     &= \frac{1}{2n+1} \sum_{k=0}^{n+1} (-1)^{n+k+1} \dbinom{2n+1}{n+k} \dbinom{n+2k-1}{k}\,,\label{eq:G02}
\end{align}
as desired.
\end{proof}

The idea of the above proof was to ``build up'' a chain of dependencies
between the actual series of interest, $G_{0,2}(z)$, and several
auxiliary series, namely the series $F(z)$, $B(z)$, and $A(z)$, so
that repeated application of Lagrange--B\"urmann inversion could be
applied to provide an explicit expression for the coefficients of the
series of interest.
This raises the question whether this example is just a coincidence,
or whether there exists a general method to transform a power series
into a Laurent series with the same positive part, and a ``nice''
algebraic expression, allowing multiple Lagrange--B\"urmann inversions to get
``nice'' closed forms for the coefficients. We have no answer to this
question and therefore leave this to future research.

\subsubsection{Closed form for the coefficients of basketball excursions}

Here, we enumerate basketball {\it excursions}, that is, basketball
walks which start at the origin, return to altitude~$0$, and in
between do not pass below the $x$-axis. A main difference to the
previously considered {\it positive} basketball walks is that
the excursions are allowed to touch the $x$-axis anywhere.

\begin{proposition}[\sc Enumeration of basketball excursions]
\label{prop:baskexc}
The number of basketball walks with steps in $\S =\{-2,-1,+1,+2\}$
of length $n$ from the origin to altitude $0$ never passing below
the $x$-axis is
	\begin{equation}
e_n := \frac{1}{n+1}\sum_{k=0}^{n}(-1)^{n+k}\binom{2n+2}{n-k}\binom{n+2k+1}{k}
		=\frac{1}{n+1}\sum_{i=0}^{\lfloor n/2 \rfloor}\binom{2n+2}{i}\binom{n-i-1}{n-2i}. \label{eq2}
	\end{equation}
\end{proposition}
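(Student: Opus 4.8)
The plan is to enumerate basketball excursions by the same ``chain-of-inversions'' strategy that worked for $G_{0,2}(z)$, but starting from the excursion generating function $E(z) = -u_1(z)u_2(z)/z$ that was already identified in the proof of Proposition~\ref{prop:G0k}. By Proposition~\ref{prop:conjugation}, the two small roots are conjugate under $z^{1/2} \mapsto -z^{1/2}$, so their product $u_1 u_2$ is a genuine power series in $z$ (all half-integer powers cancel). This gives $E(z) = -u_1(z)u_2(z)/z = \sum_n e_n z^n$ directly, and the task is to extract $[z^n]E(z)$ in closed form.

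First I would set up a convenient auxiliary series. Writing $V(z) := -u_1(z)u_2(z)$ so that $E(z) = V(z)/z$, I would use the kernel equation $u^2 = zu^2 P(u)$, equivalently $1 = z(u^{-2}+u^{-1}+u+u^2)$, to derive a single algebraic (or quadratic-radical) equation for $V(z)$ or for a closely related series. Concretely, the elementary symmetric functions $e_1 = u_1+u_2$ and $e_2 = u_1 u_2$ of the two small roots are related to the quartic $zu^4 + zu^3 + (\cdots)$ obtained from $u^2 - zu^2P(u)=0$ by Vieta-type relations; eliminating $e_1$ should produce a clean equation of the form $V^2 = z\,\tilde B(V)$ or $V = z\,\tilde\phi(V)$ with $\tilde\phi$ a rational or algebraic-radical function. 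This mirrors the step in Proposition~\ref{prop:G012} where $F^2 = zB(F)$ was obtained. Here I expect a branch with a square root $\sqrt{(1-3V)/(1+V)}$ or similar, coming from the two-small-root structure.

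Next I would peel off the radical by introducing a further series, say $B(z)$ satisfying $B = zA(B)$ with $A(z) = \frac{1}{1-z}-z$ (the very same $A$ as in the proof of Proposition~\ref{prop:G012}, which is plausible given that both formulas feature $\binom{n+2k\pm 1}{k}$ coming from expanding powers of $\frac{1}{1-z}-z$). Then repeated Lagrange--B\"urmann inversion (Theorem~\ref{thm:LagrangeBurmann}), applied first to recover $[z^n]E(z)$ as a constant times $[z^{m}]B^{n+1}(z)$ for an appropriate shift $m$, and a second time with $F$ replaced by $B$ and $H(z)=z^{n+1}$, reduces the problem to $\frac{1}{n+1}[z^{\bullet}]\bigl(\frac{1}{1-z}-z\bigr)^{2n+2}$. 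Expanding via the binomial theorem and the series $\bigl(\frac{1}{1-z}\bigr)^{k} = \sum_\ell \binom{k+\ell-1}{\ell}z^\ell$, exactly as in Proposition~\ref{prop:G012}, collecting the relevant coefficient of $z$ should yield the first sum in~\eqref{eq2}. The second expression would then be obtained either by the cycle-lemma / direct $[u^0]P(u)^{n+\text{const}}$ argument using the factorization $u^{-2}(1+u^3)(1+u)$ (as done just after Proposition~\ref{prop:G011}), or by an Abel-type binomial re-indexing.

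\textbf{The main obstacle} I anticipate is the very first algebraic step: converting the symmetric-function data $e_2 = u_1 u_2$ of \emph{two} small roots into a single inversion-ready equation of the form $V^2 = zB(V)$. Unlike the $G_{0,2}$ case, where $F$ was defined so as to linearize the quartic~\eqref{eq:G02eq} directly, here the natural object $E = -u_1u_2/z$ mixes both small roots, and the elimination of $e_1 = u_1+u_2$ against the quartic's coefficient relations must be done carefully so that the correct branch (the one with $E(z) = 1 + \cdots$, positive coefficients) is selected after taking the square root. Pinning down the sign of that radical and verifying that the resulting $A(z) = \frac{1}{1-z}-z$ genuinely appears (so that the two displayed sums match) is where the real work lies; once the equation $V^2 = zB(V)$ with $B = zA(B)$ is in hand, the rest is a mechanical double Lagrange--B\"urmann inversion plus a binomial expansion, completely parallel to Proposition~\ref{prop:G012}.
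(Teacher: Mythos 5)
Your overall strategy coincides with the paper's: take $E(z)=-u_1(z)u_2(z)/z$ from the kernel method, find its algebraic equation, recast that equation in Lagrangean form built out of $\frac{1}{1-z}-z$, invert, and expand binomially. The one substantive difference is that the step you flag as the main obstacle --- eliminating $u_1+u_2$ to reach a radical equation $V^2=z\tilde B(V)$ with an auxiliary series $\tilde B=zA(\tilde B)$, followed by a \emph{double} Lagrange--B\"urmann inversion --- is a detour that is not needed here. The quartic~\eqref{eq:poly.for.excursion} satisfied by $E$ rewrites directly as
\begin{equation*}
zE(z)\;=\;z\left(\frac{1}{1-zE(z)}-zE(z)\right)^{2},
\end{equation*}
that is, $Y=z\phi(Y)$ for $Y=zE$ with $\phi(t)=\bigl(\frac{1}{1-t}-t\bigr)^{2}$ already an explicit power series with $\phi(0)=1$. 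A \emph{single} application of Theorem~\ref{thm:LagrangeBurmann} then gives $[z^{n}]E(z)=\frac{1}{n+1}[z^{n}]\bigl(\frac{1}{1-z}-z\bigr)^{2n+2}$, and the binomial expansion produces the first sum in~\eqref{eq2}. Your hedge ``or $V=z\tilde\phi(V)$'' is the branch that actually materializes; the radical $\sqrt{(1-3F)/(1+F)}$ and the chain $F\to B\to A$ are specific to the $G_{0,2}$ computation of Proposition~\ref{prop:G012} and do not reappear for excursions.

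One concrete correction: your proposed route to the second, positive sum via the cycle lemma (extracting $[u^{0}]P(u)^{n+\mathrm{const}}$ with a $\frac{1}{n+1}$ prefactor) fails, because that correspondence requires $c=1$ (or at least a unique good rotation per cyclic class), which breaks for excursions with down-steps of size $2$; e.g.\ for $n=4$ one gets $\frac{1}{5}[u^{0}]P(u)^{5}=20$, whereas $e_{4}=11$. The paper obtains the positive sum instead by the elementary rewriting $\phi(z)=\bigl(1+\frac{z^{2}}{1-z}\bigr)^{2}$ before expanding, which is the fix you should adopt.
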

\begin{remark}
The first few values of the sequence defined by \eqref{eq2} are
$$	1, 0, 2, 2, 11, 24, 93, 272, 971, 3194, 11293, 39148, 139687, 497756,\dots$$
\end{remark}

\begin{proof}[Proof of Proposition \ref{prop:baskexc}]
	By the kernel method, we know that the generating function
        for excursions, $E(z)$ say, is given by
        $E(z)=-\frac{u_1u_2}{z}$, and that it satisfies the algebraic equation 
	\begin{align} 
			&z^4 E^4-(2z^3+z^2)E^3+(3z^2+2z)E^2-(2z+1)E+1=0\, \label{eq:poly.for.excursion}.
	\end{align}
Among the branches of this algebraic equation, only one has a power
series expansion. The equation may be rewritten in the form
$$
zE(z)= z\left(\frac 1{(1-zE(z))^2} -\frac{ 2zE(z)}{1-zE(z)} +
z^2E^2(z)\right)
=
 z\left(\frac 1{1-zE(z)} - zE(z)\right)^2.
$$
This shows that we may apply Lagrange--B\"urmann inversion 
(Theorem~\ref{thm:LagrangeBurmann}) with $\phi(z)= (\frac{1}{1-z}-z)^2$.
So we have
	\begin{align}
	[z^n]E(z)&
	=\frac{1}{n+1}[z^{n}]\phi^{n+1}(z)
	=\frac{1}{n+1}[z^{n}]\left(\frac{1}{1-z}-z\right)^{2n+2}\\
	&=\frac{1}{n+1}\sum_{k=0}^{n}(-1)^{n+k}\binom{2n+2}{n-k}\binom{n+2k+1}{k}.
	\end{align}
	It is possible to get an expression involving only positive
        summands by making use of the rewriting 
$\phi(z)=(1+\frac{z^2}{1-z})^2$. This leads to~\eqref{eq2}.
\end{proof}

The trick used in this proof can in fact 
be translated into an algorithm of wider use:

\leavevmode
\begin{center}
\fbox{\begin{minipage}{0.9\textwidth}
{\tt The ``Lagrangean scheme'' algorithm }\medskip

{\tt input:} an algebraic power series 
(given in terms of its algebraic equation $P(z,F)=0$, plus the first terms of the expansion of $F$, so that we can uniquely identify the correct branch of the equation) \medskip


{\tt output:} a ``Lagrangean equation'' satisfied by $F$\newline
(i.e., $H(z^aF) = z \phi(z^aF)$, where $z^a F$ has valuation\footnote{The valuation of a power series $\sum_{n\ge0}f_nz^n$ is the least~$n$ such that  $f_n\ne0$.}~1.)\medskip

{\tt way to process:} if we assume that $H=H_1/H_2$ and $\phi=\phi_1/\phi_2$ are rational functions,
then we identify them via an indeterminate coefficient approach, by 
substituting the {\em{polynomials}} $H_1, H_2, \phi_1, \phi_2$ in the equation $P(z,F)=0$.
\end{minipage}}
\end{center}

\medskip

This algorithm therefore provides a way to get multiple-binomial-sum representations.
See~\cite{Egorychev84,Xin04,BostanLairezSalvy15} for other approaches not relying on the algebraic nature of $F$, 
but designed for the class of functions which can be written as diagonals of rational functions
(these two classes coincide in the bivariate case).
For example, Formula~\eqref{eq2} for $e_n$ has the following alternative representation:
\begin{equation} (n+1) e_n=[t^n] \operatorname{diagonal} \left(
\frac{(1+u)^6 u t^2}{1- ( u (u+1)^2 t+u(1+u)^4  t^2)}+(u+1)^2
\right)\,. \end{equation} 
The rational function on the right-hand side has the striking feature that its bivariate
series expansion has only non-negative coefficients.
In fact, it is even a bivariate $\N$-rational function (i.e., a
function obtained as iteration of addition, multiplication, and
quasi-inverse,\footnote{The {\it quasi-inverse} of a power series
$f(z)$ of positive valuation is $1/(1-f(z))$.} 
starting from polynomials in $u$ and $t$ with positive integer coefficients). 
Given a multivariate rational function, it is a hard task to write it
as an $\N$-rational expression (an algorithm is known in the
univariate case), 
so some human computations were needed here to get the above expression. 
\smallskip

In fact (and we believe that it was not observed before), these
multivariate rational functions appearing in the computation of
diagonals related to nested sums of binomials 
are always $\N$-rational: this follows from the closure properties of
$\N$-rational functions. It is an open question to give a
combinatorial interpretation  
(in terms of the initial structure counted by the diagonal) of the
other diagonals of this rational function. 
It is also not easy to extrapolate from this rational function a
general pattern which could appear for more general sets of steps: 
we shall see in Section~\ref{Section4} which type of formulas generalize the
rich combinatorics that we had for $P(u)=u^{-2}+u^{-1}+u+u^2$.

\subsection{How to derive the corresponding asymptotics: singularity analysis}

We close this section by briefly addressing how to find the
asymptotics of numbers of basketball walks. Indeed,
standard techniques from singularity analysis suffice to get the
asymptotic growth of the coefficients of $z^n$ in the generating
functions that we consider here for $n \to \infty$. 
The interested reader is referred to~\cite{FlSe09} for more details on this subject (see Figure VI.7 therein for an illustration of singularity analysis).

\begin{theorem}
	Let $G_{0,1}(z)$ and $G_{0,2}(z)$ be the generating functions for positive basketball walks with steps $-2,-1,+1,+2$ starting at the origin and ending at altitude $1$, respectively at $2$. Then, as $n \to \infty$, the coefficients are asymptotically equal to
	\begin{align}
	[z^n]G_{0,1}(z) &= \frac{1}{\sqrt{5 \pi}} \frac{4^n}{n^{3/2}} \left( 1 - \frac{81}{200} \frac{1}{n} + O \left(\frac{1}{n^2} \right) \right)\,, \\
	[z^n] G_{0,2}(z) &= \frac{5+\sqrt{5}}{10 \sqrt{\pi}} \frac{4^n}{n^{3/2}} \left( 1 - \frac{201+24\sqrt{5}}{200} \frac{1}{n} + O \left(\frac{1}{n^2} \right) \right)\,.
	\end{align}
\end{theorem}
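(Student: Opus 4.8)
The plan is to obtain both asymptotic expansions by applying singularity analysis to the explicit algebraic closed forms \eqref{eq:G01analytic} and \eqref{eq:G02analytic}. First I would locate the dominant singularity. Both generating functions are built from $\sqrt{1-4z}$, which is singular at $z=1/4$, but one must check that the outer square roots (for instance $\sqrt{(2-3z-2\sqrt{1-4z})/z}$ in $G_{0,1}$) do not introduce a singularity of smaller modulus; a direct check shows that the radicand $2-3z-2\sqrt{1-4z}$ and its $G_{0,2}$ analogue stay nonzero and analytic on $|z|<1/4$, so the unique dominant singularity is $\rho=1/4$, consistent with the $4^n$ growth.

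Next I would compute the local (Puiseux) expansion of each function at $z=1/4$. Writing $z=\tfrac14(1-t)$ so that $\sqrt{1-4z}=\sqrt{t}$, I would substitute into \eqref{eq:G01analytic} and \eqref{eq:G02analytic} and expand the nested radicals in powers of $\sqrt{t}$. The leading singular behaviour will be of the form $G(z)=a_0 + a_1\sqrt{1-4z}+a_2(1-4z)+a_3(1-4z)^{3/2}+\cdots$; the exponent $3/2$ governing the $n^{-3/2}$ term arises because the analytic and $\sqrt{\cdot}$ contributions combine so that the first genuinely singular term carries a half-integer power $3/2$ rather than $1/2$ (the $\sqrt{1-4z}$ term alone would give $n^{-3/2}$ directly, but here one must carry the expansion to the order that produces the stated constants). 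The constants $\tfrac{1}{\sqrt{5\pi}}$ and $\tfrac{5+\sqrt5}{10\sqrt\pi}$ will emerge as the coefficient of the singular term times the standard transfer factor, and the $\sqrt5$ factors trace back to evaluating the inner expressions at $z=1/4$, where $2-3z=5/4$.

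Then I would invoke the standard transfer theorem from singularity analysis (Flajolet--Sedgewick, as cited in the excerpt): for a function with a singular expansion $c\,(1-z/\rho)^\alpha$ with $\alpha\notin\Z_{\ge0}$, one has $[z^n]\sim c\,\rho^{-n} n^{-\alpha-1}/\Gamma(-\alpha)$, and the full asymptotic expansion including the $1/n$ correction term is obtained by transferring each term of the Puiseux series. Concretely, I would transfer the $(1-4z)^{3/2}$ and $(1-4z)^{5/2}$ terms (using $\Gamma(-3/2)=\tfrac43\sqrt\pi$ and the corresponding subleading factors), which yields both the leading coefficient and the explicit $-\tfrac{81}{200}\tfrac1n$ and $-\tfrac{201+24\sqrt5}{200}\tfrac1n$ corrections, with the $O(1/n^2)$ remainder justified by the standard error bound once analyticity in a slit disc (a so-called Delta-domain) is verified.

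The main obstacle I anticipate is purely computational rather than conceptual: carefully expanding the nested square roots to the correct order in $\sqrt{t}$ while keeping track of all constants, since a single-term error propagates into the $1/n$ coefficient. The one genuine subtlety requiring care is the verification that no other singularity lies on $|z|=1/4$ and that each function admits analytic continuation to a Delta-domain, so that the transfer theorem with its error term legitimately applies; given the explicit algebraic forms this is routine but must be stated. I would therefore carry out the expansion with a computer algebra system and simply report the resulting constants, exactly as the surrounding computational sections of the paper advocate.
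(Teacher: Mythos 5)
Your overall strategy is exactly the paper's: take the explicit algebraic forms \eqref{eq:G01analytic} and \eqref{eq:G02analytic}, identify the dominant singularity at $z=1/4$, expand in a Puiseux series there, and apply the standard transfer theorems of Flajolet--Sedgewick (the paper cites Theorems VI.1 and VI.3 and records the expansions $G_{0,1}(z)=\frac{\sqrt5-1}{2}-\frac{2}{\sqrt5}\sqrt{1-4z}+O(1-4z)$ and $G_{0,2}(z)=(3-\sqrt5)-\frac{5+\sqrt5}{5}\sqrt{1-4z}+O(1-4z)$). Your remarks on verifying that the inner radicands do not vanish for $|z|<1/4$ and on continuation to a Delta-domain are sound and, if anything, more careful than the paper's write-up.

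However, there is one concrete error in the middle of your argument that would derail the computation if followed literally. You assert that ``the first genuinely singular term carries a half-integer power $3/2$ rather than $1/2$'' and that you would ``transfer the $(1-4z)^{3/2}$ and $(1-4z)^{5/2}$ terms \dots which yields both the leading coefficient and the explicit $1/n$ corrections.'' This is backwards: transferring $(1-4z)^{3/2}$ produces a contribution of order $4^n n^{-5/2}$ (via $\Gamma(-3/2)=\tfrac43\sqrt\pi$), and $(1-4z)^{5/2}$ gives $4^n n^{-7/2}$, so neither can supply the leading $n^{-3/2}$ term. The coefficient $a_1$ of $\sqrt{1-4z}$ is \emph{nonzero} here ($a_1=-2/\sqrt5$ for $G_{0,1}$), and it is this term, transferred via $[z^n](1-4z)^{1/2}\sim 4^n n^{-3/2}/\Gamma(-1/2)$ with $\Gamma(-1/2)=-2\sqrt\pi$, that produces the stated leading constants $\tfrac{1}{\sqrt{5\pi}}$ and $\tfrac{5+\sqrt5}{10\sqrt\pi}$. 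The $1/n$ correction then comes from two sources that must both be kept: the second-order term in the full asymptotic expansion of $[z^n](1-4z)^{1/2}$ itself, and the transferred $(1-4z)^{3/2}$ term; the integer powers of $(1-4z)$ are polynomials and contribute nothing. With that correction, your plan coincides with the paper's proof.
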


\begin{proof}
	The asymptotic growth of the coefficients is governed by the
        location of the dominant singularity (the singularity closest
        to the origin). The dominant singularity 
of~\eqref{eq:G01analytic} and~\eqref{eq:G02analytic} is
        given by $1/4$, since the square root becomes singular at this point.
	
	Next, we compute the singular expansion for $z\to 1/4$, which is
a Puiseux series: 
	\begin{align}
	G_{0,1}(z) &= -\frac{1-\sqrt{5}}{2} - \frac{2}{\sqrt{5}} \sqrt{1-4z} + O\left(1-4z\right),\\
	G_{0,2}(z) &= \left(3-\sqrt{5}\right) - \frac{5+\sqrt{5}}{5} \sqrt{1-4z} + O\left(1-4z\right).
	\end{align}
	Finally, we apply the standard function scale from~\cite[Theorem~VI.1]{FlSe09} and the transfer for the error term~\cite[Theorem~VI.3]{FlSe09} to get the asymptotics. 
\end{proof}

More generally, asymptotics for the number of walks from altitude $i$ to altitude $j$ in $n$ steps can be obtained via singularity analysis of the small roots,
similarly to what was done in~\cite{BaFl02}.
Note that it is easy derive as many terms as needed in the asymptotic expansion of the coefficients
by including more terms in the Puiseux expansion.
We also want to point out that this process was implemented 
in {\sl SageMath} (see~\cite{Kauers15}) or in {\sl
  Maple} by Bruno Salvy (as a part of the {\tt algolib} package). There, the {\tt
  equivalent} command directly gives the above result:
\begin{maplegroup}
\begin{mapleinput}
\mapleinline{active}{1d}{equivalent(G01,z,n,3);
}{}
\end{mapleinput}
\mapleresult
\begin{maplelatex}
\mapleinline{inert}{2d}{}{\[\displaystyle \frac{1}{5}\,{\frac { \sqrt{5}\, \, {4}^{n}}{ \sqrt{\pi }\, {n}^{3/2}}}-{\frac {81}{1000}}\,{\frac { \sqrt{5}\, \, {4}^{n}}{ \sqrt{\pi }\, {n}^{5/2}}}\\
\mbox{}+O \left( {\frac {{4}^{n}}{{n}^{7/2}}} \right) \,.\]}
\end{maplelatex}
\end{maplegroup}

\section{General case: Lattice walks with arbitrary steps}\label{Section4}

We first prove a theorem which holds for any symmetric set of steps, i.e., when the step polynomial satisfies $P(u) = P(1/u)$.

\begin{theorem}[\sc Positive walk enumeration]
	\label{prop:G0kgeneral} 
Consider walks with a symmetric step polynomial $P(u)$.
	Let $G_{0,k}(z)$ be the generating function for positive
        walks,
i.e., walks starting at the origin, ending at altitude~$k$, and always
staying strictly above the $x$-axis in-between,
	and let $M_{> 0}(z)$ be the generating function of positive 
meanders, i.e., positive walks ending at any altitude $>0$. 	
Then
	\begin{align} 
			M_{> 0}(z)  &= \sum_{k>0} G_{0,k}(z)=\prod_{i=1}^h \frac{1}{1-u_i(z)}, \label{eq:posmeanders}\\
		G_{0,k}(z) &= h_k\left(u_1(z),u_2(z), \ldots, u_h(z) \right), \label{eq:G0kgeneral} 
	\end{align}		
	where $u_1(z),u_2(z),\dots,u_h(z)$ are the small roots of
        the kernel equation $1-zP(u)=0$, and
	\begin{align*} 
		h_k(x_1,x_2,\ldots,x_h) &= 
\underset{i_1 +\dots+ i_h =k}{\sum_{i_1,\dots,i_h\ge0}}
 x_1^{i_1} x_2^{i_2} \cdots x_h^{i_h}
	\end{align*}
	is the complete homogeneous symmetric polynomial of degree $k$ in the variables $x_1,x_2,\break\ldots,x_h$. 
\end{theorem}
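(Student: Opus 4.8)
The plan is to generalize the kernel-method argument that produced Theorem~\ref{theo:luka} and Proposition~\ref{prop:G0k} to an arbitrary symmetric step polynomial $P(u)$ with $c=d=h$. First I would set up the fundamental functional equation for the generating function $G_0(z,u)=\sum_{n,k}G_{0,n,k}z^nu^k$ of positive walks starting at the origin. A positive walk is either the initial point or a positive walk extended by one step that does not drop to altitude $0$ or below. Since steps range in $\{-h,\dots,h\}$, the ``forbidden'' part at low altitudes involves the coefficients $G_{0,1}(z),\dots,G_{0,h}(z)$, so the functional equation takes the shape
\begin{equation*}
(1-zP(u))\,G_0(z,u)=u\cdot(\text{correction terms involving }G_{0,1},\dots,G_{0,h}),
\end{equation*}
where the right-hand side is a Laurent polynomial in $u$ of the variables $G_{0,i}(z)$. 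The kernel $K(z,u)=1-zP(u)$ now has, by Newton--Puiseux as in Theorem~\ref{theo:luka}, exactly $h$ small roots $u_1,\dots,u_h$ (tending to $0$ as $z\to0$) and $h$ large roots.

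The heart of the method is to substitute each small root into the functional equation, which kills the left-hand side and yields $h$ linear equations in the $h$ unknowns $G_{0,1}(z),\dots,G_{0,h}(z)$. Solving this linear system (a Vandermonde-type system in the $u_i$) determines all the boundary unknowns, and back-substitution gives a closed form for $G_0(z,u)$. To reach the stated formulas I would then read off $G_{0,k}(z)=[u^k]G_0(z,u)$. The cleanest route to the symmetric-function identities \eqref{eq:posmeanders}--\eqref{eq:G0kgeneral} is to observe that the kernel factors as
\begin{equation*}
1-zP(u)=-z\,u^{-h}\prod_{i=1}^h(u-u_i(z))\prod_{i=1}^h(u-v_i(z)),
\end{equation*}
where the $v_i$ are the large roots; comparing with Vieta's formulas and exploiting the symmetry $P(u)=P(1/u)$ (which forces the large roots to be reciprocals $v_i=1/u_i$) should collapse the expressions into complete homogeneous symmetric polynomials $h_k(u_1,\dots,u_h)$. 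For the meander total $M_{>0}(z)=\sum_{k>0}G_{0,k}(z)$, summing the generating-function identity $\sum_{k\ge0}h_k(u_1,\dots,u_h)\,t^k=\prod_{i=1}^h(1-tu_i)^{-1}$ at $t=1$ gives precisely $\prod_{i=1}^h(1-u_i)^{-1}$, and subtracting the $k=0$ term $h_0=1$ accounts for the strict inequality.

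The main obstacle I expect is twofold. First, one must verify carefully that substituting the small branches into the functional equation is legitimate and that the resulting $h\times h$ linear system is nonsingular; this is where the ``stay in the integral domain of Puiseux series'' caveat from the proof of Theorem~\ref{theo:luka} becomes delicate, since for $h\ge2$ the correction terms and the inversion of the coefficient matrix are genuinely more involved than in the $h=1$ case. Second, and more substantively, extracting $[u^k]$ of the solved $G_0(z,u)$ and recognizing the result as $h_k(u_1,\dots,u_h)$ requires the residue/partial-fraction identity that expresses $[u^k](1-zP(u))^{-1}$ in terms of the small roots (the analogue of the contour-integral computation in Proposition~\ref{prop:Gjk}), together with the reciprocal-root symmetry. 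Once the factorization of the kernel and the reciprocity $v_i=1/u_i$ are in hand, the symmetric-function bookkeeping is routine, so I would concentrate the effort on establishing those two structural facts cleanly.
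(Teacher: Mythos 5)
Your outline is sound, but it takes a genuinely different --- and more self-contained --- route than the paper. The paper never sets up or solves the $h\times h$ kernel system: for \eqref{eq:posmeanders} it combines the known product formulas from Banderier--Flajolet for meanders, $M_{\geq 0}(z)=-\frac{1}{z}\prod_{i=1}^h(1-v_i(z))^{-1}$, and for excursions, $E(z)=\frac{(-1)^{h-1}}{z}\prod_{i=1}^h u_i(z)$, with the combinatorial factorization $M_{\geq 0}(z)=E(z)\,M_{>0}(z)$ (every meander is an excursion followed by a strictly positive tail) and the reciprocity $v_i=1/u_i$ forced by $P(u)=P(1/u)$; the formula \eqref{eq:G0kgeneral} is then simply quoted from another reference. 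Your direct derivation would, if completed, reprove those imported facts. It does work, and there is a cleaner way to finish it than inverting a Vandermonde-type system: the functional equation $(1-zP(u))G_0(z,u)=1-z\{u^{\le 0}\}\big(P(u)G_0(z,u)\big)$ has right-hand side $u^{-h}N(u)$ with $N$ a polynomial of degree $h$ in $u$ vanishing at each small root, so $N(u)=\lambda(z)\prod_i(u-u_i)$; dividing by the kernel factorization $u^h(1-zP(u))=-zp_h\prod_i(u-u_i)(u-v_i)$ and using $v_i=1/u_i$ collapses everything to $G_0(z,u)=C(z)\prod_i(1-u_i(z)u)^{-1}$, and $G_{0,0}=1$ forces $C=1$. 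Extracting $[u^k]$ gives \eqref{eq:G0kgeneral} and setting $u=1$ gives \eqref{eq:posmeanders} in one stroke. What the paper's route buys is brevity at the price of two external citations; what yours buys is a self-contained argument and the master identity $G_0(z,u)=\prod_i(1-u_i(z)u)^{-1}$.

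One correction, though: do not subtract the $k=0$ term at the end. The product $\prod_i(1-u_i(z))^{-1}$ has constant term $1$ and does count the empty walk --- compare the listed sequences for positive meanders, which all begin with $1$, and the paper's own relation $M_{>0}=M_{\geq0}/E$, which equals $1$ at $z=0$. Subtracting $h_0=1$ ``to account for the strict inequality'' would yield $\prod_i(1-u_i)^{-1}-1$, contradicting \eqref{eq:posmeanders}; the sum over $k>0$ in the statement is best read as a sum over $k\ge 0$ with $G_{0,0}=1$ contributing the empty walk.
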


\begin{proof}
The formula for positive meanders follows from the expression for meanders (which are allowed to touch the $x$-axis!)	in~\cite[Corollary~1]{BaFl02},
	\begin{align*}
M_{\geq 0 }(z)=		-\frac{1}{z}\prod_{i=1}^h \frac{1}{1-v_i(z)}\,,
	\end{align*}
where $v_1(z),v_2(z),\dots,v_h(z)$ are the large roots of $1-zP(u)=0$,
i.e., those roots $v(z)$ for which $\lim_{z\to0}\vert v(z)\vert=\infty$.
Every meander starts with an initial excursion, and later never
returns to the $x$-axis any more. This simple fact implies the generating
function equation $M_{\geq 0}(z)=E(z) M_{>0}(z)$. Hence, we need
to divide the above expression for $M_{\ge0}(z)$ by the generating function for
excursions --- which, by~\cite[Theorem~2]{BaFl02}, is given by
$$E(z) = \frac{(-1)^{h-1}}{z} \prod_{i=1}^h u_i(z).$$ 
Finally, due to $P(u)=P(u^{-1})$, we have $u_i(z) = 1/v_i(z)$, which gives the final expression for $M_{>0}$, while the formula for $G_{0,k}(z)$ is proven in~\cite{BaWa16b}. 
\end{proof}

This proof shows, in particular, that generating functions for strictly
positive walks, respectively for weakly positive walks, are intimately
related, and are therefore given by similar expressions. 
(The price of positivity is a division by $E(z)$, which encodes the
excursion prefactor.) 
The proof also extends to non-symmetric steps, but then the formulas involve one more factor. It is possible to deal with them exactly in the way we proceed for symmetric steps, but this leads to slightly less nice formulas.

\medskip
In the sequel, we focus on positive walks with symmetric steps.
We show in which way we can use the obtained expressions for the generating functions in order 
to get nice closed-form expressions for their coefficients.

\subsection{Counting walks with steps in  \texorpdfstring{$\S=\{0,\pm1,\dots,\pm h\}$}{S=\{-h,-h+1,\dots,h-1,h\}}}

In Section~\ref{Section3} on basketball walks, we had a taste of what the kernel method could do for us when combined with Lagrange--B\"urmann inversion. This was, however, only for the case $\mathcal{S}=\{\pm1,\pm2\}$. In this section, we illustrate again the power of the kernel method, when applied to more general step sets~$\mathcal{S}$. We first start with a generalization of Section~\ref{Section2} to $\mathcal{S}=\{0,\pm1,\dots,\pm h\}$.
In order to have a convenient notation, we introduce
{\em $m$-nomial coefficients} by defining
\begin{equation}
 \binom{n}{k}_m := [ u^k ]	(1+u+\cdots+u^{m-1})^n\,,
\end{equation}
where $k$ is between $0$ and $(m-1) n$.

%

\begin{proposition} 
The $m$-nomial coefficient equals
\begin{equation}\label{mnomial}
	\binom{n}{k}_m=\sum_{i=1}^n(-1)^i\binom{n}{i}\binom{n+k-mi-1}{n-1}.
\end{equation}
\end{proposition}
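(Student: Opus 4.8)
The plan is to read the $m$-nomial coefficient off a factored generating function. Since $1+u+\cdots+u^{m-1}=\frac{1-u^m}{1-u}$, we have
\begin{equation*}
\binom{n}{k}_m=[u^k]\left(\frac{1-u^m}{1-u}\right)^{\!n}=[u^k]\,(1-u^m)^n\,(1-u)^{-n}.
\end{equation*}
First I would expand each factor by the (generalized) binomial theorem, namely $(1-u^m)^n=\sum_{i=0}^{n}(-1)^i\binom{n}{i}u^{mi}$ and $(1-u)^{-n}=\sum_{j\ge0}\binom{n+j-1}{n-1}u^j$, and then take the Cauchy product. Extracting the coefficient of $u^k$ forces $mi+j=k$, i.e.\ $j=k-mi\ge0$; collecting the surviving terms gives a signed binomial sum whose summand is exactly $(-1)^i\binom{n}{i}\binom{n+k-mi-1}{n-1}$, which is the summand displayed in~\eqref{mnomial}. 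The constraint $j\ge0$ shows that only the indices $i$ with $0\le mi\le k$ contribute.

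As an independent derivation (and to make the combinatorial meaning transparent) I would re-obtain the same identity by inclusion--exclusion. Here $\binom{n}{k}_m$ counts the sequences $(a_1,\dots,a_n)$ with $0\le a_j\le m-1$ and $a_1+\cdots+a_n=k$. Dropping the upper bounds leaves the weak compositions of $k$ into $n$ nonnegative parts; for a fixed set of $i$ indices on which one imposes $a_j\ge m$, the substitution $a_j\mapsto a_j-m$ identifies those configurations with weak compositions of $k-mi$, counted by $\binom{n+k-mi-1}{n-1}$. Summing the signed contributions of the $\binom{n}{i}$ choices of such index sets reproduces the same signed binomial sum, again with the natural truncation $mi\le k$.

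The calculation is routine, so I expect the only delicate point --- and the main thing to get right in order to land exactly on~\eqref{mnomial} --- to be the bookkeeping of the summation range together with the convention attached to the inner binomial. The derivation produces non-vanishing terms precisely for $0\le i\le\lfloor k/m\rfloor$, and one must read $\binom{n+k-mi-1}{n-1}$ as the coefficient $[u^{k-mi}](1-u)^{-n}$ (equivalently, as a count of weak compositions), so that it vanishes as soon as $mi>k$; interpreting the same symbol through the polynomial/upper-negation convention would revive those terms and break the equality. Reconciling this contributing range with the summation limits written in~\eqref{mnomial} is exactly the final check I would carry out.
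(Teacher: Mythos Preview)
Your argument is exactly the paper's: write $1+u+\cdots+u^{m-1}=(1-u^m)/(1-u)$, expand $(1-u^m)^n$ and $(1-u)^{-n}$ binomially, and read off $[u^k]$. The inclusion--exclusion rederivation is a pleasant extra not present in the paper.

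Your instinct about the range is correct and worth stating plainly: both your computation and the paper's own proof yield the sum beginning at $i=0$; the lower limit ``$i=1$'' printed in~\eqref{mnomial} is a typo (the $i=0$ term $\binom{n+k-1}{n-1}$ is certainly needed). The paper also makes explicit the convention you flag --- that $\binom{a}{b}=0$ whenever $a<0$ or $b>a$ --- which is what justifies extending the upper limit all the way to $n$ and kills the terms with $mi>k$.
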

\begin{proof}
Coefficient extraction in the defining expression for $\binom nk_m$ yields
\begin{align}
	\binom{n}{k}_m
	&=[u^k](1+u+\cdots+u^{m-1})^n
	=[u^k](1-u^{m})^n\dfrac{1}{(1-u)^n}\\
	&=[u^k]\left( \sum_{i=0}^n\binom{n}{i}(-1)^iu^{mi} \right) 
	  \left( \sum_{j\ge0}^{}\binom{n+j-1}{n-1}u^j\right) \\
	&=\sum_{i=0}^{\left\lfloor (n+k-1)/m \right\rfloor} (-1)^i\binom{n}{i}\binom{n+k-mi-1}{n-1}. 
	\end{align}
	The upper bound in the sum can be taken more naturally to be $i=n$, using the convention that binomials $\binom{n}{k}$
	are $0$ for $n<0$ or $k>n$ (the reader should be warned that this is not the convention of {\sl  Maple} or {\sl Mathematica}).  
	This gives Formula~\eqref{mnomial}.
\end{proof}

{\sl Historical remark.} These $m$-nomial coefficients appear in more than fifty articles (many of them focusing on trinomial coefficients) dealing with their rich combinatorial aspects 
(see e.g.~\cite{AndrewsBaxter87,Renzi92,Andrews07,Blaziak08}). We use
the notation $\binom{n}{k}_m$ promoted by George
Andrews~\cite{Andrews90}. It should be noted that
they were previously called {\it polynomial coefficients} 
by Louis Comtet~\cite[p.~78] {Comtet74}, who is mentioning early work
of D\'esir\'e Andr\'e (with a typo in the date)  
and Paul Montel~\cite{Andre1876,Montel1942},
and who was himself using another notation for these numbers, namely $\binom{n,m}{k}$. 

\smallskip These coefficients have a direct combinatorial interpretation in terms of lattice walk enumeration.

\begin{theorem}[\sc Unconstrained walk enumeration]
The number of unconstrained\/\footnote{Unconstrained means that
  the walks are allowed to have both positive and negative altitudes.}
walks running from the origin to altitude $k$ in $n$ steps taken from 
$\{0,\pm1,\pm2,\break\dots,\pm h\}$ equals $\binom{n}{k+hn}_{2h+1}$.
\end{theorem}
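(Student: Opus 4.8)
The plan is to compute the generating function for unconstrained walks directly and then read off the coefficient as an $m$-nomial coefficient. First I would recall from Equation~\eqref{eq:unconstrainedpaths} that the bivariate generating function for unconstrained walks is $W(z,u) = \frac{1}{1-zP(u)}$, where for the step set $\{0,\pm1,\dots,\pm h\}$ the step polynomial is
\begin{equation*}
P(u) = u^{-h} + u^{-h+1} + \cdots + u^{-1} + 1 + u + \cdots + u^{h} = \sum_{i=-h}^{h} u^{i}.
\end{equation*}
The number of walks of length $n$ from the origin to altitude $k$ is then $W_{n,k} = [u^k z^n] W(z,u) = [u^k]\,P(u)^n$, since expanding the geometric series $\frac{1}{1-zP(u)}$ contributes exactly $P(u)^n$ at order $z^n$.

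The key observation is that $P(u)$ is, up to a shift, a power of the simplest symmetric polynomial. I would factor out $u^{-h}$ to write
\begin{equation*}
P(u) = u^{-h}\bigl(1 + u + u^2 + \cdots + u^{2h}\bigr) = u^{-h}\sum_{j=0}^{2h} u^{j}.
\end{equation*}
Raising to the $n$-th power gives $P(u)^n = u^{-hn}\bigl(1+u+\cdots+u^{2h}\bigr)^n$, so that
\begin{equation*}
W_{n,k} = [u^k]\,P(u)^n = [u^{k+hn}]\,\bigl(1+u+\cdots+u^{2h}\bigr)^n.
\end{equation*}
By the very definition of the $m$-nomial coefficient, namely $\binom{n}{\ell}_m = [u^\ell](1+u+\cdots+u^{m-1})^n$, the right-hand side is precisely $\binom{n}{k+hn}_{2h+1}$, because $2h+1$ is the number of terms in $1+u+\cdots+u^{2h}$ (so $m-1 = 2h$). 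This establishes the claim.

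Since the argument is essentially a two-line generating-function manipulation, I do not expect a genuine obstacle, but I would take care with two bookkeeping points. First, I must verify the range condition: the $m$-nomial coefficient $\binom{n}{\ell}_m$ is only defined (nonzero) for $0 \le \ell \le (m-1)n$, and here $\ell = k+hn$ ranges over $0 \le k+hn \le 2hn$, i.e.\ $-hn \le k \le hn$, which matches exactly the altitudes reachable by $n$ steps of maximal size $\pm h$; outside this range both sides vanish, consistent with the convention on binomials adopted in the preceding proposition. Second, I would confirm the indexing of the shift, checking that factoring $u^{-h}$ (not $u^{-hn}$ prematurely) and then taking the $n$-th power produces the shift $u^{-hn}$ correctly. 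With these checks in place the identity $W_{n,k} = \binom{n}{k+hn}_{2h+1}$ follows immediately.
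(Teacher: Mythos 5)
Your proof is correct and follows exactly the paper's own argument: expand $W(z,u)=1/(1-zP(u))$ to get $[u^k]P^n(u)$, factor $P(u)=u^{-h}(1+u+\cdots+u^{2h})$, and identify the shifted coefficient with the $(2h+1)$-nomial coefficient. The additional bookkeeping checks on the range of $k+hn$ are sound but not needed beyond what the paper states.
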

\begin{proof}
By~\eqref{eq:unconstrainedpaths},
the generating function for unconstrained walks is $$W(z,u)=\frac{1}{1-zP(u)} = \sum_{n=0}^\infty P^n(u)z^n\,.$$ 
Then a simple factorization shows that
\begin{align} 
	[u^k]P^n(u)
	&=[u^k]\left(\sum_{i=-h}^{h}u^i\right)^n
	=[u^k]u^{-hn}\left(\sum_{i=0}^{2h}u^i\right)^n=\binom{n}{k+hn}_{2h+1}. \label{eq:steppoly}\qedhere
\end{align}
\end{proof}

Now we will see how to link these coefficients with {\em constrained\/}
lattice walks. To this end, we first state the general version of the
conjugation principle that we encountered in 
Proposition~\ref{prop:conjugation}. 

\begin{proposition}[\sc Conjugation principle for small roots]
\label{prop:conjugationgeneral}
Let 
$$P(u) = \sum_{i=-c}^d p_i u^i$$
be the step polynomial, and let $\omega = e^{ 2 \pi i / c}$ be a
$c$-th root of unity. 
The small roots $u_i(z)$, $i=1,2,\ldots,c$, of\/ $1-zP(u)=0$ 
satisfy
\begin{equation}
u_i(z)=\sum_{n\geq 1} \omega^{n (i-1)} a_n z^{n/c}
\end{equation}
for certain ``universal" coefficients $a_n$, $n=1,2,\dots$.
\end{proposition}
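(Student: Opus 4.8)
The plan is to generalize the argument used in Proposition~\ref{prop:conjugation}, where the case $c=2$ was handled by isolating a factor $z^{1/2}$ and appealing to uniqueness of a formal power series solution. First I would rewrite the kernel equation $1-zP(u)=0$ in a form that exposes the lowest step $u^{-c}$. Multiplying through by $u^c$, we get
\begin{equation*}
u^c = z\, u^c P(u) = z\sum_{i=-c}^{d} p_i\, u^{\,c+i} = z\,Q(u),
\end{equation*}
where $Q(u) = u^c P(u) = \sum_{i=-c}^{d} p_i u^{c+i}$ is an honest polynomial with nonzero constant term $Q(0)=p_{-c}\neq0$. Taking $c$-th roots formally, I would introduce the new variable $X$ with $X^c = z$, so that the equation becomes
\begin{equation*}
u = X\, Q(u)^{1/c},
\end{equation*}
where $Q(u)^{1/c}$ is the branch of the $c$-th root with $Q(0)^{1/c}\ne 0$ (a well-defined formal power series in $u$, since $Q(0)\neq0$).

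The key step is then to invoke Lagrange--B\"urmann inversion (or simply uniqueness of formal solutions): the equation $u = X\,\phi(u)$ with $\phi(u):=Q(u)^{1/c}$ and $\phi(0)\neq0$ has a \emph{unique} formal power series solution $u=u(X)=\sum_{n\ge1}a_n X^n$, and these coefficients $a_n$ are the ``universal'' coefficients in the statement. This solution depends only on $\phi$, hence only on $P$ and on the chosen branch of the $c$-th root — it does not know which of the $c$ values of $X$ we plugged in. The $c$ distinct small roots $u_i(z)$ are recovered precisely by substituting the $c$ choices $X = \omega^{\,i-1} z^{1/c}$, where $\omega=e^{2\pi i/c}$ cycles through the $c$-th roots of unity. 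Substituting $X=\omega^{\,i-1}z^{1/c}$ into $u(X)$ gives
\begin{equation*}
u_i(z) = \sum_{n\ge1} a_n\,\bigl(\omega^{\,i-1}z^{1/c}\bigr)^n = \sum_{n\ge1} \omega^{\,n(i-1)}\, a_n\, z^{n/c},
\end{equation*}
which is exactly the claimed formula.

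I would also need to justify that these $c$ substitutions really do produce the $c$ \emph{distinct small branches} and exhaust them. This follows because each such $u_i(z)$ has valuation $\ge 1/c$ in $z^{1/c}$, hence tends to $0$ as $z\to0$ (so they are small roots), and the leading term $a_1\omega^{\,i-1}z^{1/c}$ distinguishes them for distinct $i$ (provided $a_1\neq0$, which holds since $a_1 = Q(0)^{1/c}\neq0$ from Lagrange inversion). By Newton polygon considerations, as already used in the proof of Theorem~\ref{theo:luka}, there are exactly $c$ small roots, so the $c$ conjugates account for all of them. The main obstacle, which is really a matter of care rather than difficulty, is the branch-of-the-root bookkeeping: one must fix a single branch of $Q(u)^{1/c}$ to define the universal $a_n$ unambiguously, and then verify that rotating $X$ by $\omega^{\,i-1}$ (rather than changing the branch of the root) is what correctly indexes the distinct small roots. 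This is the exact analogue of the sign choice $X=\pm z^{1/2}$ in the $c=2$ case.
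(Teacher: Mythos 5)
Your proposal is correct and follows essentially the same route as the paper's proof: rewrite the kernel equation as $u = X\,(u^cP(u))^{1/c}$ with $X^c=z$, invoke uniqueness of the formal power series solution $u(X)$, and substitute $X=\omega^{i-1}z^{1/c}$ to obtain the $c$ conjugate small roots. The extra care you take with the branch of the $c$-th root and with checking that the $c$ conjugates are distinct and exhaust the small roots is a welcome elaboration of details the paper leaves implicit, but it is not a different argument.
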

\begin{proof}
The kernel equation yields
$$
u=X \left( p_{-c} + p_{-c+1}u + p_{-c+2} u^2 + \cdots + p_{d-1}u^{c+d-1} + p_{d}u^{c+d} \right)^{1/c},
$$
with $X=\omega^{j} {z}^{1/c}$ for $j=0,1,\ldots,c-1$.  
Since the above equation possesses a unique formal power series solution $u(X)$, the claim follows.
\end{proof}

Next, we apply Lagrange--B\"urmann inversion
to the small roots given by the kernel method, and combine it with the conjugation principle.

\begin{proposition}[\sc Explicit expansion of the roots $u_i$]
\label{prop:coeffuigeneral}
For lattice walks with step polynomial given by 
$P(u)=u^{-h}+u^{-h+1}+\dots+u^{h-1} +u^h$,
let $U(z)$ be the root of\/ $1-z^h P(U) = 0$ whose Taylor
expansion at~$0$ starts $U(z)=z+\cdots$. The series
$U(z)$ is a power series, not a genuine Puiseux series.
Then all small and large roots can be expressed in terms of $U(z)$,
namely we have
\begin{equation}
	u_i(z)=U(\omega^{i-1}z^{1/h}) 
\text{\quad and \quad}	v_i(z)=1/U(\omega^{i-1}z^{1/h}),
\quad i=1,2,\dots,h,
\end{equation}
where $\omega=e^{2\pi i/h}$ is a primitive $h$-th root of unity.
The expansion of a power of the series $U(z)$ is explicitly given by
	\begin{equation}
		U^m(z)=\sum_{n=m}^\infty \frac{m}{n}\binom{n/h}{n-m}_{2h+1}z^n.
	\end{equation}
\end{proposition}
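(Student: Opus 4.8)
The plan is to derive a single Lagrange-type functional equation for $U$, from which both assertions follow. Using the factorization $P(u)=u^{-h}(1+u+\cdots+u^{2h})$, I would rewrite the defining equation $z^hP(U)=1$ as $(U/z)^h=1+U+\cdots+U^{2h}$, and hence as
\[
U=z\,\phi(U),\qquad \phi(U)=\left(1+U+\cdots+U^{2h}\right)^{1/h},
\]
where we take the branch of the $h$-th root with $\phi(0)=1$. Since $\phi(0)=1\ne0$, this exhibits $U$ as a genuine power series with $U(z)=z+\cdots$, as claimed, and it is precisely the form needed for Lagrange--B\"urmann inversion.

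For the root formulas, I would set $u_i(z):=U(\omega^{i-1}z^{1/h})$ and substitute into the kernel. Writing $w=\omega^{i-1}z^{1/h}$, so that $w^h=z$ because $\omega^h=1$, the kernel $1-zP(u_i)$ becomes $1-w^hP(U(w))=0$, which holds by the defining equation for $U$. Thus each $u_i$ is a root of $1-zP(u)=0$; all of them vanish as $z\to0$, and their leading terms $\omega^{i-1}z^{1/h}$ are pairwise distinct, so these are exactly the $h$ small roots of the cleared kernel equation $u^h=z(1+u+\cdots+u^{2h})$ (degree $2h$, hence $h$ small roots), matching the conjugation principle (Proposition~\ref{prop:conjugationgeneral}). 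The symmetry $P(u)=P(1/u)$ means $u\mapsto1/u$ permutes the roots and interchanges small with large ones, giving $v_i(z)=1/u_i(z)=1/U(\omega^{i-1}z^{1/h})$.

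For the coefficient expansion, I would apply Lagrange--B\"urmann inversion (Theorem~\ref{thm:LagrangeBurmann}) to $U=z\,\phi(U)$ with $H(z)=z^m$:
\[
[z^n]U^m(z)=\frac1n[z^{n-1}]\,m\,z^{m-1}\phi^n(z)=\frac mn[z^{n-m}]\left(1+z+\cdots+z^{2h}\right)^{n/h}.
\]
Reading off the last coefficient against the definition $\binom Nk_M=[u^k](1+\cdots+u^{M-1})^N$ with $M=2h+1$ identifies it as $\binom{n/h}{n-m}_{2h+1}$, and the summation begins at $n=m$ because $U$ has valuation~$1$. This yields the stated formula.

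The main obstacle is not the computation but a definitional point: the exponent $n/h$ makes $\phi^n=(1+\cdots+z^{2h})^{n/h}$ a fractional power, so ``$\binom{n/h}{n-m}_{2h+1}$'' is a $(2h+1)$-nomial coefficient with a non-integer upper index. I would resolve this by reading the symbol through its defining coefficient extraction, $\binom{n/h}{n-m}_{2h+1}:=[z^{n-m}](1+z+\cdots+z^{2h})^{n/h}$; the fractional power is a well-defined formal power series because its base has constant term~$1$, and the closed form~\eqref{mnomial} continues to hold verbatim via the generalized binomial theorem. I would also note that Theorem~\ref{thm:LagrangeBurmann} applies unchanged, since $\phi$ remains a bona fide power series with $\phi(0)\ne0$ despite the fractional exponent.
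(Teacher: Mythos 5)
Your proof is correct and follows essentially the same route as the paper's: recasting the kernel equation in the Lagrangean form $U=z(1+U+\cdots+U^{2h})^{1/h}$, identifying the $h$ small roots as $U(\omega^{i-1}z^{1/h})$ via uniqueness of the power-series solution, using $P(u)=P(1/u)$ for the large roots, and applying Lagrange--B\"urmann inversion with $H(z)=z^m$. Your closing remark making explicit that $\binom{n/h}{n-m}_{2h+1}$ must be read through the coefficient extraction of a fractional power is a welcome clarification that the paper leaves implicit.
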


\begin{proof}
We want to solve $1-zP(u)=0$ for $u$. We may rewrite this equation as
$$
z=\frac {u^h} {1+u+\cdots+u^{2h}}.
$$
Taking the $h$-th root, we get
$$
\omega^{i-1}z^{1/h}=\frac {u} {(1+u+\cdots+u^{2h})^{1/h}},
$$
for some $i$ with $1\le i\le h$. 

Since an equation of the form 
$Z=u\phi(u)$, where $\phi(u)$ is a power series in~$u$, 
has a unique power series solution $u(Z)$, the above equation has a unique
solution $u_i(z)$, which turns out to have exactly the form described
in the proposition.
The equation for $v_i$ follows from $u_i=1/v_i$ as we have $P(u)=P(1/u)$.

The equation for $U^m$ comes from Lagrange--B\"urmann inversion:
	\begin{align}
		[z^n]U^m(z)
		&=\frac{1}{n}[z^{-1}](z^m)^\prime P^{n/h}(z)\\
		&=\frac{m}{n}[z^{-m}]\sum_{k}z^k\binom{n/h}{k+n}_{2h+1}\\
		&=\frac{m}{n}\binom{n/h}{n-m}_{2h+1}. \qedhere
	\end{align}
\end{proof}

\begin{theorem}[\sc Closed-form expression for walks with 
$\mathcal{S}=\{0,\pm1,\dots,\pm h\}$]
The numbers of positive walks and meanders from the origin to
altitude~$k$ in $n$~steps from $\mathcal{S}\!=\!\{0,\pm1,\dots,\pm h\}$
admit the closed-form expressions 
	\begin{align*}
		[z^n] G_{0,k}(z) &= 
\sum_{n_1+\dots+n_h=nh}\,\sum_{i_1+\dots+i_h=k}
\frac {i_1} {n_1}{\binom {n_1/h}{n_1-i_1}}_{2h+1}\\
&\kern6cm
\cdots
\frac {i_h} {n_h}{\binom {n_h/h}{n_h-i_h}}_{2h+1}
\omega^{\sum_{j=1}^h(j-1)n_j},
		\\
				[z^n] M_{>0}(z) &= 
\sum_{n_1+\dots+n_h=nh}\,\sum_{i_1,\dots,i_h\ge0}
\frac {i_1} {n_1}{\binom {n_1/h}{n_1-i_1}}_{2h+1}
\cdots
\frac {i_h} {n_h}{\binom {n_h/h}{n_h-i_h}}_{2h+1}
\omega^{\sum_{j=1}^h(j-1)n_j}.
	\end{align*}
		\end{theorem}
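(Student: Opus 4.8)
The plan is to combine the generating-function formulas from Theorem~\ref{prop:G0kgeneral} with the explicit power-series expansions of the small roots from Proposition~\ref{prop:coeffuigeneral}. The starting point is \eqref{eq:G0kgeneral}, which tells us that $G_{0,k}(z) = h_k(u_1(z),\ldots,u_h(z))$, and \eqref{eq:posmeanders}, which gives $M_{>0}(z) = \prod_{i=1}^h \frac{1}{1-u_i(z)}$. Since $h_k$ is the complete homogeneous symmetric polynomial, I would first expand it as
\begin{align*}
	h_k(u_1,\ldots,u_h) = \underset{i_1+\dots+i_h=k}{\sum_{i_1,\dots,i_h\ge0}} u_1^{i_1} u_2^{i_2} \cdots u_h^{i_h},
\end{align*}
and similarly expand each geometric factor $\frac{1}{1-u_i(z)} = \sum_{i_i\ge0} u_i^{i_i}$ for the meander formula, so that $M_{>0}(z)$ becomes the same kind of sum but with the constraint $i_1+\dots+i_h=k$ dropped (i.e.\ the $i_j$ range freely over $\N$).

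The key substitution is $u_i(z) = U(\omega^{i-1} z^{1/h})$ from Proposition~\ref{prop:coeffuigeneral}, where $U$ is the genuine power series root of $1-z^hP(U)=0$. The crucial point is that each power $u_i^{i_i}(z) = U^{i_i}(\omega^{i-1} z^{1/h})$ has a known expansion: from the formula $U^m(z)=\sum_{n} \frac{m}{n}\binom{n/h}{n-m}_{2h+1} z^n$, I would substitute $z \mapsto \omega^{i-1} z^{1/h}$ to obtain
\begin{align*}
	u_i^{i_i}(z) = U^{i_i}(\omega^{i-1}z^{1/h}) = \sum_{n_i} \frac{i_i}{n_i} \binom{n_i/h}{n_i-i_i}_{2h+1} \omega^{(i-1)n_i} z^{n_i/h}.
\end{align*}
Next I would multiply these expansions together over $i=1,\ldots,h$, so that the product $u_1^{i_1}\cdots u_h^{i_h}$ becomes a sum over $(n_1,\ldots,n_h)$ of the product of the coefficients times the combined root-of-unity factor $\omega^{\sum_{j=1}^h (j-1)n_j}$ and the monomial $z^{(n_1+\dots+n_h)/h}$. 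Finally, extracting $[z^n]$ forces $(n_1+\dots+n_h)/h = n$, i.e.\ $n_1+\dots+n_h = nh$, which produces exactly the outer summation constraint in both claimed formulas.

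The main obstacle, or rather the point requiring care, is bookkeeping rather than any deep difficulty: I must verify that the formal substitution $z\mapsto \omega^{i-1}z^{1/h}$ and the interchange of the (infinite) summations are legitimate. The fractional powers $z^{1/h}$ appear in the individual roots, but they must cancel after the coefficient extraction — this is guaranteed because only terms with $n_1+\dots+n_h\equiv 0 \pmod h$ survive, and more precisely only those with total $n_1+\dots+n_h=nh$ contribute to $[z^n]$. One should also confirm that the summation ranges match: for $G_{0,k}$ the inner sum carries the constraint $i_1+\dots+i_h=k$ inherited from $h_k$, whereas for $M_{>0}$ the inner indices $i_1,\ldots,i_h$ run independently over all of $\N$, reflecting the unconstrained product of geometric series. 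Assembling these pieces yields the two stated closed-form expressions directly, with no remaining computation beyond collecting the factors.
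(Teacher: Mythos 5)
Your proposal is correct and follows exactly the paper's route: the paper's proof is the one-line instruction to substitute the expansions of Proposition~\ref{prop:coeffuigeneral} into the generating-function formulas of Theorem~\ref{prop:G0kgeneral}, and your write-up simply carries out that bookkeeping (expanding $h_k$ and the geometric factors, inserting $u_i(z)=U(\omega^{i-1}z^{1/h})$, and extracting $[z^n]$ to force $n_1+\dots+n_h=nh$). No discrepancy to report.
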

	\begin{proof}
We use the expansions from Proposition~\ref{prop:coeffuigeneral} in the generating function formulas from Theorem~\ref{prop:G0kgeneral}.
\end{proof}

%
%

Here are some sequences of numbers of positive walks with steps
$\mathcal{S}=\{0,\pm1,\dots,\pm h\}$, starting at the origin, and
ending at altitude~$1$, for different values of~$h$: 
\begin{align*}
	h &=1 \quad \OEIS{A168049}\footnotemark{}: && 0, 1, 1, 2, 4, 9, 21, 51, 127, 323, 835, \ldots \\	
	h &=2 \quad \OEIS{A104632}: && 0, 1, 2, 6, 20, 73, 281, 1125, 4635, 19525, 83710, \ldots \\	
	h &=3 \quad \OEIS{A276902}: && 0, 1, 3, 12, 56, 284, 1526, 8530, 49106, 289149, 1733347, \ldots \\	
	h &=4 \quad \OEIS{A277920}: && 0, 1, 4, 20, 120, 780, 5382, 38638, 285762, 2162033, 16655167, \ldots 
\end{align*}

\footnotetext{Axxxxxx refers to the corresponding sequence in the
On-Line Encyclopedia of Integer Sequences, available electronically at \url{https://oeis.org}.}
Furthermore,
here are some sequences of numbers of positive walks with steps $\mathcal{S}=\{0,\pm1,\dots,\pm h\}$, starting at the origin, and ending at altitude~$2$, for 
small values of~$h$:
\begin{align*}
	h &=1 \quad \OEIS{A105695}: && 0,0,1,2,5,12,30,76,196,512,1353, \ldots \\	
	h &=2 \quad \OEIS{A276903}: && 0,1,2,7,25,96,382,1567,6575,28096,121847,534953, \ldots \\
	h &=3 \quad \OEIS{A276904}: && 0,1,3,14,68,358,1966,11172,65104,387029,2337919, \ldots \\	
	h &=4 \quad \OEIS{A277921}: && 0,1,4,23,142,950,6662,48420,361378,2753687,21334313,\ldots 
\intertext{\indent Here are the corresponding sequences for positive meanders:}
	h &=1 \quad \OEIS{A005773}: && 1, 1, 2, 5, 13, 35, 96, 267, 750, 2123, 6046, 17303, \ldots \\	  
	h &=2 \quad \OEIS{A278391}: && 1, 2, 7, 29, 126, 565, 2583, 11971, 56038, 264345, \ldots \\
	h &=3 \quad \OEIS{A278392}: && 1,3, 15, 87, 530, 3329, 21316, 138345, 906853,  \ldots \\	
	h &=4 \quad \OEIS{A278393}: && 1,4,26,194,1521,12289,101205,844711,7120398,\ldots 
\intertext{\indent Here are the corresponding sequences for meanders (allowed to touch $0$): }
	h &=1 \quad \OEIS{A005773}: && 1, 2, 5, 13, 35, 96, 267, 750, 2123, 6046, 17303, 49721,  \ldots \\	
	h &=2 \quad \OEIS{A180898}: && 1, 3, 12, 51, 226, 1025, 4724, 22022, 103550, 490191, \ldots \\  
	h &=3 \quad \OEIS{A180899}: && 1, 4, 22, 130, 803, 5085, 32747, 213419, 1403399, \ldots \\	
	h &=4 \quad \OEIS{A180900}: && 1, 5, 35, 265, 2100, 17075, 141246, 1182719, 9994086,\ldots   
\intertext{\indent  Here are the corresponding sequences for excursions:}
	h &=1 \quad \OEIS{A001006}: && 1, 1, 2, 4, 9, 21, 51, 127, 323, 835, 2188, 5798, 15511, 41835, \ldots \\	   
	h &=2 \quad \OEIS{A104184}: && 1, 1, 3, 9, 32, 120, 473, 1925, 8034, 34188, 147787, 647141,    \ldots \\ 
	h &=3 \quad \OEIS{A204208}: && 1, 1, 4, 16, 78, 404, 2208, 12492, 72589, 430569, 2596471,  \ldots \\	
	h &=4 \quad \OEIS{A204209}: && 1, 1, 5, 25, 155, 1025, 7167, 51945, 387000, 2944860,\ldots   
\end{align*}

\begin{remark} Most of the above sequences for $h\geq 3$ were not contained in the On-Line Encyclopedia of Integer Sequences (OEIS) before we added them.
In Section~\ref{Section5}, we discuss the combinatorial structures related to the sequences which were already in the OEIS.
\end{remark}

\subsection{Counting walks with steps in \texorpdfstring{$\mathcal{S}=\{\pm1,\dots,\pm h\}$}{S=\{-h,\dots,-1,1,\dots,h\}}}\label{4.2}

In this Section~\ref{4.2}, we consider the same steps as in the previous one, 
except that we drop the $0$-step.

Certainly, for any type of walks consisting of $k$ steps with $0$-step
included, enumerated by $f_k$ say,
the number of walks of the same type consisting of $n$ steps, all of which
different from the $0$-step, can be obtained by the inclusion-exclusion
principle. The result is $\sum_{k=0}^n(-1)^{n-k}\binom nk f_k$.

Here, our way to derive the corresponding formulas is more ad hoc and relies 
on the shape of the considered steps in~$\S$.
This offers the advantage of leading to positive sum formulas, as opposed to
the alternating sums produced by inclusion-exclusion.
For convenience, we introduce the mock-$m$-nomial coefficients by
	\begin{equation}
 \binom{n}{k}_{2m}^{\hspace{-1mm}*} :=		[u^k] (1+\cdots+u^{m-1}+u^{m+1}+\cdots+u^{2m})^n\,.
	\end{equation}

\begin{proposition}
The mock-$m$-nomial coefficients can be expressed in terms of the
(ordinary) $m$-nomial coefficients in the form\footnote{Here, the
$^{\hspace{0mm}*}$ is a mnemonic to remind us that we do not have the 0 step.}
	\begin{equation}\label{mockmnomial}
	\binom{n}{k}_{2m}^{\hspace{-1mm}*}=\sum_{i=0}^n\binom{n}{i}\binom{n}{k-(m+1)i}_{m}. 
	\end{equation}
\end{proposition}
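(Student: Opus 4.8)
The plan is to recognize that the polynomial whose $n$-th power defines the mock-$m$-nomial coefficients factors neatly, which immediately reduces the statement to the definition of the ordinary $m$-nomial coefficients combined with a single binomial expansion.

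First I would observe the factorization
$$
1+u+\cdots+u^{m-1}+u^{m+1}+\cdots+u^{2m}
=(1+u+\cdots+u^{m-1})(1+u^{m+1}).
$$
This is verified by expanding the right-hand side: the summand $1$ contributes the block $u^0,\dots,u^{m-1}$, while $u^{m+1}$ shifts that same block up to $u^{m+1},\dots,u^{2m}$, so every power from $0$ to $2m$ occurs exactly once except $u^m$, which is precisely the polynomial defining $\binom{n}{k}_{2m}^{\hspace{-1mm}*}$. This factorization is the only genuine idea in the argument; everything afterwards is formal bookkeeping.

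Next I would raise both sides to the $n$-th power and apply the binomial theorem to the second factor, writing
$$
\left(1+u+\cdots+u^{m-1}+u^{m+1}+\cdots+u^{2m}\right)^n
=(1+u+\cdots+u^{m-1})^n\sum_{i=0}^n\binom{n}{i}u^{(m+1)i}.
$$
Then I would extract the coefficient of $u^k$ from both sides. On the left this is $\binom{n}{k}_{2m}^{\hspace{-1mm}*}$ by definition. On the right, linearity of coefficient extraction together with the shift $[u^k]\,u^{(m+1)i}F(u)=[u^{k-(m+1)i}]F(u)$ gives $\sum_{i=0}^n\binom{n}{i}\,[u^{k-(m+1)i}](1+u+\cdots+u^{m-1})^n$, and recognizing the inner coefficient as $\binom{n}{k-(m+1)i}_{m}$ yields exactly Formula~\eqref{mockmnomial}.

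I expect no real obstacle here: once the factorization is spotted, the computation is a two-line manipulation of formal power series, entirely parallel to the proof of the $m$-nomial identity~\eqref{mnomial} given just above. The only point requiring a moment's care is confirming that the grouping genuinely omits the $u^m$ term and no other, which the expansion above makes transparent.
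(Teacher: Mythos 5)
Your proof is correct and follows exactly the paper's argument: both hinge on the factorization $1+u+\cdots+u^{m-1}+u^{m+1}+\cdots+u^{2m}=(1+u+\cdots+u^{m-1})(1+u^{m+1})$, followed by the binomial theorem and extraction of the coefficient of $u^k$ from the product. There is nothing to add.
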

\begin{proof}
	Factoring the expression and extracting coefficients, we obtain
	\begin{align}
	\binom{n}{k}_{2m}^{\hspace{-1mm}*}&=
	[u^k](1+\cdots+u^{m-1}+u^{m+1}+\cdots+u^{2m})^n\\
	&=[u^k](1+u^{m+1})^n(1+u+\cdots+u^{m-1})^n\\
	&=[u^k]\left(\sum_{i\ge0}\binom{n}{i}u^{(m+1)i} \right)\left(\sum_{j\ge0}\binom{n}{j}_{m}u^j\right)\\
	&=\sum_{i=0}^n\binom{n}{i}\binom{n}{k-(m+1)i}_{m}. \qedhere
	\end{align}
\end{proof}

These mock-$m$-nomial coefficients have also a direct combinatorial 
interpretation in terms of lattice walk enumeration.

\begin{theorem}[\sc Unconstrained walk enumeration] 
The mock-$m$-nomial coefficient
	$\binom{n}{k+hn}_{2h}^{\hspace{-1mm}*}$ is the number of unconstrained walks running from~$0$ to $k$ in $n$ steps taken from $\{\pm1,\pm2,\dots,\pm h\}$.
\end{theorem}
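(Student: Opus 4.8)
The plan is to mirror exactly the proof of the preceding theorem (unconstrained walks with steps $\{0,\pm1,\dots,\pm h\}$), which expressed $[u^k]P^n(u)$ as an ordinary $m$-nomial coefficient. The only change here is that the step polynomial has lost its constant term, so $P(u)=u^{-h}+\cdots+u^{-1}+u+\cdots+u^h$. The whole argument rests on \eqref{eq:unconstrainedpaths}, which tells us $W(z,u)=1/(1-zP(u))=\sum_{n\ge0}P^n(u)z^n$, so that the number of unconstrained walks from $0$ to altitude $k$ in $n$ steps is precisely $[u^kz^n]W(z,u)=[u^k]P^n(u)$.

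First I would factor out the lowest power of $u$ to shift from a Laurent polynomial to an honest polynomial: writing $P(u)=u^{-h}(1+u+\cdots+u^{h-1}+u^{h+1}+\cdots+u^{2h})$, the bracket is exactly the polynomial $1+\cdots+u^{h-1}+u^{h+1}+\cdots+u^{2h}$ whose $n$-th power defines the mock-$m$-nomial coefficient $\binom{n}{\,\cdot\,}_{2h}^{\hspace{-1mm}*}$ (with $m=h$). Therefore
\begin{align*}
[u^k]P^n(u)
&=[u^k]u^{-hn}\big(1+\cdots+u^{h-1}+u^{h+1}+\cdots+u^{2h}\big)^n\\
&=[u^{k+hn}]\big(1+\cdots+u^{h-1}+u^{h+1}+\cdots+u^{2h}\big)^n
=\binom{n}{k+hn}_{2h}^{\hspace{-1mm}*},
\end{align*}
which is the claimed count.

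There is essentially no obstacle here; the result is the $0$-step-free analogue of the previous theorem and the entire proof is the single factorization above, completely parallel to \eqref{eq:steppoly}. The one point worth a sentence of care is the index shift in the coefficient extraction: multiplying by $u^{-hn}$ turns $[u^k]$ into $[u^{k+hn}]$, and one should note that $k+hn$ automatically lies in the admissible range $0\le k+hn\le 2hn$ for the mock-$m$-nomial coefficient precisely because the reachable altitudes after $n$ steps of size at most $h$ in absolute value satisfy $-hn\le k\le hn$; outside this range both the walk count and the coefficient vanish, consistent with the convention on out-of-range binomials already adopted in the paper.
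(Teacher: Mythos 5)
Your proof is correct and follows essentially the same route as the paper: both invoke $W(z,u)=1/(1-zP(u))$ to identify the walk count with $[u^k]P^n(u)$, then factor $P(u)=u^{-h}(1+\cdots+u^{h-1}+u^{h+1}+\cdots+u^{2h})$ and shift the coefficient index to $k+hn$, matching the definition of the mock-$m$-nomial coefficient. Your extra remark about the admissible range of $k+hn$ is a harmless addition not present in the paper's one-line computation.
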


\begin{proof}
We have
	\begin{align}
	[u^k]P^n(u)
	&=[u^k]\left(\sum_{i=-h}^{-1}u^i+\sum_{i=1}^{h}u^i\right)^n \\
	&=[u^k]u^{-hn}\left(\sum_{i=0}^{h-1}u^i+\sum_{i=h+1}^{2h}u^i\right)^n
	=\binom{n}{k+hn}_{2h}^{\hspace{-1mm}*}. \qedhere
	\end{align}
\end{proof}

%


\begin{proposition}[\sc Explicit expansion of the roots $u_i$]	
\label{prop:smallbranches} 
	For lattice walks with step polynomial 
given by $P(u)=u^{-h}+\cdots+u^{-1}+u^1+\cdots+u^{h}$,
	let $U(z)$ be the root of\/ $1-z^h P(U) = 0$ whose Taylor
expansion at~$0$ starts $U(z)=z+\cdots$.
Again, $U(z)$ is a power series, not a genuine Puiseux series.
	Then $U(z)$ satisfies
	\begin{equation}\label{smallroot2}
	U^m(z)=\sum_{n=1}^\infty \frac{m}{n}\binom{n/h}{n-m}^{\hspace{-1mm}*}_{2h}z^n,
	\end{equation}
	and all small and large roots are expressed in terms of $U(z)$ as
	\begin{equation}
	u_i(z)=U(\omega^{i-1}z^{1/h}) 
	\text{\quad and \quad}	v_i(z)=1/U(\omega^{i-1}z^{1/h}),
\quad  \text{for $i=1,2,\dots,h$},
	\end{equation}
	where $\omega=e^{2\pi i/h}$ is a primitive $h$-th root of unity.
\end{proposition}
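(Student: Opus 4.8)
The plan is to prove Proposition~\ref{prop:smallbranches} by mirroring the structure of the proof of Proposition~\ref{prop:coeffuigeneral}, since the only change is that the step polynomial now omits the $0$-step. First I would establish the expressions for the roots. Starting from the kernel equation $1-zP(u)=0$ with $P(u)=u^{-h}+\cdots+u^{-1}+u^1+\cdots+u^h$, I would clear denominators to rewrite it as
\begin{equation*}
z=\frac{u^h}{1+u+\cdots+u^{h-1}+u^{h+1}+\cdots+u^{2h}},
\end{equation*}
where the numerator on the right comes from multiplying through by $u^h$ and the denominator is $u^h P(u)$, which is exactly the mock-$m$-nomial generating polynomial (note the missing $u^h$ term in the denominator, reflecting the absent $0$-step). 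Taking $h$-th roots gives $\omega^{i-1}z^{1/h}=u/(1+u+\cdots+u^{h-1}+u^{h+1}+\cdots+u^{2h})^{1/h}$ for the appropriate choice of root of unity. Since the right-hand side has the form $u\,\phi(u)$ with $\phi$ a power series in $u$ satisfying $\phi(0)\ne0$, this equation has a unique formal power series solution in the variable $Z=\omega^{i-1}z^{1/h}$; calling it $U$, I obtain $u_i(z)=U(\omega^{i-1}z^{1/h})$. The formula $v_i(z)=1/U(\omega^{i-1}z^{1/h})$ then follows immediately from the symmetry $P(u)=P(1/u)$, which forces the large roots to be reciprocals of the small roots.

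Next I would derive the coefficient formula~\eqref{smallroot2}. The series $U(z)$ satisfies $1-z^hP(U)=0$, equivalently $U^h=z^h\cdot U^hP(U)$, so $U$ is the unique power-series solution of a Lagrange-type equation. Concretely, writing $\Phi(U):=U^h P(U)=1+U+\cdots+U^{h-1}+U^{h+1}+\cdots+U^{2h}$, the relation $z^h=U^h/\Phi(U)$ exhibits $U$ as satisfying $U=z\,\Phi(U)^{1/h}$ after taking $h$-th roots, which is precisely the shape $U=z\psi(U)$ with $\psi(U)=\Phi(U)^{1/h}$ required by Lagrange--B\"urmann inversion (Theorem~\ref{thm:LagrangeBurmann}). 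I would then apply the inversion formula with $H(z)=z^m$ to extract
\begin{equation*}
[z^n]U^m(z)=\frac{1}{n}[z^{-1}](z^m)'\,\Phi(z)^{n/h}=\frac{m}{n}[z^{n-m}]\Phi(z)^{n/h}.
\end{equation*}
Recognizing $\Phi(z)^{n/h}=(1+z+\cdots+z^{h-1}+z^{h+1}+\cdots+z^{2h})^{n/h}$ as the generating function whose coefficients are the mock-$m$-nomial coefficients $\binom{n/h}{\cdot}^{*}_{2h}$ then yields $[z^n]U^m(z)=\frac{m}{n}\binom{n/h}{n-m}^{*}_{2h}$, which is exactly~\eqref{smallroot2}.

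The main conceptual subtlety—more than an outright obstacle—is the legitimacy of manipulating the fractional power $\Phi(U)^{1/h}$ and the fractional exponent $n/h$ in the $m$-nomial coefficient, exactly as in the companion Proposition~\ref{prop:coeffuigeneral}. I would address this by working in the ring of Puiseux series: $U(z)$ itself is a genuine power series (as the proposition asserts), so the substitutions $z\mapsto\omega^{i-1}z^{1/h}$ produce the Puiseux-series small roots $u_i$, and the Lagrange--B\"urmann extraction is carried out for the power series $U$ in the formal variable before any fractional substitution, which keeps every step well-defined. The extended binomial coefficient $\binom{n/h}{n-m}^{*}_{2h}$ is simply the coefficient $[z^{n-m}]\Phi(z)^{n/h}$ interpreted via the generalized binomial theorem, consistent with the definition already used for the ordinary case. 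Everything else is routine bookkeeping: verifying $\Phi(0)=1\ne0$ so that uniqueness of the power-series solution applies, and checking that the initial term $U(z)=z+\cdots$ is correctly normalized, which follows since $\Phi(U)^{1/h}=1+O(U)$ forces $U=z+O(z^2)$.
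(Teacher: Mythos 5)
Your proposal is correct and follows essentially the same route as the paper: the paper's own proof of this proposition is just the Lagrange--B\"urmann computation $[z^n]U^m(z)=\frac{1}{n}[z^{-1}](z^m)'P^{n/h}(z)=\frac{m}{n}\binom{n/h}{n-m}^{*}_{2h}$, with the root-structure claims handled exactly as in the companion Proposition~\ref{prop:coeffuigeneral} (rewrite the kernel as $z=u^h/\Phi(u)$, take $h$-th roots, invoke uniqueness of the power-series solution of $Z=u\phi(u)$, and use $P(u)=P(1/u)$ for the large roots), which is precisely what you spell out. The only nitpick is notational: in your intermediate display the residue extraction $[z^{-1}]$ should be applied to $(z^m)'P^{n/h}(z)=(z^m)'z^{-n}\Phi(z)^{n/h}$ rather than to $(z^m)'\Phi(z)^{n/h}$, but your final expression $\frac{m}{n}[z^{n-m}]\Phi(z)^{n/h}$ and its identification with the mock-$m$-nomial coefficient are correct.
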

\begin{proof}
	We apply Lagrange--B\"urmann inversion to get
	\begin{align*}
	[z^n]U^m(z)
	=\frac{1}{n}[z^{-1}](z^m)^\prime P^{n/h}(z)
	=\frac{m}{n}[z^{-m}]\sum_{k}u^k\binom{n/h}{k+n}^{\hspace{-1mm}*}_{2h}
	=\frac{m}{n}\binom{n/h}{n-m}^{\hspace{-1mm}*}_{2h}. \qquad \qedhere
	\end{align*}
\end{proof}

\begin{theorem}[\sc Closed-form expression for walks with 
$\mathcal{S}\!=\!\{\pm1,\dots,\pm h\}$]
The numbers of positive walks and meanders from the origin to
altitude~$k$ in $n$~steps from $\mathcal{S}=\{\pm1,\dots,\pm h\}$
admit the closed-form expressions 
	\begin{align*}
		[z^n] G_{0,k}(z) &= 
\sum_{n_1+\dots+n_h=nh}\,\sum_{i_1+\dots+i_h=k}
\frac {i_1} {n_1}{\binom {n_1/h}{n_1-i_1}}_{2h}^{\hspace{-1mm}*}
\cdots
\frac {i_h} {n_h}{\binom {n_h/h}{n_h-i_h}}_{2h}^{\hspace{-1mm}*}
\omega^{\sum_{j=1}^h(j-1)n_j},\\
				[z^n] M_{>0}(z) &= 
\sum_{n_1+\dots+n_h=nh}\,\sum_{i_1,\dots,i_h\ge0}
\frac {i_1} {n_1}{\binom {n_1/h}{n_1-i_1}}_{2h}^{\hspace{-1mm}*}
\cdots
\frac {i_h} {n_h}{\binom {n_h/h}{n_h-i_h}}_{2h}^{\hspace{-1mm}*}
\omega^{\sum_{j=1}^h(j-1)n_j}.
	\end{align*}
		\end{theorem}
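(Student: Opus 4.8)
The plan is to prove both identities by a single mechanism: substitute the explicit Puiseux expansions of the small roots into the symmetric-function formulas for $G_{0,k}(z)$ and $M_{>0}(z)$, and then read off the coefficient of $z^n$. Everything needed is already in place. Theorem~\ref{prop:G0kgeneral} applies because $P(u)=P(1/u)$, and it gives
\[
G_{0,k}(z)=h_k\bigl(u_1(z),\dots,u_h(z)\bigr)\quad\text{and}\quad M_{>0}(z)=\prod_{i=1}^h\frac1{1-u_i(z)},
\]
while Proposition~\ref{prop:smallbranches} supplies $u_j(z)=U(\omega^{j-1}z^{1/h})$ together with the power expansion $U^m(w)=\sum_{n}\frac mn\binom{n/h}{n-m}_{2h}^{\hspace{-1mm}*}w^n$.

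First I would expand the symmetric functions into monomials. For the positive walks, $h_k(u_1,\dots,u_h)=\sum_{i_1+\dots+i_h=k}u_1^{i_1}\cdots u_h^{i_h}$; for the positive meanders, expanding each geometric factor gives $\prod_j(1-u_j)^{-1}=\sum_{i_1,\dots,i_h\ge0}u_1^{i_1}\cdots u_h^{i_h}$. The two right-hand sides differ only in the index set for the exponents $i_j$, which already explains why the two target formulas are identical apart from the condition $i_1+\dots+i_h=k$ versus $i_1,\dots,i_h\ge0$.

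Next I would substitute $u_j=U(\omega^{j-1}z^{1/h})$ into each monomial. Setting $w=\omega^{j-1}z^{1/h}$ in the expansion of $U^{i_j}$ yields
\[
u_j^{i_j}=\sum_{n_j}\frac{i_j}{n_j}\binom{n_j/h}{n_j-i_j}_{2h}^{\hspace{-1mm}*}\,\omega^{(j-1)n_j}\,z^{n_j/h}.
\]
Multiplying these $h$ series together and collecting the power of $z$, the total exponent is $(n_1+\dots+n_h)/h$; hence extracting $[z^n]$ forces the single linear constraint $n_1+\dots+n_h=nh$, and the $\omega$-factors combine into $\omega^{\sum_{j}(j-1)n_j}$. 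This produces exactly the two claimed double sums.

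The main obstacle I anticipate is bookkeeping rather than depth: the individual $u_j$ are genuine Puiseux series in $z^{1/h}$, so I must justify that manipulating them as formal series in $z^{1/h}$ is legitimate and that the fractional-power contributions cancel, leaving a true power series in $z$. This is guaranteed a priori, since $G_{0,k}(z)$ and $M_{>0}(z)$ are power series, so only the terms with $h\mid(n_1+\dots+n_h)$ can survive. A second, purely cosmetic point concerns the meander sum's boundary terms: when some $i_j=0$, the factor $\frac{i_j}{n_j}\binom{n_j/h}{n_j-i_j}_{2h}^{\hspace{-1mm}*}$ must be read with the convention that it equals the coefficient of $z^0$ in $U^0=1$, namely $1$ for $n_j=0$ and $0$ otherwise; recording this convention explicitly is all that is required to make the stated formula correct.
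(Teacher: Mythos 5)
Your proposal is correct and follows exactly the paper's own (much terser) proof: substitute the root expansions of Proposition~\ref{prop:smallbranches} into the generating-function formulas of Theorem~\ref{prop:G0kgeneral}, expand the symmetric functions into monomials, and extract $[z^n]$ to obtain the constraint $n_1+\dots+n_h=nh$ and the factor $\omega^{\sum_j(j-1)n_j}$. Your added remarks on the legitimacy of the Puiseux-series manipulations and the $i_j=0$ convention are useful clarifications of details the paper leaves implicit.
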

	\begin{proof}
We use the expansions from Proposition~\ref{prop:smallbranches} in the generating function formulas from Theorem~\ref{prop:G0kgeneral}.
\end{proof}

Here are
some sequences of numbers of walks with steps in $\mathcal{S}=\{\pm1,\pm2,\dots,\pm h\}$, starting at the origin, and ending at altitude~$1$, 
for different values of~$h$:
\begin{align*}
	h &=1 \quad \OEIS{A000108}: && 0, 1, 0, 1, 0, 2, 0, 5, 0, 14, 0, \ldots \\	
	h &=2 \quad \OEIS{A166135}: && 0, 1, 1, 3, 7, 22, 65, 213, 693, 2352, 8034, \ldots \\	
	h &=3 \quad \OEIS{A276852}: && 0, 1, 2, 7, 28, 121, 560, 2677, 13230, 66742, 343092, \ldots \\	
	h &=4 \quad \OEIS{A277922}: && 0, 1, 3, 13, 71, 405, 2501, 15923, 104825, 704818, 4827957, \ldots 
\end{align*}

Furthermore, here are 
some sequences of numbers of walks with steps in $\mathcal{S}=\{\pm1,\pm2,\break\dots,\pm h\}$, starting at the origin, and ending at altitude~$2$, 
for different values of~$h$:
\begin{align*}
	h &=1 \quad \OEIS{A000108}: && 0, 0, 1, 0, 2, 0, 5, 0, 14, 0, 42, \ldots \\	
	h &=2 \quad \OEIS{A111160}: && 0,1,1,4,9,31,91,309,1009,3481,11956, \ldots \\	
	h &=3 \quad \OEIS{A276901}: && 0,1,2,9,34,159,730,3579,17762,90538,467796, \ldots \\
	h &=4 \quad \OEIS{A277923}: && 0, 1, 3, 16, 84, 505, 3121, 20180, 133604, 904512, 6224305,\ldots
\end{align*}

Here are the corresponding sequences for positive meanders:
\begin{align*}
	h &=1 \quad \OEIS{A001405}: && 1, 1, 1, 2, 3, 6, 10, 20, 35, 70, 126, 252, 462, 924,  \ldots \\	   
	h &=2 \quad \OEIS{A278394}: && 1, 2, 5, 17, 58, 209, 761, 2823, 10557, 39833, 151147 \ldots \\
	h &=3 \quad \OEIS{A278395}: && 1, 3, 12, 60, 311, 1674, 9173, 51002, 286384, 1620776, \ldots \\	
	h &=4 \quad \OEIS{A278396}: && 1, 4, 22, 146, 1013, 7269, 53156, 394154, 2951950,\ldots 
\end{align*}

Here are the corresponding sequences for meanders (allowed to touch~$0$):
\begin{align*}
	h &=1 \quad \OEIS{A001405}: && 1, 1, 2, 3, 6, 10, 20, 35, 70, 126, 252, 462, 924, 1716, 3432, \ldots \\	   
	h &=2 \quad \OEIS{A047002}: && 1,2,7,23,83,299,1107,4122,15523,58769,223848, \ldots \\
	h &=3 \quad \OEIS{A278398}: && 1,3,15,75,400,2169,11989,66985,377718,2144290, \ldots \\	
	h &=4 \quad \OEIS{A278416}: && 1,4,26,174,1231,8899,65492,487646,3664123,\ldots   
\end{align*}

Here are the corresponding sequences for excursions:
\begin{align*}
	h &=1 \quad \OEIS{A126120}: && 1, 0, 1, 0, 2, 0, 5, 0, 14, 0, 42, 0, 132, 0, 429, 0  \ldots \\	   
	h &=2 \quad \OEIS{A187430}: && 1, 0, 2, 2, 11, 24, 93, 272, 971, 3194, 11293, 39148, 139687 \ldots \\
	h &=3 \quad \OEIS{A205336}: && 1, 0, 3, 6, 35, 138, 689, 3272, 16522, 83792, 434749, \ldots \\	
	h &=4 \quad \OEIS{A205337}: && 1, 0, 4, 12, 82, 454, 2912, 18652, 124299, 841400,\ldots 
\end{align*}

\begin{remark}
The cases with $h=1$ lead to famous sequences, having many links with the
combinatorics of trees, via the {\L}ukasiewicz correspondence (see
Section~\ref{Section2}). 
It is surprising that the cases with $h=2$ also offer many
links with trees, as we show in the next section.  
\end{remark} 

\pagebreak

\section{Some links with other combinatorial problems}\label{Section5}

In this section, we establish some links between our lattice walks and other combinatorial problems.
Thereby we prove several conjectures issued in the On-Line Encyclopedia of Integer Sequences.

\subsection{Trees and basketball walks from 0 to 1}

First, we prove that the sequence \oeis{A166135} from the On-Line Encyclopedia of Integer Sequences,
coming from the enumeration of certain tree structures 
used in financial mathematics,
is in fact related to basketball walks, and corresponds more precisely to the coefficients of $G_{0,1}(z)$.

The $m$-nomial tree is a lattice-based computational model used in financial mathematics to price options.
It was developed by Phelim Boyle~\cite{Boyle86} in 1986.  
For example, for $m=3$, the underlying stock price is modelled as a recombining tree, where, at each node, the price has three possible paths: an up, down, or stable path.
The case $m=2$ has a long history going back to one of the founding problems of financial mathematics and probability theory, the ``ruin problem'', 
analysed in the XVIIIth and XIXth century by de Moivre, Laplace, Huygens, Amp\`ere, Rouch\'e, 
before to be revisited by combinatorialists like Catalan, Whitworth,
Bertrand, Andr\'e, Delannoy (see~\cite{BaSc05} for more on these aspects).
Figure~\ref{4nomialTree} illustrates a 4-nomial tree.

\begin{figure}[!hb]
\includegraphics[width=.6\textwidth]{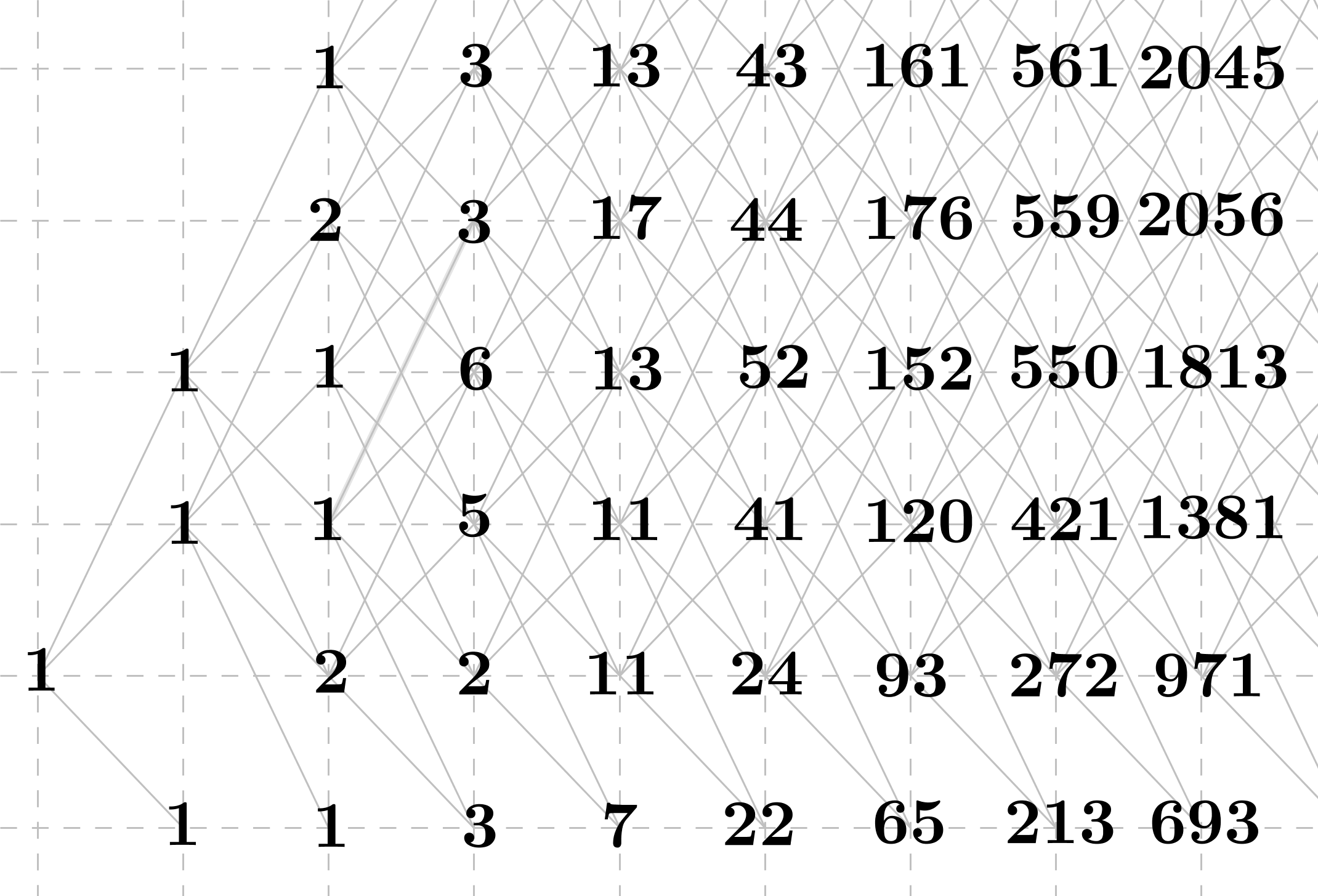}
\caption{Cutting a 4-nomial tree at one unit from its root gives the above picture, which 
thus naturally corresponds to the lattice supporting our lattice
basketball walks. The numbers near each node indicate the number of
walks from the root to this node.\label{4nomialTree}}
\end{figure}

The following proposition gives the exact link between these trees and a generalization of basketball walks.

\begin{proposition}[\sc Link between lattice walks and $m$-nomial trees]
Consider the step sets 
$$	\S_{2n} =\{-n,\dots,-1,1,\dots,n\} \text{ and } 	\S_{2n+1}=\S_{2n}\cup\{0\}.$$
For each step set $S_{m}$, define $T_m(z)$ to be the generating function for walks using steps from $\S_{m}$, starting at the origin and getting absorbed at $-1$.
(By this, we mean that the walks may never touch $y=-1$ except, possibly, at the very last step.)
Then the coefficients of $T_2(z)$ are the Catalan numbers,
the coefficients of $T_3(z)$ are the Motzkin numbers, 
while the coefficients of $T_4(z)$ count our basketball walks from 0 to 1
(walks with steps $\pm 2, \pm 1$, starting at the origin and ending at altitude~1, and never touching 0 in-between). 
\end{proposition}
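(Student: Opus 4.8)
The plan is to establish each of the three claims by identifying the walks counted by $T_m(z)$ with a known combinatorial family through a shift-and-reflect argument, thereby reducing everything to objects already enumerated in the paper. The key observation is that ``getting absorbed at $-1$'' is a first-passage condition: a walk counted by $T_m(z)$ stays strictly above $-1$ (i.e.\ at altitude $\geq 0$) until possibly the very last step, at which point it may drop to $-1$. First I would split such a walk into its (possibly empty) prefix that remains at altitude $\geq 0$, followed by an optional terminal step landing at $-1$. For the cases $m=2,3$ the step set is \L ukasiewicz (the most negative step is $-1$), so the prefix is a meander on $\N$ and the terminal step is the unique $-1$ step from altitude $0$; this is exactly the tree/\L ukasiewicz correspondence already recorded in Section~\ref{Section2}, giving Catalan numbers for $\S_2=\{-1,+1\}$ (Dyck excursions) and Motzkin numbers for $\S_3=\{-1,0,+1\}$.

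For the genuinely new case $m=4$, with steps $\pm1,\pm2$, the absorption barrier at $-1$ is no longer reached by a single canonical step, because the walk can cross from altitude $0$ or $+1$ down to $-1$ using a $-2$ step. Here I would shift the whole picture up by one unit: a walk from $0$ absorbed at $-1$ becomes a walk from altitude $1$ that stays $\geq 1$ (strictly positive) until a final step brings it to altitude $0$. By the time-reversal symmetry $P(u)=P(u^{-1})$ of the basketball step set (exploited throughout Section~\ref{Section3}), reversing such a walk yields a walk that starts at $0$, immediately rises, stays strictly above the $x$-axis, and ends at altitude $1$ --- precisely the positive basketball walks from $0$ to $1$ counted by $G_{0,1}(z)$ in Proposition~\ref{prop:G011}. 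Thus the coefficients of $T_4(z)$ coincide with $[z^n]G_{0,1}(z)$, which are the basketball walks from $0$ to $1$ as claimed, and match sequence \oeis{A166135}.

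The main obstacle I anticipate is pinning down the boundary conventions in the $m=4$ absorption model so that the shift-and-reverse bijection is exact: one must be careful that ``may touch $-1$ only at the last step'' translates correctly under the $+1$ shift into ``strictly positive except possibly at the endpoints,'' and that the terminal absorbing step corresponds bijectively to the final descent to altitude $1$ after reversal. In particular I would verify that no walk is double-counted or dropped at the empty/length-one cases, and that the reversal genuinely lands in the class enumerated by $G_{0,1}$ rather than a closely related class (e.g.\ one ending at $0$ or touching $0$ in the interior). Once the dictionary between absorption at $-1$ and strict positivity is fixed, the identifications with Catalan, Motzkin, and $G_{0,1}$ are immediate consequences of results already proved in the excerpt, so the combinatorial bookkeeping at the boundary is the only delicate point.
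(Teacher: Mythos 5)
Your proposal is correct and follows essentially the same route as the paper: a direct identification with Dyck/Motzkin excursions (plus the forced final $-1$ step) for $m\le 3$, and a shift-by-one combined with time reversal (using $P(u)=P(u^{-1})$) identifying $T_4$ with $G_{0,1}$ for $m=4$. The boundary bookkeeping you flag is exactly what the paper's table of first/last steps is meant to settle, so no new idea is needed.
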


\begin{proof}While the correspondence is direct for $m\leq 3$, it follows for $m=4$ from a time reversion, 
as each walk from $T_4$ can then be obtained from $G_{0,1}$ and vice versa (see Table~\ref{fig:4types2}). Thus, $T_4(z)=G_{0,1}(z)$.
\end{proof}

\medskip
\begin{table*}[!hb]
		\small
		\begin{center}\renewcommand{\tabcolsep}{3pt}
			\begin{tabular}{|c|c|}
				\hline
				$T_4$ & $G_{0,1}$\\
				\hline
				\begin{tabular}{c} 				
					{\includegraphics[width=71mm]{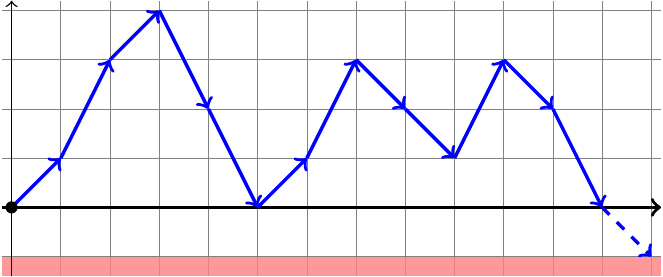}} 
					\\ 
					last step is a 1-step down 
				\end{tabular}
				& \begin{tabular}{c} 				
					{\includegraphics[width=71mm]{G01walk1jump}} 
					\\
					first step is a 1-step up
				\end{tabular} \\
				\hline
				\begin{tabular}{c} 
					\includegraphics[width=71mm]{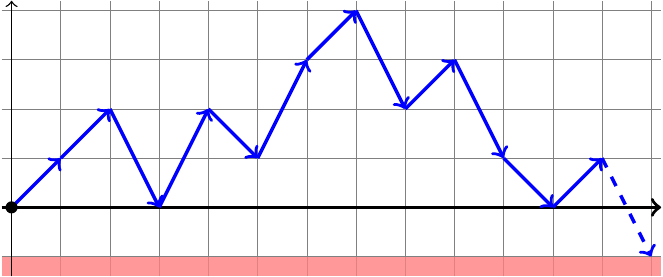} 
					\\ 
					last step is a 2-step down\\ 
				\end{tabular}
				& \begin{tabular}{c} 
					{\includegraphics[width=71mm]{G01walk2jump}} 
					\\ 
					first step is a 2-step up\\ 
				\end{tabular}\\
				\hline
			\end{tabular}
		\end{center}
		\caption{\label{fig:4types2} 
			By time reversal, $T_4(z)=G_{0,1}(z)$.}
	\end{table*}


\subsection{Increasing trees and basketball walks}

A \textit{unary-binary tree} is an ordered tree such that each node has $0,1$, or~$2$ children. 
An \textit{increasing unary-binary tree on $n$ vertices} is a unary-binary tree with $n$ vertices labelled $1,2,\dots,n$ such that the labels along each walk from the root are increasing (cf.~\cite[p.~51]{Stanley86}).
Given an increasing unary-binary tree~$T$, we associate with $T$ the \textit{permutation} $\sigma_T$ constructed by reading the tree left to right, level by level, starting at the root.
A permutation $\sigma$ is said to \textit{contain the pattern $\pi$}
if there exists a subsequence of $\sigma$ that has the same relative
order as $\pi$. Otherwise, $\sigma$ is said to \textit{avoid the
  pattern $\pi$}. For example, the permutation $\sigma=14235$ contains
the pattern $213$ because $\sigma$ contains the subsequence $425$, in
which the numbers have the same relative order as in $213$, while the permutation $12453$ avoids $213$. 

Manda Riehl initiated studies of increasing trees for which the associated permutation avoids a given pattern (see also~\cite{Riehl16}).
By a computer program, she obtained the first terms of the corresponding sequences for patterns of length~3.
She observed that ``the number of increasing unary-binary trees with associated permutation avoiding 213''
seems to coincide with  sequence \oeis{A166135}, which we proved to
count basketballs walks from altitude~0~to altitude~1.
Figure~\ref{trees} shows a verification of this claim for $n=5$: 
there are $39$ increasing unary-binary trees on $5$ vertices, among them, $22$ correspond to permutations avoiding the pattern $213$.
(The forbidden subsequences are highlighted in red. The trees in black all avoid $213$. The trees are grouped according to their associated permutations. Tree labels are read left to right.)
\begin{figure} 
	\vspace{.1cm}
	\begin{center}
		\includegraphics[scale=0.66]{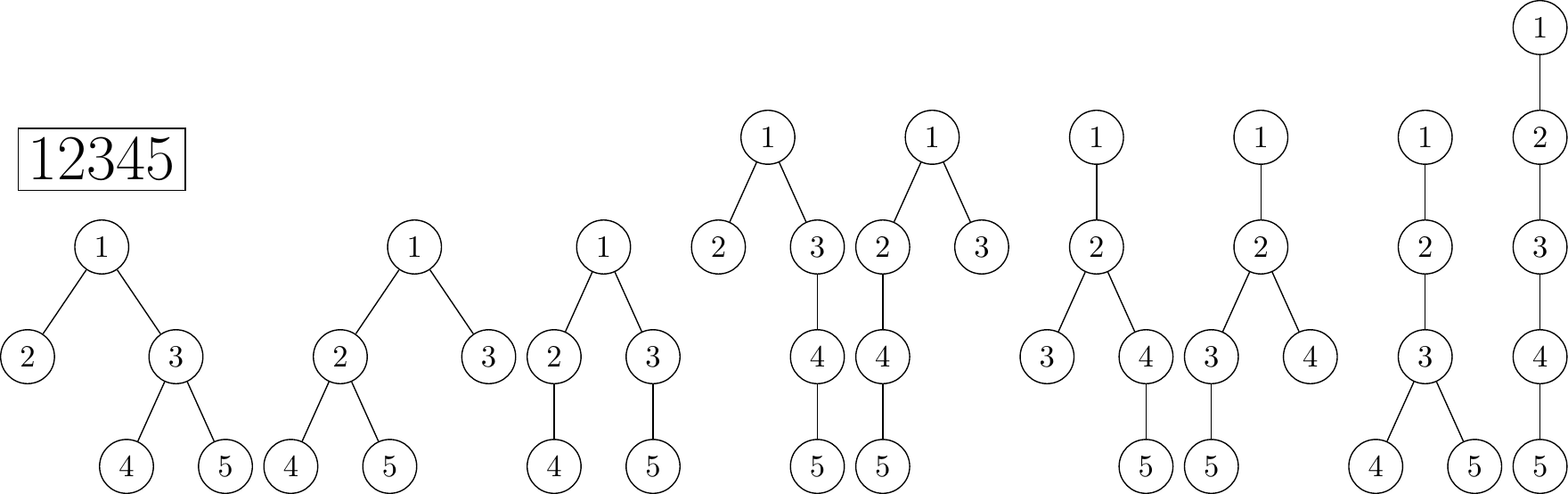}
	\vspace{.2cm}
		\includegraphics[scale=0.66]{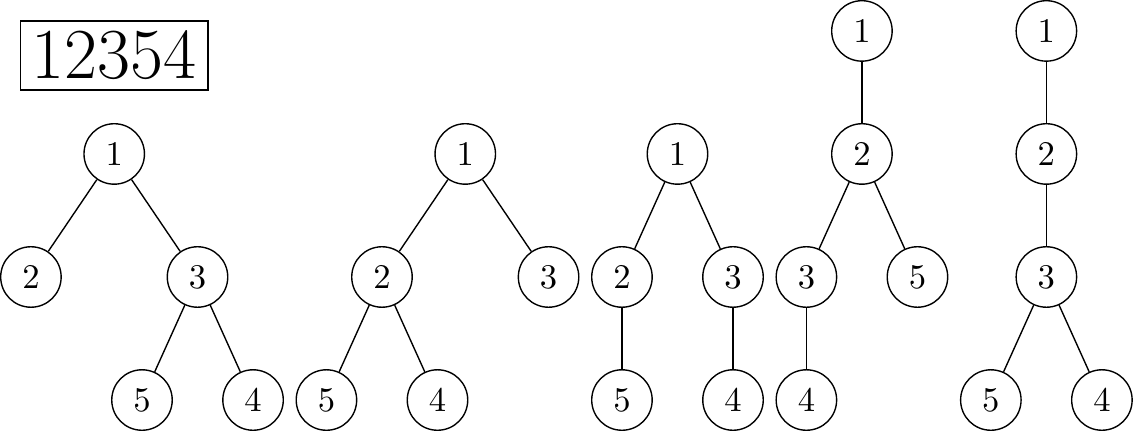}
	\vspace{.2cm}	
		\includegraphics[scale=0.66]{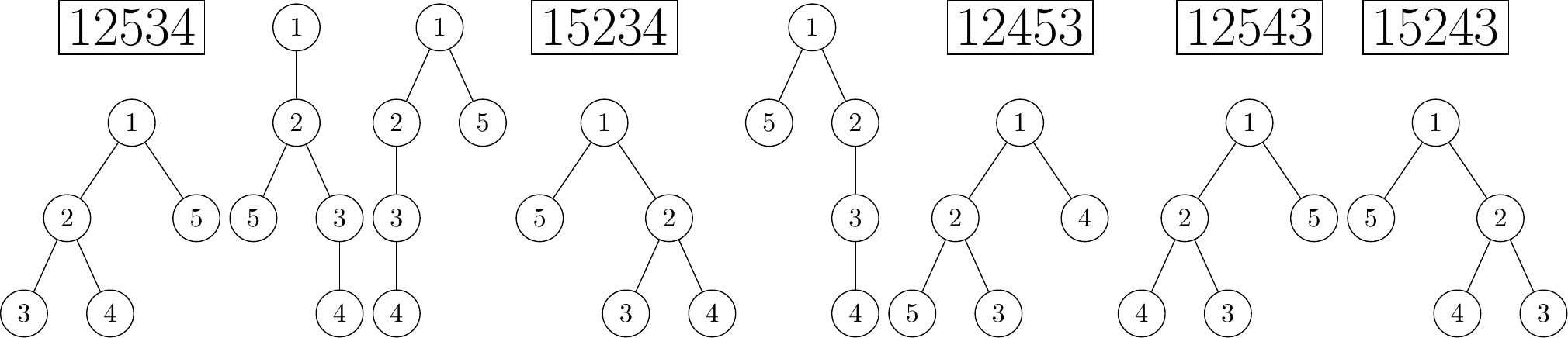}
	\end{center}	
\vspace{-5mm}
\noindent\rule{\textwidth}{1pt}
	\begin{center}
		\includegraphics[scale=0.66]{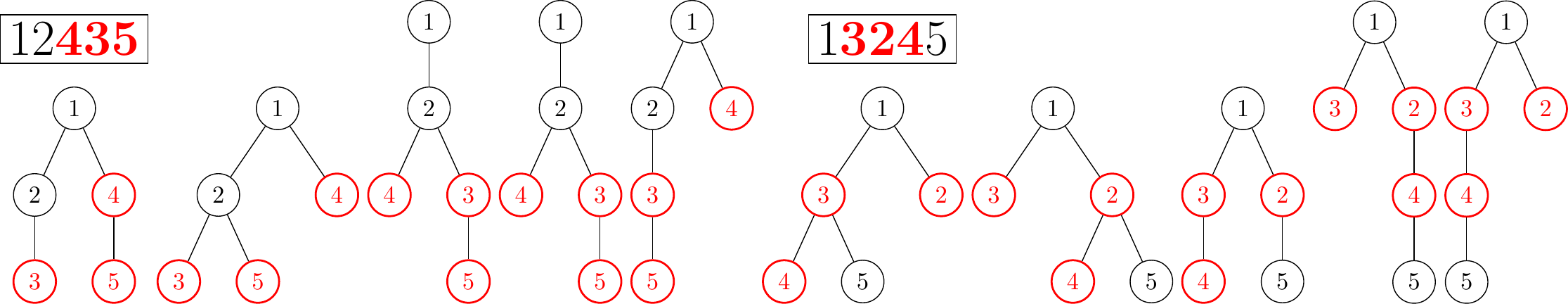}
	\vspace{.2cm}	
		\includegraphics[scale=0.66]{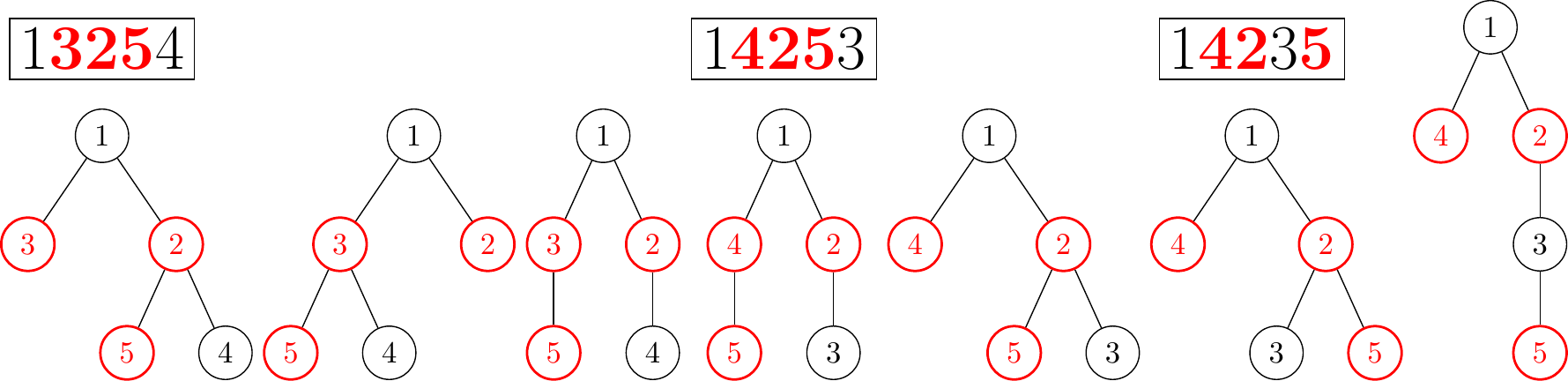}
	\end{center}
	\caption{All increasing unary-binary trees with 5 nodes, where patterns $213$ are marked in red. There are 22 trees (drawn in black) for which the associated permutation avoids this pattern.\label{trees}}
\end{figure}

Here is the reformulation of Riehl's conjecture which takes into account our findings.

\begin{conjecture} \label{conj:Riehl}
	The number of basketball walks of length $n$ starting at the origin and ending at altitude~$1$ that never touch or pass below the $x$-axis equals
 the number of increasing unary-binary trees on $n$ vertices with associated permutation avoiding $213$.
\end{conjecture}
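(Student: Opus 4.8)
The plan is to work entirely on the tree side and reduce the count of increasing unary-binary trees whose breadth-first reading $\sigma_T$ avoids $213$ to the generating function $G_{0,1}(z)$, whose coefficients are already known in several forms. I would use a generating-tree (insertion) model: build each increasing tree on $n$ vertices by inserting the labels $1,2,\dots,n$ in order, where at each step the current largest label is attached as a new leaf in one of the available \emph{slots} (a node of out-degree~$0$ contributes one slot, becoming unary; a node of out-degree~$1$ contributes two slots, the new child going left or right of the existing one; a node of out-degree~$2$ contributes none). The lever is the following observation, which I would establish first: since the newly inserted label is the global maximum, in any new occurrence of $213$ it can only play the role of the largest entry (the ``$3$''); hence inserting it at breadth-first position~$p$ preserves $213$-avoidance \emph{if and only if} the reading to the left of~$p$ is strictly increasing (equivalently, contains no inversion). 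This turns the conjecture into a controlled generating-tree problem.

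Concretely, I would first formalise $\sigma_T$ and the slot structure and prove the insertion criterion by a short induction: inserting a new maximum creates a $213$ exactly when a ``$21$'' already sits to its left, and creates no other new pattern, while relative orders of old entries are unchanged. Second, I would read off a succession rule: for a tree whose reading is increasing every slot is admissible, so the number of descendants equals the number of slots; for a tree whose reading first fails to increase at some frontier, only the slots that insert the new node weakly to the left of that frontier are admissible. Third, I would encode this rule by a functional equation in a catalytic variable that records the relevant statistic (the length of the maximal increasing prefix of $\sigma_T$ together with the number of admissible slots inside it), and attempt to solve it by the kernel method --- pleasingly, the very tool underlying $G_{0,1}(z)$. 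Finally, I would identify the resulting generating function with $G_{0,1}(z)$ by verifying that it satisfies the quartic~\eqref{eq:G01eqalg}, or equivalently the Catalan relation~\eqref{eq:LagrangeG01C}, with matching of the first coefficients $1,1,3,7,22,\dots$ pinning down the correct branch.

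The hard part will be the bookkeeping in the third step. Breadth-first order is fundamentally incompatible with the recursive subtree decomposition that makes most tree generating functions tractable, so the admissible-slot set depends \emph{globally} on the tree rather than on a local subtree; capturing it faithfully may force more than one catalytic parameter, and reducing the resulting kernel equation back to~\eqref{eq:G01eqalg} is where the genuine difficulty lies. As a cross-check, and a possible alternative route, I would exploit the cycle-lemma identity recorded after~\eqref{eq:G011}, namely that $n\,[z^n]G_{0,1}(z)$ equals the number of unconstrained $\{\pm1,\pm2\}$-walks from $0$ to~$1$ of length~$n$: a bijective proof could instead seek a natural $1$-to-$n$ correspondence between these trees and those unconstrained walks. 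Producing such a correspondence would be the most satisfying outcome, but --- much like the ``more direct simple bijection'' sought earlier in the paper for~\eqref{eq:LagrangeG01C} --- it is likely to be the real open problem.
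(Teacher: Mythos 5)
Your first step is sound: since a newly inserted label is the global maximum, it can only play the role of the ``$3$'' in a new occurrence of $213$, and insertion does not disturb the relative order of the old entries, so attaching the new leaf at breadth-first position $p$ preserves $213$-avoidance exactly when the prefix of $\sigma_T$ strictly to the left of $p$ is increasing. But from there the proposal stops being a proof and becomes a programme, and the gap you yourself flag in the third step is, in my view, fatal to this route as stated. After inserting the maximum at position $p$, the new maximal increasing prefix has length exactly $p$, and the set of admissible slots of the resulting tree is the set of slots whose induced breadth-first position lies weakly inside that prefix. That set depends on the out-degrees, the depths, and the left-to-right arrangement of \emph{all} nodes read so far --- a slot at a unary node even contributes two candidate positions differing by one --- so there is no evidence that a bounded number of catalytic parameters suffices, and no functional equation is ever written down, let alone reduced to~\eqref{eq:G01eqalg} or~\eqref{eq:LagrangeG01C}. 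Verifying the first coefficients $1,1,3,7,22,\dots$ cannot substitute for this missing step.

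For comparison: the paper does not prove the statement either --- it is left as Conjecture~\ref{conj:Riehl} and settled only by the subsequent bijective work of Bettinelli, Fusy, Mailler, and Randazzo cited there. The argument that does succeed on the tree side is quite different from yours: instead of tracking admissible insertion positions, one proves structural consequences of $213$-avoidance for the whole tree (every descent top in $\sigma_T$ is a leaf; the labels preceding $n$ increase; consecutive internal nodes in reading order carry consecutive labels), which pins the trees down locally enough to yield the algebraic relation $z\bigl(T+T^2\bigr)=\bigl(z+zT+zT^2\bigr)^2$, i.e.\ $C=z(1+C)^2$ for $C=T+T^2$, matching~\eqref{eq:LagrangeG01C}. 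If you want to salvage your approach, I would redirect the effort either toward such global structural lemmas or toward the $1$-to-$n$ cycle-lemma correspondence you mention at the end, which is indeed the more promising bijective target; the insertion/kernel route, as proposed, does not yet constitute a proof.
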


After the first version of this article was circulated via the {\tt arXiv}, Bettinelli, Fusy, Mailler, and Randazzo~\cite{BettinelliFusy16} 
found a nice bijective proof of this conjecture.

\medskip

How strong is the constraint of avoiding the pattern 213?
For this, we need to compute the probability that an increasing
unary-binary tree avoids the pattern 213. 
Due to Conjecture~\ref{conj:Riehl}, proved in~\cite{BettinelliFusy16}, we know the number 
of increasing unary-binary trees which avoid $213$. Hence,
the question is to compute the total number $t_n$ of increasing
unary-binary trees, which can be done via the so-called boxed product. 

The boxed product (written $\boxproduct$) 
is the combinatorial construction corresponding to a labelled product,
in which the minimal label is forced to be in the first component of
this product (see~\cite{FlSe09}). 
This leads the following recursive decomposition  for binary-ternary
increasing trees $\T$:
$$\T= leaf + root  \boxproduct \T +    root  \boxproduct \T \times \T \,,$$
which translates into the following functional equation for the
corresponding exponential generating function:
$$T(z)= z + \int_0^z T(t) dt + \int_0^z T^2(t) dt\,.$$
By solving the associated differential equation $T'(z) = 1+
T(z)+T^2(z)$,
we obtain
$$T(z)= \frac{\sqrt{3}}{2}  \tan\left(\frac{\pi}{6}+\frac{\sqrt{3}}{2}z \right) -\frac{1}{2}\,.$$
The corresponding Taylor expansion is 
$$T(z)=\sum_{n\geq 1} t_n \frac{z^n}{n!} = z +    \frac{z^2}{2!} + 3 \frac{z^3}{3!} +  9 \frac{z^4}{4!}+ 39 \frac{z^5}{5!}+189\frac{z^6}{6!}+  1107\frac{z^7}{7!}+O(z^8) \,.$$
Singularity analysis on the dominant poles of the $\tan$ function implies that 
$$t_n \sim 3 \sqrt{\frac{3}{2\pi}}   \left( \frac{3^{3/2}}{2 e \pi}\right)^n   \sqrt{n} \, n^n\,.$$

In conclusion, increasing unary-binary trees grow like $n^{1/2} A^n
n^n$, while the same trees avoiding the pattern 213 grow like $n^{-3/2}4^n$.
This observation suggests the following natural conjecture.

\begin{conjecture}[\sc A Stanley--Wilf-like conjecture for pattern avoidance in increasing trees]
Let $\T$ be a class of increasing trees of prescribed arity encoded by a power series $\phi$, i.e., one has  $\T'=z \phi(\T)$.
Then the number $a_n$ of such trees avoiding a given pattern satisfies $a_n=O(C^n)$, for some $C$ depending on the pattern and on $\phi$.
\end{conjecture}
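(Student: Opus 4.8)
The plan is to deduce the (exponential) upper bound from the Marcus--Tardos theorem (the former Stanley--Wilf conjecture), which guarantees that the number of permutations of $\{1,\dots,n\}$ avoiding a fixed pattern $\pi$ is at most $B_\pi^{\,n}$ for some constant $B_\pi$ depending only on $\pi$. The device linking the two worlds is exactly the map $T\mapsto\sigma_T$ which reads the labels of an increasing tree level by level, left to right, as used above. The idea is to control the fibres of this map and then import the permutation estimate wholesale.

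First I would observe that an increasing tree $T$ on $n$ vertices is determined by two pieces of data: its \emph{shape} (the underlying plane tree, keeping the left-to-right order and the arities but forgetting the labels) together with the associated permutation $\sigma_T$. Indeed, fixing the shape fixes the order in which its vertices are visited by the breadth-first traversal, so filling in the values $\sigma_T(1),\sigma_T(2),\dots$ in that order recovers the labelling. Conversely, for a fixed shape $Y$ and a fixed permutation $\sigma$ there is \emph{at most one} increasing tree realising them: one labels the vertices of $Y$ in breadth-first order by $\sigma$ and then merely checks whether the outcome is increasing. Consequently the map sending a tree with a prescribed associated permutation to its shape is injective, so, if $s_n$ denotes the number of admissible shapes on $n$ vertices (plane trees whose arities lie in the support $\A$ of $\phi$), the number of increasing trees with a \emph{fixed} associated permutation is at most $s_n$.

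Grouping the objects counted by $a_n$ according to their associated permutation, and using that every such permutation avoids $\pi$, now gives
\[
a_n=\sum_{\sigma\text{ avoids }\pi}\bigl|\{T:\sigma_T=\sigma\}\bigr|
\le s_n\cdot\bigl|\{\sigma:\sigma\text{ avoids }\pi\}\bigr|
\le s_n\,B_\pi^{\,n}.
\]
To finish, it suffices to bound $s_n$ exponentially. The admissible shapes form an ordinary combinatorial class of plane trees with arities in $\A$, so their generating function $Y(z)$ satisfies an algebraic equation of the type $Y=z\,\widetilde\phi(Y)$ encoding these arities; being algebraic and non-degenerate it has a strictly positive radius of convergence $\rho$, whence $s_n\le A^n$ for some $A$ depending only on $\phi$ (for instance $A=4$ when all arities are allowed, where the shapes are plane trees counted by Catalan numbers). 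Combining the two estimates yields $a_n\le (A\,B_\pi)^{\,n}=O(C^{\,n})$ with $C=A\,B_\pi$, which depends on the pattern through $B_\pi$ and on $\phi$ through $A$, exactly as asserted.

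The heart of the argument, and its only genuinely delicate point, is the fibre bound: one must be certain that the tree-to-permutation map is many-to-one in the favourable direction, i.e.\ that a fixed target permutation has only exponentially many increasing-tree preimages, which is what the shape count $s_n$ provides. Once this is in place, the deep content --- that pattern-avoiding permutations are exponentially sparse --- is supplied entirely by Marcus--Tardos, and no analysis specific to the tree pattern is required. The remaining care is merely to ensure, for classes $\phi$ with infinitely many admissible arities, that the shape generating function still has positive radius of convergence, which holds under the mild assumption that the arity weights do not grow too fast.
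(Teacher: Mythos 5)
The first thing to note is that the paper does not prove this statement: it is explicitly left as an open conjecture (the authors only contrast the growth $n^{1/2}A^n n^n$ of all increasing unary-binary trees with the growth $n^{-3/2}4^n$ of the $213$-avoiding ones). So there is no proof of the paper's to compare yours against; your proposal has to stand on its own.

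On its own terms your argument looks correct, and it is pleasingly short. The key points check out: (i) the map $T\mapsto(\mathrm{shape}(T),\sigma_T)$ is injective, since the shape fixes the breadth-first order of the vertex positions and $\sigma_T$ then fixes the labels, so each $\pi$-avoiding permutation has at most $s_n$ preimages; (ii) Marcus--Tardos bounds the number of $\pi$-avoiding permutations of length $n$ by $B_\pi^n$; (iii) $s_n$ is at most the Catalan number $\frac{1}{n}\binom{2n-2}{n-1}\le 4^n$ when $\phi$ has $0/1$ coefficients, and remains exponentially bounded whenever $\phi$ has positive radius of convergence, because the arities of a tree on $n$ vertices sum to $n-1$. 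The one point you should make explicit, rather than wave at in your final sentence, is that positivity of the radius of convergence of $\phi$ is not a technical convenience but a necessary hypothesis: if $\phi_a$ grows superexponentially in $a$, the weighted star on $n$ vertices alone contributes about $\phi_{n-1}\cdot 4^{n}$ trees whose associated permutation avoids $\pi$, so the conjecture as literally stated is false for such $\phi$, and your theorem must carry that hypothesis. The only other caveat is interpretive: your reduction to Marcus--Tardos applies to classical permutation patterns read off the level-order traversal, which is the setting of Riehl's question motivating the conjecture; if a broader notion of ``pattern'' (vincular patterns, a different reading of the tree, or genuinely tree-shaped patterns) were intended, the fibre argument survives but the permutation-side input would need to be replaced. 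Under the natural reading, what you have is a proof of the conjecture, not merely a proposal, and it deserves to be written up carefully.
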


This conjecture shares the spirit of the Stanley--Wilf conjecture
(proven by a combination of \cite{KlazAA} and~\cite{MarcusTardos04}), 
which asserted
that any class of pattern-avoiding permutations has an exponential growth rate.  

\subsection{Boolean trees and basketball walks from 0 to 2}
In~\cite{bender91}, Bender and Williamson considered the problem of
bracketing some binary operations (objects that are in bijection with the Boolean trees that we present in Figure~\ref{booleantrees}).
It turns out that this problem is doubly related to our basketball
walks (walks with steps $\pm 1, \pm 2$, always positive). 
This is what we address in the next two propositions.
\begin{figure}[H] 
		\includegraphics[scale=0.88]{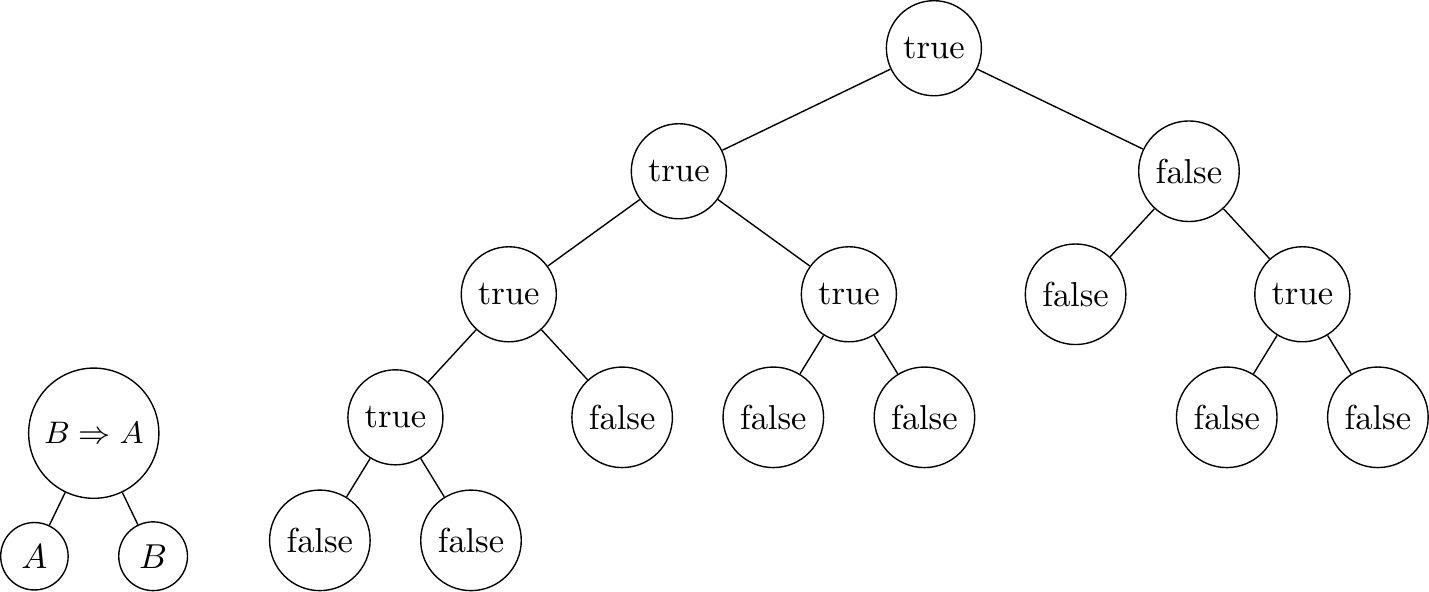}
		\includegraphics[scale=0.88]{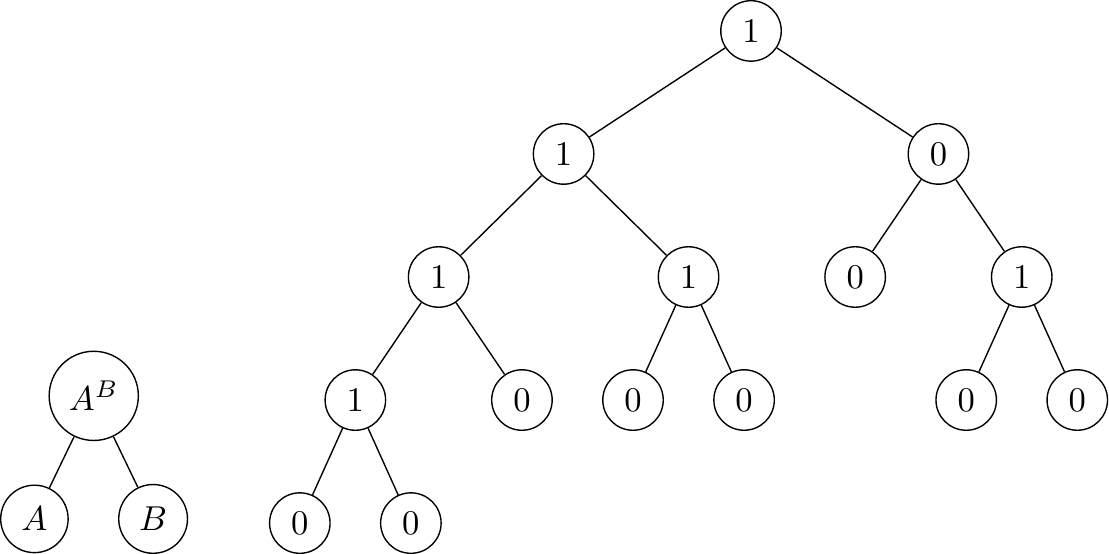}
\caption{Boolean trees (i.e., binary trees where each node is labelled either ``false'' or ``true'') such that a node having children with Boolean value $A$ and $B$ will have the Boolean value ``$B \Rightarrow A$''.
}\label{booleantrees}
\end{figure}

\begin{proposition} 	
Under the conventions $1^1=1^0=0^0=1$ and $0^1=0$,	
the number of bracketings of $n+1$ zeroes $0$\^{}$\cdots$\^{}$0$ giving result $1$
is equal to the number of basketball walks from altitude~$0$ to 
altitude~$2$ of length $n$. 
\end{proposition}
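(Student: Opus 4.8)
The plan is to translate the Boolean bracketings into a generating-function identity and to match it against the closed form~\eqref{eq:G02analytic} for $G_{0,2}(z)$. A bracketing of $n+1$ zeroes is a binary tree with $n+1$ leaves (all equal to $0$) and $n$ internal nodes, each internal node combining its left value $A$ and right value $B$ into $A^{B}=(B\Rightarrow A)$. The single combinatorial fact I need is that, under the stated conventions, $A^{B}=0$ precisely when $A=0$ and $B=1$; equivalently, a node evaluates to $1$ in every case \emph{except} when its left child evaluates to $0$ and its right child to $1$. I will mark the number $n$ of internal nodes (which is exactly the walk length) by $z$, and let $T(z)=\sum_n t_n z^n$ and $F(z)=\sum_n f_n z^n$ be the generating functions for bracketings evaluating to $1$, respectively to $0$. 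A single leaf ($n=0$) evaluates to $0$, so $T(z)=0+\cdots$ and $F(z)=1+\cdots$, while $T(z)+F(z)=C(z)$, where $C(z)=1+zC(z)^2$ is the Catalan generating function counting all bracketings.

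Next I would write down the equations coming from the root decomposition. Since the root evaluates to $0$ exactly when its left subtree evaluates to $0$ and its right subtree to $1$, the false trees are either a single leaf or decompose as (false left)\,$\cdot$\,(node)\,$\cdot$\,(true right), giving
\begin{equation*}
F(z)=1+z\,F(z)\,T(z),
\end{equation*}
while every other tree is true, i.e.\ $T(z)=C(z)-F(z)$. Solving $F=1/(1-zT)$ and substituting into $T=C-F$ yields, after clearing denominators and using $C-1=zC^2$, the quadratic
\begin{equation*}
z\,T^2-(1+zC)\,T+zC^2=0 .
\end{equation*}

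Finally, I would solve this quadratic and simplify the radicals. Taking the branch with $T(0)=0$ gives
\begin{equation*}
T(z)=\frac{(1+zC)-\sqrt{(1+zC)^2-4z^2C^2}}{2z},
\end{equation*}
and the discriminant simplifies by means of $z^2C^2=z(C-1)=zC-z$ to $(1+zC)^2-4z^2C^2=1+2zC-3z^2C^2=1-zC+3z$. Substituting the explicit value $zC(z)=\tfrac12\bigl(1-\sqrt{1-4z}\bigr)$ turns $1+zC$ into $\tfrac12\bigl(3-\sqrt{1-4z}\bigr)$ and the discriminant into $\tfrac14\bigl(2+12z+2\sqrt{1-4z}\bigr)$, so that
\begin{equation*}
T(z)=\frac{3-\sqrt{1-4z}-\sqrt{2+12z+2\sqrt{1-4z}}}{4z},
\end{equation*}
which is exactly the closed form~\eqref{eq:G02analytic} for $G_{0,2}(z)$. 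Hence $T(z)=G_{0,2}(z)$, and comparing coefficients of $z^n$ proves that the number of bracketings of $n+1$ zeroes evaluating to $1$ equals the number of basketball walks from altitude~$0$ to altitude~$2$ of length $n$. The one delicate point is the correct choice of branch in the two nested square roots; I would pin it down by checking that the resulting series satisfies $T(0)=0$ and matches the first values $t_1=1$, $t_2=1$, $t_3=4$ against $[z^n]G_{0,2}(z)$, which also serves as a sanity check on the root decomposition.
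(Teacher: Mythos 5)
Your proof is correct and follows essentially the same route as the paper's: a root decomposition of the ``false'' bracketings ($F=1+zFT$ versus the paper's $Z=z+ZW$), the observation that true plus false gives the Catalan series, and an explicit solution of the resulting quadratic matched against the closed form~\eqref{eq:G02analytic}. The only difference is cosmetic --- you index by internal nodes so that $T(z)=G_{0,2}(z)$ directly, while the paper indexes by leaves and obtains $W(z)=zG_{0,2}(z)$.
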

\begin{proof}
Let $W(z)$ {\em(}respectively $Z(z)${\em)} be the generating function 
for the number of bracketings of $n$ zeroes 0\^{}0\^{}$\cdots$\^{}0 
producing result $1$ (respectively $0$).
The objects that are counted by $W(z)$ are of the form
(``1'')\^{}(``1''), (``1'')\^{}(``0''), or (``0'')\^{}(``0''),
where ``1'' stands for a bracketing producing the result~1, and
``0'' stands for a bracketing producing the result~0.
This observation translates into the generating function equation
	\begin{equation}
		W(z)=W^2(z)+Z(z)W(z)+Z^2(z). \label{eq:W}
	\end{equation}
	Similarly, a bracketing producing $0$ may either be a single $0$ or a
bracketing of the form (``0'')\^{}(``1''). This yields the equation
	\begin{equation}
		Z(z)=z+Z(z)W(z). \label{eq:Z}
	\end{equation}
	Let $C(z):=Z(z)+W(z)$. Equations~\eqref{eq:W} and~\eqref{eq:Z} imply 
$	C(z)=1+zC^2(z)$,	i.e.,
	$
		C(z)=\frac{1}{2z}-\frac{1}{2z}\sqrt{1-4z}. 
	$
	This is not a surprise because $W+Z$ corresponds to well parenthesized words, known to be counted by Catalan numbers.

We may ``replace'' $W(z)$ by $C(z)$ in Equation~\eqref{eq:Z}.
This leads to
	\begin{equation}
	Z(z)=z+Z(z)(C(z)-Z(z)).
	\end{equation}
Solving for $Z(z)$, we obtain
	\begin{align}\label{eq:ZZ}
	\begin{aligned}
	Z(z) &=\frac{C(z)-1+\sqrt{(C(z)-1)^2+4z}}{2} \\
	     &=-\frac{1}{4}-\frac{1}{4}\sqrt{1-4z}+\frac{1}{4}\sqrt{2+12z+2\sqrt{1-4z}}.
	\end{aligned}
	\end{align}
	Therefore, we get
	\begin{equation}
	W(z)=C(z)-Z(z)
	=\frac{3}{4}-\frac{1}{4}\sqrt{1-4 z}-\frac{1}{4}\sqrt{2 + 12 z + 2\sqrt{1-4 z}}.
	\end{equation}
		Comparison of this expression with 
Expression~\eqref{eq:G02analytic} for $G_{0,2}(z)$ 
		shows that $W(z)=zG_{0,2}(z)$.
\end{proof}
We leave it to the reader to find a bijective proof between bracketings of
0\^{}\dots\^{}0 having value~1 and basketball walks from altitude~0 to
altitude~2.

\begin{proposition}
	The number of basketball walks of length $n$ starting at the
        origin, ending at altitude~$1$, never running below the $x$-axis in-between,  is equal to the number of bracketings of $n+2$ zeroes $0$\^{}$0$\^{}$\cdots$\^{}$0$ producing result~$0$.
\end{proposition}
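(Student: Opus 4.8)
The plan is to recast the statement as an identity of generating functions and then reduce it to a short symmetric-function computation in the small roots $u_1,u_2$ of the kernel equation. Keeping the conventions of the previous proposition (so that $z$ marks the number of zeroes), write $Z(z)=\sum_m z_m z^m$, where $z_m$ is the number of bracketings of $m$ zeroes evaluating to~$0$; the assertion is that $z_{n+2}$ equals the number $m_n$ of basketball \emph{meanders} of length~$n$ from $0$ to~$1$, i.e.\ walks with steps in $\{-2,-1,1,2\}$ that stay $\ge0$ (and are allowed to touch the $x$-axis). Setting $M_1(z):=\sum_{n\ge0}m_nz^n$, I would first show that the whole statement is equivalent to the single generating-function identity
\begin{equation*}
Z(z)=z+z^2M_1(z).
\end{equation*}
Extraction of $[z^{n+2}]$ then yields $z_{n+2}=m_n$ for all $n\ge0$; the isolated linear term~$z$ accounts exactly for the single-zero bracketing (the case $n=-1$, outside the range), and it forces no exceptional case, since $n+2\ge2$ (as a sanity check, $0\wedge 0$ evaluates to $0^0=1$, so $z_2=0=m_0$).

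Next I would express $M_1$ through the small roots. Decomposing a meander from $0$ to~$1$ at its \emph{last} visit to altitude~$0$ writes it as an excursion followed by a strictly positive walk from $0$ to~$1$, so that $M_1=E\cdot G_{0,1}$; using $E(z)=-u_1u_2/z$ and $G_{0,1}(z)=u_1+u_2$ from~\eqref{eq:GGGG01} gives
\begin{equation*}
M_1(z)=-\frac{u_1u_2(u_1+u_2)}{z}.
\end{equation*}
(The same expression also follows by applying the kernel method directly to the functional equation for basketball meanders and solving for the altitude-$0$ and altitude-$1$ sections.) On the other side, the previous proposition supplies $Z=z+ZW$ with $W=zG_{0,2}$, whence $Z=z/(1-zG_{0,2})$. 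Writing $s:=u_1+u_2$ and $p:=u_1u_2$, so that $G_{0,1}=s$, $G_{0,2}=s^2-p$ by~\eqref{eq:GG02}, and $M_1=-sp/z$, the target identity $Z=z+z^2M_1=z(1-sp)$ becomes the purely algebraic claim
\begin{equation*}
(1-sp)\bigl(1-z(s^2-p)\bigr)=1.
\end{equation*}

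It then remains to verify this last identity, which is the only genuinely non-routine step. I would exploit the palindromic structure of the kernel: multiplying $1-zP(u)=0$ by $u^2$ gives $zu^4+zu^3-u^2+zu+z=0$, a reciprocal quartic whose four roots are $u_1,u_2,1/u_1,1/u_2$. Factoring it as $\tfrac{z}{p}(u^2-su+p)(pu^2-su+1)$ and comparing with $zu^4+zu^3-u^2+zu+z$, the coefficients of $u^3$ and $u^2$ give the two relations
\begin{equation*}
s+p+sp=0 \qquad\text{and}\qquad z(1+s^2+p^2)=-p.
\end{equation*}
The first turns $1-sp$ into $1+s+p$, while the second rewrites $1-z(s^2-p)$ with denominator $1+s^2+p^2=(1+s+s^2)^2/(1+s)^2$; substituting both into $(1-sp)\bigl(1-z(s^2-p)\bigr)$ makes everything cancel to~$1$, closing the argument.

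The main obstacle I anticipate is exactly this concluding symmetric-function identity: realizing that the whole proposition hinges on the relations between $s=u_1+u_2$ and $p=u_1u_2$ that are forced by the reciprocal kernel polynomial, and carrying out the cancellation. By contrast, the combinatorial input (the last-passage decomposition $M_1=E\,G_{0,1}$), the reduction to a single generating-function identity, and the coefficient bookkeeping for the low-order terms are all routine.
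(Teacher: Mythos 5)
Your proof is correct, but it takes a genuinely different route from the paper's. The paper identifies the generating function $F_1(z)$ of these walks (which are meanders from $0$ to $1$) with $G_{1,2}(z)=\bigl(u_1u_2+u_1+u_2\bigr)/z$ by shifting everything up one unit, and then asserts the identity with $Z(z)$ by substituting the explicit radical expressions for the small roots. You instead use the last-passage decomposition $M_1=E\cdot G_{0,1}$ to obtain $M_1=-u_1u_2(u_1+u_2)/z$, and reduce the whole statement to the two relations $s+p+sp=0$ and $z(1+s^2+p^2)=-p$ read off from the reciprocal kernel quartic. The two starting expressions are reconciled exactly by your first relation (since $p+s=-sp$), and your concluding identity $(1-sp)\bigl(1-z(s^2-p)\bigr)=1$ does check out: modulo $s+p+sp=0$ one has $1-z(s^2-p)=(1+s+p)/(1+s^2+p^2)$ and $(1+s+p)^2=1+s^2+p^2$, so the product collapses to $1$. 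Your route buys two things. First, it avoids all manipulation of nested radicals; everything is elementary symmetric-function algebra in $s$ and $p$. Second, your low-order bookkeeping is the accurate one: the correct identity is $Z(z)=z+z^2M_1(z)$, as you state, whereas the paper's displayed claim $z^2F_1(z)=Z(z)$ is off by the isolated term $z$ coming from the single-zero bracketing (one checks $[z^1]Z=1$ while $[z^1]z^2F_1=0$). This discrepancy is harmless for the proposition itself, since only the coefficients $[z^{n+2}]Z$ with $n\ge0$ are used, but it is worth having it stated correctly.
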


\begin{proof}
	The generating function $F_1(z)$ for walks ending at $1$ is given by~\eqref{eq:G2} in the form
	\begin{equation}
		F_1(z) =G_{1,2}(z) = \frac{u_1(z)u_2(z)+u_1(z)+u_2(z)}{z}.
	\end{equation}
	The generating function $Z(z)$ for the number of bracketings of $n$ zeroes 0\^{}$\cdots$\^{}0 having value $0$ is given by \eqref{eq:ZZ}.
	Substitution of the closed-form expressions for the small roots
	into $F_1(z)$ yields $z^2F_1(z)=Z(z)$. 
This establishes the claim.
\end{proof}

\section{Conclusion}

In this article, we show how to derive closed-form expressions for the
enumeration of lattice walks satisfying various constraints (starting
point, ending point, positivity, allowed steps, \dots).
The key is a proper use of the Lagrange--B\"urmann inversion in
combination with
the expressions given by the kernel method. 
This technique admits many extensions, which will work in a similar way: it
is possible to extend it to walks in which we want to keep track of some
parameters (marking a specific step, pattern, altitude, \dots), 
allowing an infinite set of steps, or unbounded steps 
(this would encode what is called
catastrophes in queuing theory language). It is also possible to consider
other constraints, such as to force the walk to live in some cone or
to have some forbidden patterns. In all these cases, the kernel method
will give a closed-form expression for the generating function, in
terms of the roots of the kernel, and thus, our mix of kernel method
and Lagrange--B\"urmann inversion
 will lead in these situations also to some closed-form expression for
 the coefficients of the generating function (in terms of nested sums
 of binomials).  

In several cases, these nested sums of binomials provide the nice
challenge of finding bijective proofs. 
It is satisfying to find {\it some} formula for the enumeration of
certain lattice paths which is efficient (in terms of
algorithmic complexity), 
but the fact that many of these sums involve only positive terms
is an indication that combinatorics has still its word to say on these
formulas. 

The holonomic approach, as well illustrated by the book of Petkov{\v s}ek, Wilf, and Zeilberger~\cite{pewz96}, or Kauers and Paule~\cite{KauersPaule11},
is a way to prove that different binomial expressions correspond in fact to the same sequence. 
It remains an open question to know which methods can lead to the most concise formula:
the platypus algorithms and the Flajolet--Soria formula~\cite{BaFl02,BanderierDrmota15},
or the cycle lemma, and extraction of diagonals of rational functions  
seem to indicate that we could in fact need an arbitrarily large amount of
nested sums. 
In some cases, one can reduce the number of nested sums with
techniques from symbolic summation theory (e.g., by $\Sigma \Pi$
extension theory~\cite{Schneider07},  
or geometric simplifications in diagonal extractions of rational functions~\cite{BostanLairezSalvy15}),
but it is still unknown if, for the directed lattice path models we considered,  there is a miraculous simple formula (with just one or two nested sums). 

\bigskip
{\em Acknowledgments:} We thank the organizers of the 
8th International Conference on Lattice Path Combinatorics \& Applications,
which provided the opportunity for this collaboration.
Sri Gopal Mohanty played an important role in the birth of this
sequence of conferences, and his book~\cite{Mohanty79} was the first
one (together with the book of his Ph.D. advisor
Tadepalli Venkata Narayana~\cite{Narayana79}) to spur strong interest
in lattice path enumeration.    
We are therefore pleased to dedicate our article to him.

\bibliographystyle{plain}
\bibliography{basket}
\end{document}